\newtheorem{Theorem}{Theorem}[section]      
\newtheorem{Proposition}[Theorem]{Proposition}    
\newtheorem{Lemma}[Theorem]{Lemma}            
\theoremstyle{definition}  
\newtheorem{Remark}{Remark}
\newcounter{mnotecount}[section]
\newcommand{\rmnote}[1]{}
\title[Extremal domains for the first eigenvalue]{Extremal domains for the first eigenvalue in a general compact Riemannian manifold}
\author[E. Delay]{Erwann Delay}
\address{Erwann Delay,
Laboratoire d'analyse non lin\'eaire et g\'eom\'etrie, Facult\'e
des Sciences, 33 rue Louis Pasteur, 84000 Avignon, France}
\email{Erwann.Delay@univ-avignon.fr}
\urladdr{http://www.univ-avignon.fr/en/research/annuaire-chercheurs/\newline
$\mbox{ }$\hfill membrestruc/personnel/delay-erwann-1.html}
\author[P. Sicbaldi]{Pieralberto Sicbaldi}
\address{Pieralberto Sicbaldi, Universit\'e d'Aix-Marseille, Laboratoire d'Analyse Topologie Probabilit\'e, 39 rue Joliot-Curie, 13453 Marseille Cedex 13, France}
\email{pieralberto.sicbaldi@univ-cezanne.fr}
\urladdr{http://www.latp.univ-mrs.fr/~sicbaldi}
\begin{document}

\maketitle

{\bf Abstract.} We prove the existence of extremal domains with small prescribed volume for the first eigenvalue of the Laplace-Beltrami operator in any compact Riemannian manifold. This result generalizes a results of F. Pacard and the second author where the existence of a nondegenerate critical point of the scalar curvature of the Riemannian manifold was required. 

\tableofcontents

\section{Introduction and statement of the result}

Let $(M,g)$ be an $n$-dimensional Riemannian manifold, $\Omega$ a connected and open domain in $M$ with smooth boundary, and $\lambda_\Omega>0$ the first eigenvalue of the Laplace-Beltrami operator $-\Delta_{g}$ in $\Omega$ with $0$ Dirichlet boundary condition. The domain $\Omega$ is said to be {\it extremal} (for the first eigenvalue of the Laplace-Beltrami operator under 0 Dirichlet boundary condition) if it is a critical point for the functional $\Omega \longmapsto \lambda_\Omega$ in the class of domains with the same volume. 

\medskip

An extremal domain is characterized by the fact that the first eigenfunction of the Laplace-Beltrami operator with 0 Dirichlet boundary condition has constant Neumann data at the boundary. This result has been proved in the Euclidean space by P.R. Garabedian and M. Schiffer in 1953 \cite{Gar-Schif}, and in a general Riemannian manifold by A. El Soufi and S. Ilias in 2007 \cite{ElS-Ilias}. Extremal domains are then domains where the elliptic overdetermined problem
\begin{equation}\label{systeme}
\left\{
\begin{array}{rclll}
	\displaystyle \Delta_g u + \lambda  \, u & = & 0 & \textnormal{in} & \Omega \\
	\displaystyle u & > & 0 & \textnormal{in} & \Omega \\
	\displaystyle u & = & 0 & \textnormal{on} & \partial \Omega \\
\displaystyle g(\nabla u, \nu) & = & \textnormal{constant} & \textnormal{on} & \partial\Omega  \, 
\end{array}
\right.
\end{equation}
can be solved for some positive constant $\lambda$, where $\nu$ denotes the outward unit normal vector about $\partial \Omega$ for the metric $g$.

\medskip

In $\mathbb{R}^n$ the only extremal domains are balls. This is a consequence of a very well known result of J. Serrin: if there exists a solution $u$ to the overdetermined elliptic problem 
\begin{equation}\label{serrin}
\left\{
\begin{array}{rclll}
	\displaystyle \Delta u + f(u) & = & 0 & \textnormal{in} & \Omega \\
	\displaystyle u & > & 0 & \textnormal{in} & \Omega \\
	\displaystyle u & = & 0 & \textnormal{on} & \partial \Omega \\
\displaystyle \langle \nabla u, \nu \rangle & = & \textnormal{constant} & \textnormal{on} & \partial\Omega  \, ,
\end{array}
\right.
\end{equation}
for a given bounded domain $\Omega \subset \mathbb{R}^n$ and a given Lipschitz function $f$, where $\nu$ denotes the outward unit normal vector about $\partial \Omega$ and $\langle \cdot, \cdot \rangle$ the scalar product in $\mathbb{R}^n$, then $\Omega$ must be a ball, \cite{Serrin}. In the Euclidean space, round balls are in fact not only extremal domains, but also minimizers for the first eigenvalue of the Laplacian with 0 Dirichlet boundary condition in the class of domains with the same volume. This follows from the Faber--Kr\"ahn inequality, 
\begin{equation} \label{e:faber-krahn}
\lambda_\Omega \geq \lambda_{B^n(\Omega)} 
\end{equation}
where $B^n(\Omega)$ is a ball of $\mathbb{R}^n$ with the same volume 
as $\Omega$, because equality holds in~\eqref{e:faber-krahn} if and only if 
$\Omega = B^n(\Omega)$, see~\cite{Faber} and~\cite{Krahn}. 

\medskip

Nevertheless, very few results are known about extremal domains in a Riemannian manifold. The result of J. Serrin, based on the moving plane argument introduced by A. D. Alexandrof in \cite{Alex}, uses strongly the symmetry of the Euclidean space, and naturally it fails in other geometries.
The classification of extremal domains is then achieved in the Euclidean space, but it is completely open in a general Riemannian manifold.

\medskip

For small volumes, a method to build new examples of extremal domains in some Riemannian manifolds has been developed in \cite{Pac-Sic} by F. Pacard and P. Sicbaldi. They proved that when the Riemannian manifold has a nondegenerate critical point of the scalar curvature, then it is possible to build extremal domains of any given volume small enough, and such domains are close to geodesic balls centered at the nondegenerate critical point of the scalar curvature. The method fails if the Riemannian method does not have a nondegenerate critical point of the scalar curvature.
\medskip

In this paper we improve the result of F. Pacard and P. Sicbaldi by eliminating the hypothesis of the existence of a nondegenerate critical point for the scalar curvature. In particular, we are able to build extremal domains of small volume in every compact Riemannian manifold.

\medskip

For $\epsilon >0$, we denote by $B^g_{\epsilon}(p) \subset M$ the geodesic ball of center $p \in M$ and radius $\epsilon$. We denote by  $B_\epsilon \subset \mathbb R^n$ the Euclidean ball of radius $\epsilon$ centered at the origin. The main result of the paper is the following:
\begin{Theorem}\label{maintheorem} \label{th:1}
Let $M$ be a Riemannian manifold of dimension $n \geq 2$. There exist $\epsilon_0 > 0$ and a smooth function 
$$
\Phi:M\times(0,\epsilon_0)\longrightarrow \mathbb{R}
$$
such that:
\begin{itemize}
\item[(1)] For all $\epsilon\in (0,\epsilon_0)$, if $p$ is a critical point of the function $\Phi(\cdot,\epsilon)$
then there exists an extremal domain $\Omega_\epsilon \subset M$ whose volume is equal to the Euclidean volume of $B_\epsilon$. Moreover, there exists $c >0$ and, for all $\epsilon \in (0, \epsilon_0)$, the boundary of $\Omega_\epsilon$ is a normal graph over $\partial B^g_\epsilon (p)$ for some function $v(p,\epsilon)$ with
\[
\|Êv(p,\epsilon) \|_{\mathcal C^{2, \alpha}(\partial B^g_\epsilon (p))} \leq c \, \epsilon^3 \, .
\]
\item[(2)] There exists a function $\bf r$ defined on M that can be written as
\[
{\bf r} = K_1\, \|Riem\|^2+K_2\, \|Ric\|^2+K_3\, R^2+K_4\, \Delta_g R
\]
where $Riem$, $Ric$, $R$ denote respectively the Riemann curvature tensor, the Ricci curvature tensor and the scalar curvature of $(M,g)$, and $K_1, K_2, K_3$ and $K_4$ are constants depending only on $n$, such that for all $k\geq0$ 
$$
\|\Phi(p,\epsilon)-R_p-\epsilon^2\, {\bf r}_p\|_{C^k(M)}\leq c_k\, \epsilon^3 \, 
$$
for some constant $c_k>0$ which does not depend on $\epsilon\in(0,\epsilon_0)$ (the subscript $p$ means that we evaluate the function at $p$).
\item[(3)] The following expansion holds:
$$
\begin{array}{lll}
\displaystyle \lambda_{\Omega_\epsilon} & = & \displaystyle \lambda_1\, \epsilon^{-2}-\frac{n(n+2)+2\lambda_1}{6n(n+2)}\, \Phi(p,\epsilon)\\[4mm]
& = & \displaystyle \lambda_1\, \epsilon^{-2} 
-\frac{n(n+2)+2\lambda_1}{6n(n+2)}\, \left(R_p+\epsilon^2\, {\bf r}_p\right)+\mathcal{O}(\epsilon^3) 
\end{array}
$$
where $\lambda_1$ is the first Dirichlet eigenvalue of the unit Euclidean ball.
\end{itemize}
\end{Theorem}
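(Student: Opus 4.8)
The plan is a Lyapunov--Schmidt (finite-dimensional) reduction, extending the scheme of \cite{Pac-Sic} one order further in $\epsilon$. I would first rescale: in geodesic normal coordinates centered at $p$, put $y=x/\epsilon$, so that $B^g_\epsilon(p)$ becomes, in the $y$-chart, a domain close to the unit Euclidean ball $B_1$ carrying the metric $\bar g_\epsilon=\epsilon^{-2}g$, which is Euclidean up to terms of size $\epsilon^2$ built from the curvature tensor of $g$ at $p$. For $p\in M$ and a function $v$ on $S^{n-1}$, I consider the domain $\Omega_\epsilon(p,v)$ whose rescaled boundary is the normal graph $\{(1+v(\theta))\,\theta:\theta\in S^{n-1}\}$, with $v$ in a small ball of $\mathcal C^{2,\alpha}(S^{n-1})$ and subject to the volume constraint $\mathrm{Vol}_g(\Omega_\epsilon(p,v))=\mathrm{Vol}(B_\epsilon)$, which slaves the spherical mean of $v$ to its higher Fourier modes. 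On each such domain one solves the Dirichlet eigenvalue problem $\Delta_g u+\lambda u=0$, $u>0$, $u|_{\partial\Omega}=0$, normalized by $\int_\Omega u^2\,dv_g=1$, obtaining $\lambda=\lambda(p,v,\epsilon)$ and $u=u(p,v,\epsilon)$, smooth in $(p,v,\epsilon)$ by the implicit function theorem and elliptic regularity. By \eqref{systeme}, extremal domains are exactly the zeros of the nonlinear operator
\[
F(p,v,\epsilon)\;:=\;g(\nabla u,\nu)\big|_{\partial\Omega_\epsilon(p,v)}-\frac{1}{|\partial\Omega_\epsilon(p,v)|}\int_{\partial\Omega_\epsilon(p,v)}g(\nabla u,\nu)\,d\sigma_g\,,
\]
viewed as a map between spaces of mean-zero H\"older functions on $S^{n-1}$.

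\textbf{The linearized operator.} The core object is $L:=D_vF(p,0,0)$, the linearization at the Euclidean unit ball. Writing the first Dirichlet eigenfunction of $B_1$ as the radial profile $\varphi_1(|y|)$ with $\varphi_1(1)=0$, a direct computation (differentiating the eigenvalue problem and the boundary condition, using Hadamard's formula, which forces the eigenvalue variation to vanish on mean-zero perturbations) shows that $L$ is diagonal in the basis of spherical harmonics and acts on the degree-$k$ component by an explicit multiplier $\sigma_k$ obtained from $\varphi_1$, $\varphi_1'(1)$ and the solution of the associated radial ODE with Dirichlet data. The essential point is that $\sigma_1=0$: the degree-one harmonics $E_1=\mathrm{span}\{y_i|_{S^{n-1}}:1\le i\le n\}$ are annihilated by $L$, because they correspond, to first order, to translating the ball, which keeps it a ball and hence keeps the Neumann data constant; and $\sigma_k\neq0$ for $k\geq2$, because $\lambda_1$ lies strictly below the first Dirichlet eigenvalue of $B_1$ in every positive-degree mode. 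Thus $L$ is an isomorphism from $E_1^\perp$ (the $L^2(S^{n-1})$-orthogonal complement of $E_1$ inside the mean-zero functions) onto its target. I would then show that the same holds, with bounds uniform as $\epsilon\to0$, for the genuine linearization $D_vF(p,v,\epsilon)$ when $\epsilon$ and $\|v\|$ are small; proving these $\epsilon$-uniform a priori estimates for the family of shrinking domains is where most of the analytic effort is concentrated.

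\textbf{The reduction and part (1).} Given uniform invertibility, fix $p$ and solve $\Pi_{E_1^\perp}F(p,v,\epsilon)=0$ by the implicit function theorem for a unique small $v=v(p,\epsilon)\in E_1^\perp$, smooth in $(p,\epsilon)$ (compactness of $M$ giving a uniform $\epsilon_0$); since $B_1$ is extremal and $\bar g_\epsilon$ departs from flat only at order $\epsilon^2$, one has $F(p,0,\epsilon)=O(\epsilon^2)$, whence $v(p,\epsilon)$ satisfies the estimate in part (1). Put $\Omega_\epsilon(p):=\Omega_\epsilon(p,v(p,\epsilon))$ and define
\[
\Phi(p,\epsilon)\;:=\;-\,\frac{6n(n+2)}{n(n+2)+2\lambda_1}\Bigl(\lambda\bigl(p,v(p,\epsilon),\epsilon\bigr)-\lambda_1\,\epsilon^{-2}\Bigr)\,,
\]
so that (3) holds by construction once the expansion of $\lambda$ is established. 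Now I would use the variational characterization of extremality of \cite{ElS-Ilias}: $\Omega\mapsto\lambda_\Omega$ is a smooth functional whose critical points under the volume constraint are exactly the domains where $g(\nabla u,\nu)$ is constant, i.e.\ the zeros of $F$. Since $v(p,\epsilon)$ kills the $E_1^\perp$-component of this Euler--Lagrange operator, and since an infinitesimal change of base point acts on the boundary --- to leading order --- as a degree-one normal graph, that is, along $\ker L=E_1$, the chain rule yields that $\nabla_p\Phi(p,\epsilon)=0$ forces the remaining $E_1$-component of $F(p,v(p,\epsilon),\epsilon)$ to vanish as well; hence $\nabla_p\Phi(p,\epsilon)=0$ is equivalent to $F(p,v(p,\epsilon),\epsilon)=0$, i.e.\ to $\Omega_\epsilon(p)$ being extremal. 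This is part (1). Finally, when $M$ is compact $\Phi(\cdot,\epsilon)$ is a smooth function on a compact manifold, so it automatically possesses critical points --- which is precisely why the hypothesis of \cite{Pac-Sic} on the existence of a nondegenerate critical point of the scalar curvature can be dropped.

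\textbf{The expansions (2) and (3), and the main obstacle.} It remains to expand $\Phi$, which is the computational heart. I would insert into the whole construction the Taylor expansion of the metric in geodesic normal coordinates,
\[
g_{ij}(x)=\delta_{ij}-\tfrac13 R_{ikjl}\,x^kx^l-\tfrac16\nabla_m R_{ikjl}\,x^kx^lx^m+Q_{ijklmn}\,x^kx^lx^mx^n+O(|x|^5),
\]
with $Q$ the standard fourth-order coefficient (quadratic in $Riem$ and linear in $\nabla^2 Riem$), and propagate it through the eigenvalue problem on $B^g_\epsilon(p)$ and through the correction coming from $v(p,\epsilon)$. The cubic (odd) term integrates to zero against the radial profile, so there is no $\epsilon^3$ contribution; the first correction beyond $R_p$ comes from the fourth-order term together with the second-variation (Hadamard-type) effect of $v(p,\epsilon)$, which enters exactly at order $\epsilon^4$ in $\lambda\,\epsilon^2$ because $v(p,\epsilon)$ is $O(\epsilon^2)$ in the rescaled picture while the first variation of $\lambda\,\epsilon^2$ at $v=0$ is itself $O(\epsilon^2)$. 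Contracting the fourth-order term and simplifying with the (contracted) second Bianchi identities leaves precisely a universal linear combination $\mathbf r=K_1\|Riem\|^2+K_2\|Ric\|^2+K_3R^2+K_4\Delta_gR$ with $K_i=K_i(n)$, the $\Delta_gR$ arising from the trace of $\nabla^2 Riem$; smooth dependence of all ingredients on $p$ gives the $\mathcal C^k(M)$ bounds in (2), and substituting into the definition of $\Phi$ gives the expansion of $\lambda_{\Omega_\epsilon}$ in (3). \textbf{The hard part} is twofold: establishing the $\epsilon$-uniform invertibility of the linearized operator on $E_1^\perp$ in a suitable functional-analytic setting (the domains degenerate as $\epsilon\to0$), and carrying the curvature expansion cleanly to fourth order while keeping accurate track of the $v(p,\epsilon)$-correction --- one full order beyond what was needed in \cite{Pac-Sic}.
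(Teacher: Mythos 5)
Your outline follows essentially the same route as the paper: rescale to work on the fixed unit ball with a metric that is Euclidean up to $\mathcal O(\epsilon^2)$, use the Pacard--Sicbaldi linearization $H$ (your $L$) with kernel exactly $V_1$ to solve $F$ modulo the degree-one harmonics, define $\Phi$ by renormalizing the resulting eigenvalue $\Psi_\epsilon(p)$, and push the geodesic-normal-coordinate expansion one order further to identify $\mathbf r$ as a universal curvature combination.

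One step in your sketch deserves a caveat, since it is the one place where the paper has to work. You assert that ``the chain rule yields that $\nabla_p\Phi(p,\epsilon)=0$ forces the remaining $E_1$-component of $F$ to vanish,'' but the Garabedian--Schiffer/Hadamard formula expresses $D_p\Psi_\epsilon(\Xi)$ as $-\int_{\partial B_1}\bigl(\hat g(\hat\nabla\hat\phi,\hat\nu)\bigr)^2\,\hat g(\hat Z,\hat\nu)$, which is \emph{quadratic} in the Neumann data $b-\langle a,\cdot\rangle$, not linear. Expanding gives a dominant term $2b\int\langle a,\cdot\rangle\,\hat g(\hat Z,\hat\nu)$ but also a competing quadratic term $\int\langle a,\cdot\rangle^2\,\hat g(\hat Z,\hat\nu)$, plus errors from the discrepancy between $\hat g(\hat Z,\hat\nu)$ and $\langle\Xi,\cdot\rangle$ (which involves the $p$-dependence of $v(p,\epsilon)$, not just parallel transport). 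The conclusion $a=0$ therefore requires choosing a good test direction (the paper takes $\Xi=ba$), invoking the a priori bound $\|a\|=\mathcal O(\epsilon^2)$, and showing $b\neq 0$ (via $\hat\lambda>0$) so that the linear term cannot be cancelled by the quadratic one. Without making this estimate explicit your ``chain rule'' argument is incomplete; with it, the reduction goes through exactly as you describe, and the remainder of your plan (parity killing the $\epsilon^3$ term, identifying the fourth-order curvature contribution, the $C^k$-bounds from smooth dependence on $p$) matches the paper's computations in Sections 6 and 7.
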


\medskip 

The explicit computation of the constants $K_i$ is given in section \ref{localisationD}. We remark that if $M$ is compact, then there exists always a critical point of $\Phi(\cdot, \epsilon)$, and then we have small extremal domains obtained as perturbation of small geodesic balls in every compact Riemannian manifold without boundary. 

\medskip

It is clear that this theorem generalizes the result of \cite{Pac-Sic} because the construction of extremal domains does not require the existence of a nondegenerate critical
point of the scalar curvature. In fact, if the scalar curvature function $R$ has a nondegenerate critical point $p_0$, then for all $\epsilon$ small enough there exists a critical point $p = p(\epsilon)$ of $\Phi(\cdot,\epsilon)$ such that
\[
\textnormal{dist}(p, p_0) \leq c\, \epsilon^2.
\]
and then the geodesic ball $B^g_\epsilon(p)$ can be perturbed in order to obtain an extremal domain. We recover in this case the result of \cite{Pac-Sic}, but with a better estimation of the distance of $p$ to $p_0$ (in \cite{Pac-Sic} the distance between $p$ and $p_0$ is bounded by $c\, \epsilon$). In particular, we have the $p$-{\it independent} expansion 
$$
\lambda_{\Omega_\epsilon}=\lambda_1\, \epsilon^{-2}-\frac{n(n+2)+2\lambda_1}{6n(n+2)}\, R_{p_0}+\mathcal{O}(\epsilon^2)
$$
The result of \cite{Pac-Sic} can not be applied to some natural metrics such that a Einstein one, i.e when $Ric = k\, g$ for some constant $k$, or simply a constant scalar curvature one. 
In the case where $R$ is a constant function, one gets the existence of extremal domains close to any nondegenerate critical point of the function ${\bf r}$. In the particular case where the metric $g$ is Einstein we obtain extremal domains close to any nondegenerate critical point of the function (we will see that $K_1\neq0$)
\[
p \to \|Riem_p\|^2\, .
\]

\medskip

In order to put the result in perspective let us digress slightly. The solutions of the isoperimetric problem
\[
I_\kappa : = \min_{\Omega \subset M \, : \, {\rm Vol}_g \, \Omega = \kappa} \mbox{Vol}_{g_\textnormal{in}} \, \partial \Omega 
\]
are (where they are smooth enough) constant mean curvature hypersurfaces (here $g_{\textnormal{in}}$ denotes the induced metric on the boundary of $\Omega$). In fact, constant mean curvature are the critical points of the area functional 
\[
\Omega \to \mbox{Vol}_{g_\textnormal{in}} \, \partial \Omega 
\]
under a volume constraint ${\rm Vol}_g \, \Omega = \kappa$. Now, it is well known (see \cite{Faber}, \cite{Krahn} and \cite{Kra-2}) that the determination of the isoperimetric profile $I_\kappa$ is related to the Faber-Kr\"ahn profile, where one looks for the least value of the first eigenvalue of the Laplace-Beltrami operator amongst domains with prescribed volume
\[
FK_\kappa : = \min_{\Omega \subset M \, : \, {\rm Vol}_g \, \Omega = \kappa } \lambda_{\Omega} 
\]
A smooth solution to this minimizing problem is an extremal domain, and in fact extremal domains are the critical points of the functional 
\[
\Omega \to \lambda_{\Omega}
\]
under a volume constraint ${\rm Vol}_g \, \Omega = \kappa$. 

\medskip

The result of F. Pacard and P. Sicbaldi \cite{Pac-Sic} had been inspired by some parallel results on the existence of constant mean curvature hypersurfaces in a Riemannian manifold $M$. In fact, R. Ye built in \cite{Ye} constant mean curvature topological spheres which are close to geodesic spheres of small radius centered at a nondegenerate critical point of the scalar curvature, and the result of F. Pacard and P. Sicbaldi can be considered the parallel of the result of R. Ye in the context of extremal domains. The method used in \cite{Pac-Sic} is based on the study of the operator that to a domain associates the Neumann value of its first eigenfunction, which is a nonlocal first order elliptic operator. This represents a big difference with respect to the result of R. Ye, where the operator to study was a local second order elliptic operator.

\medskip

In a recent paper, \cite{Pac-Xu}, F. Pacard and X. Xu generalize the result of R. Ye by eliminating the hypothesis of the existence of a nondegenerate critical point of the scalar curvature function. For every $\epsilon$ small enough, they are able to build a small topological sphere of constant mean curvature equal to $\frac{n-1}{\epsilon}$ by perturbing a small geodesic ball centered at a critical point of a certain function defined on $M$ which is close to the scalar curvature function. For this, they use the variational characterization of constant $H_0$ mean curvature hypersurfaces as critical points of the functional 
\[
S \to {\rm Vol}_{g_\textnormal{in}}(S) - H_0\,  {\rm Vol}_{g}(D_S)
\]
in the class of topological sphere, where $D_S$ is the domain enclosed by $S$, see \cite{Pac-Xu}.

\medskip 

Our construction is based on some ideas of \cite{Pac-Xu}. For this, we use the variational characterization of extremal domains. The main difference and difficulties with respect to the result of F. Pacard and X. Xu arise in the fact that there does not exist an explicit formulation to compute the first eigenvalue of a domain while there exists an explicit formulation to compute the volume of a surface. 

\medskip

Our result shows once more the similarity between constant mean curvature hypersurfaces and extremal domains. The deep link between such two objects has been underlined also in \cite{Ros-Sic} and \cite{Sch-Sic}. 

\medskip

It is important to remark that P. Sicbaldi was able to build extremal domains of big volume in some compact Riemannian manifold without boundary by perturbing the complement of a small geodesic ball centered at a nondegenerate critical point of the scalar curvature function, see \cite{Sic}. As in the case of small volume domains, the existence of a nondegenerate critical point of the scalar curvature function is mandatory (and such result requires also that the dimension of the manifold is at least 4). It would be interesting to adapt our result in order to build extremal domains of big volume in any compact Riemannian manifold without boundary by perturbing the complement of small geodesic balls of radius $\epsilon$ centered at a critical point of the function $\Phi(\cdot, \epsilon)$ or some other similar function.   This result would allow for example to obtain extremal domains $\Omega_\epsilon$ that are given by the complement of a small topological ball in a flat 2-dimensional torus, and by the characterization of extremal domains this would lead to a nontrivial solution of (\ref{serrin}), with $f(t) = \lambda\, t$, in the universal covering $\tilde \Omega_\epsilon$ of $\Omega_\epsilon$, which is a nontrivial unbounded domain of $\mathbb{R}^2$. Up to our knowledge the existence of this unbounded domain is not known. Remark that $\tilde \Omega_\epsilon$ is a double periodic domain, made by the complement of a infinitely countable union of topological balls. The existence of $\tilde \Omega_\epsilon$ would establish once more the strong link between extremal domains and constant mean curvature surfaces, via the double periodic constant mean curvature surfaces (see \cite{GBr}, \cite{Rit} and \cite{Rit-th}).

\medskip

{\bf Acknowledgement.} Both authors are grateful to Philippe Delano\"e for his pleasant ``S\'eminaire commun d'analyse g\'eom\'etrique" that took place at CIRM (Marseille) in september 2012, where they met
and started the collaboration. This work was done from september 2012 to january 2013, when the first author was member of the Laboratoire d'Analyse Topologie et Probabilit\'e of the Aix-Marseille University as ``chercheur CNRS en d\'el\'egation", and he his grateful to the member of such research laboratory for their warm hospitality.  The first author is partially supported by 
the ANR-10-BLAN 0105  ACG and the ANR SIMI-1-003-01.

\section{Notations and preliminaries}

Let $\Omega_0$ be a smooth bounded domain in $M$. We say that $\{ \Omega_t \}_{t \in (-t_0, t_0)}$ is a deformation of $\Omega_0$ if there exists a vector field $\Xi$ such that 
$\Omega_t = \xi (t, \Omega_0)$ where  $\xi(t, \cdot)$ is the flow associated to $\Xi$, namely 
\[
\frac{d\xi}{dt} (t,p)= \Xi (\xi(t,p)) \qquad \mbox{and} \qquad \xi(0, p) =p \,.
\] 
In this case we say that $\Xi$ is the vector field that generates the deformation.
The deformation is said to be volume preserving if the volume of $\Omega_t$ does not depend on $t$. If $\{ \Omega_t \}_{t \in (-t_0, t_0)}$ is a deformation of $\Omega_0$, and $\lambda_{\Omega_t}$ and $u_t$ are respectively the first eigenvalue and the first eigenfunction (normalized to be positive and have $L^2 (\Omega_t)$ norm equal to $1$) of $-\Delta_{g}$ on $\Omega_t$ with $0$ Dirichlet boundary condition, both applications $t \longmapsto \lambda_{\Omega_t}$  and $t \longmapsto u_t$  inherit the regularity of the deformation of $\Omega_0$. These facts are standard and follow at once from the implicit function theorem together with the fact that the least eigenvalue of the Laplace-Beltrami operator with 0 Dirichlet boundary condition is simple. 

\medskip

A domain $\Omega_{0}$ is an \textit{extremal domain} (for the first eigenvalue of $-\Delta_{g}$ with 0 Dirichlet boundary condition) if for any volume preserving deformation $\{{\Omega}_t\}_{t \in (-t_0, t_0)}$ of ${\Omega}_{0}$, we have 
\[
\left.\frac{\textnormal{d} \lambda_{\Omega_t}}{\textnormal{d}t} \right|_{t =0} = 0 \, .
\] 




\medskip
 
Assume that $\{{\Omega}_t\}_t$ is a perturbation of a domain $\Omega_{0}$ generated by the vector field $\Xi$. The outward unit normal vector field to $\partial \Omega_{t}$ is denoted by $\nu_t$. We have the following result, whose proof can be found in \cite{ElS-Ilias} or in \cite{Pac-Sic}:
\begin{Proposition}\label{lambda} (Garabedian -- Schiffer, El Soufi -- Ilias). 
The derivative of the first eigenvalue with respect to the deformation of the domain is given by
\[
\left.\frac{\textnormal{d} \lambda_{\Omega_t}}{\textnormal{d}t} \right|_{t =0} = - \int_{\partial \Omega_{0}}  \left( g(\nabla  u_0 ,  \nu_0) \right)^{2}\  g(\Xi, \nu_0) \, \mbox{\rm dvol}_{g_\textnormal{in}}
\]
\end{Proposition}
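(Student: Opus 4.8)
The plan is to carry out the classical Hadamard-type shape-derivative computation. Write $\lambda_t = \lambda_{\Omega_t}$ and let $u_t$ be the normalized first eigenfunction on $\Omega_t$; as recalled just above the statement, $t \mapsto \lambda_t$ and $t \mapsto u_t$ are smooth. First I would introduce the pulled-back functions $\phi_t := u_t \circ \xi(t, \cdot)$ on the \emph{fixed} domain $\Omega_0$, so that $\phi_0 = u_0$, together with the Eulerian (shape) derivative $u' := \partial_t u_t|_{t=0}$ computed pointwise in the interior of $\Omega_0$ (legitimate on any compact subset of $\Omega_0$, since such a set lies in $\Omega_t$ for $|t|$ small). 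Differentiating the chain rule gives $\dot\phi = u' + g(\nabla u_0, \Xi)$ on $\Omega_0$, where $\dot\phi := \partial_t \phi_t|_{t=0}$. Differentiating the eigenvalue equation $\Delta_g u_t + \lambda_t u_t = 0$ in $t$ at $t = 0$ yields
\[
\Delta_g u' + \lambda_0 \, u' = -\dot\lambda \, u_0 \qquad \text{in } \Omega_0,
\]
where $\dot\lambda := \partial_t \lambda_t|_{t=0}$ is the quantity to be computed.

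Next I would identify the boundary data of $u'$. Since $u_t \equiv 0$ on $\partial \Omega_t$, the pulled-back functions satisfy $\phi_t \equiv 0$ on $\partial \Omega_0$ for every $t$, hence $\dot\phi = 0$ on $\partial \Omega_0$, and therefore $u' = -g(\nabla u_0, \Xi)$ there. Because $u_0$ vanishes identically on $\partial \Omega_0$, its gradient is normal to the boundary, i.e. $\nabla u_0 = g(\nabla u_0, \nu_0)\, \nu_0$ on $\partial \Omega_0$, so
\[
u' = -\, g(\nabla u_0, \nu_0)\; g(\Xi, \nu_0) \qquad \text{on } \partial \Omega_0.
\]

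The final step is to extract $\dot\lambda$ by testing the equation for $u'$ against $u_0$ and integrating over $\Omega_0$. Using $\int_{\Omega_0} u_0^2\, \mathrm{dvol}_g = 1$, Green's identity, the fact that $u_0|_{\partial\Omega_0} = 0$, and $\Delta_g u_0 = -\lambda_0 u_0$, the interior terms cancel and one is left with
\[
\dot\lambda = \int_{\partial \Omega_0} u'\; g(\nabla u_0, \nu_0)\, \mathrm{dvol}_{g_\textnormal{in}}
= -\int_{\partial \Omega_0} \bigl( g(\nabla u_0, \nu_0) \bigr)^2\, g(\Xi, \nu_0)\, \mathrm{dvol}_{g_\textnormal{in}},
\]
which is the asserted formula. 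The computation itself is only a couple of lines; the one genuinely delicate point is the regularity bookkeeping, namely knowing that $u'$ exists, is regular up to $\partial \Omega_0$ (enough to apply Green's identity, say $C^1(\overline{\Omega_0})$ or at least $H^1$), and attains the boundary value found above. This rests on elliptic regularity together with the smooth dependence of $(\lambda_t, u_t)$ on $t$, which in turn follows from the simplicity of $\lambda_0$ and the implicit function theorem; since the statement already cites \cite{ElS-Ilias} and \cite{Pac-Sic}, I would invoke those references for the regularity and present the identity as above.
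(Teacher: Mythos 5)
Your argument is correct, and it is the standard Hadamard-type shape-derivative computation; the paper itself does not present a proof but defers to \cite{ElS-Ilias} and \cite{Pac-Sic}, which use essentially this same scheme (pull back to the fixed domain, identify the boundary value of the material derivative via the fact that $\nabla u_0$ is normal to $\partial\Omega_0$ where $u_0$ vanishes, differentiate the PDE, and extract $\dot\lambda$ as the Fredholm compatibility condition by pairing against $u_0$). One small point worth making explicit if you were to write this out: the linearized problem for $u'$ determines $u'$ only modulo the kernel $\mathbb{R} u_0$, and what pins it down is the differentiated normalization; but since the testing against $u_0$ makes the ambiguous interior terms cancel, the value of $\dot\lambda$ you obtain is independent of that choice, so the argument is sound as stated.
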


This result allows to characterize extremal domains as the domains where there exists a positive solution to the overdetermined elliptic problem
\begin{equation}
\left\{
\begin{array}{rclll}
	\displaystyle \Delta_{g} u + \lambda \, u & = & 0 & \textnormal{in} & \Omega \\[3mm]
	\displaystyle u & = & 0 & \textnormal{on} & \partial \Omega \\[3mm]
	\displaystyle g (\nabla  u , \nu) & = & \textnormal{constant} & \textnormal{on} & \partial\Omega  \,
\end{array}
\right.
\end{equation}
for a positive constant $\lambda$, where $\nu$ is the outward unit normal vector about $\partial \Omega$. The proof of this fact follows directly from Proposition \ref{lambda}, but can be found also in \cite{Pac-Sic}.

\medskip

Given a point $p\in M$ we denote by $E_{1} ,\ldots, E_n$ an orthonormal basis of the tangent plane $T_p M$. Geodesic normal coordinates $x : =(x^1, \ldots, x^n) \in \mathbb R^n$ at $p$ are defined by 
\[
X (x) : =\mbox{Exp}_p^g \left( \sum_{j=1}^n x^j \, E_j \right) \in M
\]
where $\mbox{Exp}_p^g$ is the exponential map at $p$ for the metric $g$. 

\medskip

It will be convenient to identify $\mathbb R^n$ with $T_pM$ and $S^{n-1}$ with the unit sphere in $T_pM$. If  $x : = (x^1, \ldots, x^n) \in \mathbb R^n$, we set
\begin{equation}\label{nottheta}
\Theta (x) : = \sum_{i=1}^n x^i \, E_i \in T_pM \, .
\end{equation}
It corresponds to the vector of $T_p M$ whose geodesic normal coodinates are $x$. 
Given a continuous function $f : S^{n-1} \longmapsto (0, +\infty)$ whose $L^\infty$-norm is small (say less than the cut locus of $p$)  we define 
\[
B_{f}^g (p) : =  \left\{ \mbox{Exp}_p^g (  \Theta (x)) \qquad : \qquad  x \in \mathbb R^{n} \qquad 0 \leq |x|   < f \left(\frac{x}{|x|}\right) \right\} \, . 
\]
For notational convenience, given a continuous function $f : S^{n-1} \to (0, \infty)$, we set 
\[
B_{f} : =  \left\{  x \in \mathbb R^n  \qquad :  \qquad 0 \leq |x|   < f (x/|x|) \right\} \, . 
\]
When we do not indicate the metric as a superscript, we understand that we are using the Euclidean one. Similarly, we denote by $\textnormal{Vol}_g$ the volume in the metric $g$, by $\textnormal{dvol}_g$ the volume element in the metric $g$ to integrate over a domain, by $\textnormal{dvol}_{g_{\textnormal{in}}}$ the volume element in the induced metric $g_{\textnormal{in}}$ to integrate over the boundary of a domain. When we do not indicate anything we understand that we are considering the Euclidean volume, or the Euclidean measure, or the measure induced by the Euclidean one on boundaries.

\medskip

Our aim is to show that, for all  $\epsilon >0$ small enough, we can find a point $p\in M$ and a function $v : S^{n-1} \longrightarrow \mathbb R$  such that 
\[
{\rm Vol}_g \, B_{\epsilon(1+v)}^g(p) = {\rm Vol}\, B_\epsilon =  \epsilon^n \, {\rm Vol}\, B_1 =   \epsilon^n \,  \frac{\omega_n}{n}
\]
(where $\omega_n$ is the Euclidean volume of the unit sphere $S^{n-1}$) and the overdetermined problem
\begin{equation}\label{a1}
\left\{
\begin{array}{rcccl}
	\Delta_{g} \, \phi + \lambda \, \phi & = & 0 & \textnormal{in} & B_{\epsilon(1+v)}^g (p) \\[3mm]
	\phi & = & 0 & \textnormal{on} & \partial B^g_{\epsilon(1+v)}(p) \\[3mm]
	\displaystyle  g( \nabla  \phi ,  \nu) & = & {\rm constant} & \textnormal{on} & \partial B_{\epsilon(1+v)}^g (p) 
\end{array}
\right.
\end{equation}
has a non trivial positive solution for some positive constant $\lambda$, where $\nu$ is the unit normal vector field about $\partial B_{\epsilon(1+v)}^g(p)$. 
\medskip

Clearly, this problem does not make sense when $\epsilon = 0$. In order to bypass this problem, we observe that, considering the dilated metric $\bar g : =  \epsilon^{-2} \, g$, the above problem is equivalent to finding a point $p\in M$ and a function $v : S^{n-1} \longrightarrow \mathbb R$  such that 
\[
{\rm Vol}_{\bar g} \, B_{1+v}^{\bar g}(p) =  {\rm Vol} \, B_{1}
\]
and for which the overdetermined problem
\begin{equation}\label{a1}
\left\{
\begin{array}{rcccl}
	\Delta_{\bar g} \, \bar \phi + \bar \lambda \, \bar \phi & = & 0 & \textnormal{in} & B_{1+v}^{\bar g} (p) \\[3mm]
	\bar \phi & = & 0 & \textnormal{on} & \partial B^{\bar g}_{1+v}(p) \\[3mm]
	\displaystyle  \bar g( \nabla^{\bar g}  \bar \phi , \bar \nu ) & = & {\rm constant} & \textnormal{on} & \partial B_{1+v}^{\bar g} (p) 
\end{array}
\right.
\end{equation}
has a non trivial positive solution for some positive constant $\bar \lambda$, where $\bar \nu$ is the unit normal vector field about $\partial B_{1+v}^{\bar g}(p)$.  Taking in account that the functions $\phi$ and $\bar \phi$ have $L^2$-norm equal to 1, we have that the relation between the solutions of the two problems is simply given by 
\[
\phi = \epsilon^{- n/2} \, \bar \phi
\]
and 
\[
\lambda = \epsilon^{-2}Ê\, \bar \lambda \, .
\]

\section{Some expansions in normal geodesic coordinates}

We precise that through this paper we consider the following definition of the Riemann curvature tensor:
\[
Riem(X,Y)Z = \nabla_X\nabla_YZ-\nabla_Y\nabla_XZ-\nabla_{[X,Y]}Z
\]
where $\nabla$ denotes the Levi-Civita connection on the manifold $M$.

\medskip

Geodesic normal coordinates are very useful because there exists a well known formula for the expansion of the coefficients of a metric near the center of such coordinates, see \cite{willmore}, \cite{lee} or \cite{schoen}. At the point of coordinate $x$, the following expansion holds\footnote{We choose the convention of \cite{willmore}, some sign in the development  are different from those in \cite{Pac-Xu} or \cite{Pac-Sic} because
of a different  choice of the definition of $R_{ijkl}$}:
\begin{equation}\label{expmetric}
\begin{array}{lll}
g_{ij} & = & \displaystyle \delta_{ij} - \frac{1}{3} \, \sum_{k,\ell} R_{ikj\ell} \, x^{k} \, x^{\ell} - \frac{1}{6} \,\sum_{k, \ell, m}  R_{ikjl,m} \, x^{k} \, x^{\ell} \, x^{m}\\[4mm]
& &  \displaystyle- \frac{1}{20} \,\sum_{k, \ell, m, \sigma}  R_{ikjl,m\sigma} \, x^{k} \, x^{\ell} \, x^{m}\, x^{\sigma} + \frac{2}{45} \, \sum_{k,\ell, m, \sigma} R_{ikj\ell} \,  R_{imj\sigma} \, x^{k} \, x^{\ell} \, x^{m}\, x^{\sigma}  + {\mathcal O}(|x|^{5})
\end{array}
\end{equation}
where
\begin{eqnarray*}
R_{ikj\ell} & = & g\big( Riem_p(E_{k}, E_{i}) \, E_{j} ,E_{\ell}\big)\\[3mm]
R_{ikj\ell , m} & = & g\big((\nabla_{E_{m}} Riem)_p (E_{k}, E_{i}) \, E_{j} , E_{\ell} \big) \,
\\[3mm]
R_{ikj\ell , m\sigma} & = & g\big((\nabla_{E_{\sigma}} \nabla_{E_{m}} Riem)_p (E_{k}, E_{i}) \, E_{j} , E_{\ell} \big) \, ,
\end{eqnarray*}
and the subscript $p$ means that we evaluate the quantity at $p$. In (\ref{expmetric}) the Einstein notation is used (i.e.,  we do a summation on every index appearing up and down). Such notation will be always used through this paper. 

\medskip 

This expansion allows to obtain other expansions, as those of the volume of a geodesic ball, or the first eigenvalue and the first eigenfunction on a geodesic ball. In order to recall such expansions, let us introduce some notations. Let us denote by $\lambda_{1}$ the first eigenvalue of the Laplacian in the unit ball $B_{1}$ with $0$ Dirichlet boundary condition.  We denote by $\phi_{1}$ the associated eigenfunction 
\begin{equation}
\left\{
\begin{array}{rcccl}
	\Delta \phi_{1} + \lambda_{1}\, \phi_{1} & = & 0 & \textnormal{in} & B_{1} \\[3mm]
	\phi_{1} & = & 0 & \textnormal{on} & \partial B_{1} 
\end{array}
\right.
\label{eq:11-11}
\end{equation}
normalized to be positive and have $L^2 (B_{1})$ norm equal to $1$. It is clear that $\phi_1$ is a radial function $\phi_1(x) = \phi_1(|x|)$. We denote $r = |x|$.

\medskip

We recall now some expansions we will need later, whose proofs can be deduced from (\ref{expmetric}). We remind to \cite{Pac-Xu} and \cite{Karp-Pinsky} for the proofs. For the volume of a geodesic ball of radius $\epsilon$ we have:
\begin{equation}\label{expvol}
\epsilon^{-n}\, \textnormal{Vol}_g\, B^g_{\epsilon}(p)=\displaystyle \frac{\omega_n}{n}\, + W_0\, \epsilon^2
+W \, \epsilon^4+
 \mathcal{O}(\epsilon^5),
\end{equation}
where 
\begin{equation}\label{W}
\begin{array}{lll}
W_0&=&\displaystyle- \frac{\omega_n}{6n\,(n+2)}\, R_p\\[4mm]
W&=&\displaystyle \frac{\omega_n}{360\, n\,(n+2)\,(n+4)}\,\left(-3\,\|Riem_p\|^2+8\,\|Ric_p\|^2+5\,R_p^2-18\,(\Delta_gR)_p\right)
\end{array}
\end{equation}
For the first eigenvalue of the Laplace-Beltrami operator with 0 Dirichlet boundary condition on a geodesic ball of radius $\epsilon$ we have: 
\begin{equation}\label{expeig}
\displaystyle \epsilon^2\, \lambda_{B_\epsilon^g(p)}=  \displaystyle \lambda_1+ \Lambda_0\, \epsilon^2\, 
+ \Lambda \, \epsilon^4
+ \mathcal{O}(\epsilon^5)
\end{equation}
where 
\begin{equation}\label{constL}
\begin{array}{lll}
\Lambda_0&=&  - \displaystyle \frac{R_p}{6}\\[4mm]
\Lambda&=& \displaystyle -\frac{c^2}{n(n+2)}\, \left(3\, \|Riem_p\|^2+\frac{35}{18}\, \|Ric_p\|^2+\frac{5n-3}{18n}\, R_p^2+\frac15(\Delta_g R)_p\right)
\end{array}
\end{equation}
and the constant $c^2$ is given by
\[
c^2 = -\int_0^1\phi_1\, \partial_r\phi_1\,  r^{n+2}\, \textnormal{d}r=\frac{n+2}2\int_0^1\phi_1^2\, r^{n+1}\, \textnormal{d}r
\]

For the associate eigenfunction $\phi$ in the geodesic ball $B^g_\epsilon(p)$ normalized to be positive and with $L^2$-norm equal to 1, we have
\begin{equation}\label{phigeod}
\epsilon^{n/2}\, \phi (q) =  \phi_1 (y) + \left[\left(R_{ij}\,y^i\,y^j-\frac R n |y|^2\right)\frac{\phi_1}{12}+R\;G_2(|y|)\right]\, \epsilon^2+\mathcal{O}(\epsilon^3)
\end{equation}
where $q$ is the point of $M$ whose geodesic coordinates are $\epsilon\, y$ for $y \in B_1$, and $G_2$ is defined implicitly as a solution of an ODE in \cite{Karp-Pinsky}.
Although we do not need its expression, for completeness we recall it: if we solve such ODE we found 
\begin{equation}\label{g2}
G_2(r)=\frac1{12\, n}\, r^2\, \phi_1(r)-c^2\, \frac{\omega_n}{6n\, (n+2)}\,  \phi_1(r)\, .
\end{equation}



 \medskip

\section{Known results}

Our aim is to perturbe the boundary of a small ball $B^{\bar g}_1(p)$ with a function $v$ in order to obtained an extremal domain $B^{\bar g}_{1+v}(p)$. The natural space for the function $v$ is $C^{2,\alpha}(S^{n-1})$ but not all functions in this space are admissible because $v$ must satisfy also the condition
\[
\textnormal{Vol}_{\bar g}\, B^{\bar g}_{1+v}(p) = \textnormal{Vol}\, B_1
\]
In order to have a space of admissible functions not depending on the point $p$, we use a result proved in \cite{Pac-Sic}, that allows to use as space of admissible function the space
\[
C^{2,\alpha}_m(S^{n-1}) = \left\{ \bar v \in C^{2,\alpha}(S^{n-1}) \qquad : \qquad \int_{S^{n-1}} \bar v  =0 \right\}
\]

The result is the following:
\begin{Proposition} (Pacard -- Sicbaldi \cite{Pac-Sic})
\label{pr:1.2}
Let $p \in M$. For all $\epsilon$ small enough and all function $\bar v \in C^{2,\alpha}_m(S^{n-1})$ whose $C^{2, \alpha}$-norm is small enough there exist a unique positive function $\bar \phi = \bar \phi (p, \epsilon, \bar v) \in C^{2, \alpha} (B_{1+v}^{\bar g} (p))$, a constant $\bar \lambda = \bar \lambda (p,\epsilon, \bar v) \in \mathbb R$ and a constant $v_0 = v_0 (p, \epsilon, \bar v) \in \mathbb R$ such that
\[
{\rm Vol}_{\bar g}\,  B^{\bar g}_{1+v} (p) = {\rm Vol}\, B_{1}
\]
where $v : =  v_0 + \bar v$ and $\bar \phi$ is a  solution to the problem
\begin{equation}\label{formula}
\left\{
\begin{array}{rcccl}
	\Delta_{\bar g} \, \bar \phi + \bar \lambda \, \bar \phi & = & 0 & \textnormal{in} & B_{1+v}^{\bar g}(p) \\[3mm]
	\bar \phi & = & 0 & \textnormal{on} & \partial B_{1+v}^{\bar g} (p)
\end{array}
\right.
\end{equation}
normalized by 
\[
\int_{B_{1+v}^{\bar g} (p)} \bar \phi^2 \, \textnormal{dvol}_{\bar g} =1 .
\label{noral}
\]
In addition $\bar \phi$, $\bar \lambda$ and $v_0$ depend smoothly on the function $\bar v$ and the parameter $\epsilon$ and $\bar \phi = \phi_1$, $\bar \lambda =\lambda_1$ and $v_0 = 0$ when $\epsilon =0$ and $\bar v\equiv 0$. Moreover $v_0(p, \epsilon, 0) = \mathcal{O}(\epsilon^2)$.
\end{Proposition}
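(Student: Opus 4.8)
The plan is to solve the overdetermined-free Dirichlet problem \eqref{formula} together with the volume constraint by a straightforward application of the implicit function theorem, treating $(\bar\phi,\bar\lambda,v_0)$ as the unknowns and $(\epsilon,\bar v)$ as parameters, around the base point $\epsilon=0$, $\bar v\equiv 0$, where the solution is $(\phi_1,\lambda_1,0)$. First I would fix the point $p\in M$ and pull everything back to the fixed unit ball $B_1$: using geodesic normal coordinates at $p$ for the metric $\bar g=\epsilon^{-2}g$, and then composing with a fixed diffeomorphism of $B_1$ onto $B_{1+v}$ (for instance the radial stretching $x\mapsto (1+v(x/|x|))x$, or a smoothed version thereof near the origin), I can rewrite \eqref{formula} as an elliptic equation $L_{\epsilon,v}\psi+\bar\lambda\,\psi=0$ on the \emph{fixed} domain $B_1$ with $\psi=0$ on $\partial B_1$, where $L_{\epsilon,v}$ is a second-order elliptic operator depending smoothly on $(\epsilon,v)$ and coinciding with the flat Laplacian $\Delta$ when $\epsilon=0$ and $v=0$. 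The normalization $\int\bar\phi^2\,\mathrm{dvol}_{\bar g}=1$ becomes a smooth scalar constraint, and the volume condition $\mathrm{Vol}_{\bar g}B^{\bar g}_{1+v}(p)=\mathrm{Vol}\,B_1$ becomes a smooth scalar equation in $(\epsilon,v_0,\bar v)$.

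Next I would set up the map whose zero we seek. Writing $v=v_0+\bar v$ with $\bar v\in C^{2,\alpha}_m(S^{n-1})$ and $v_0\in\mathbb R$, define
\[
\mathcal F(\epsilon,\bar v;\psi,\bar\lambda,v_0)=\Bigl(L_{\epsilon,v}\psi+\bar\lambda\,\psi,\ \ \textstyle\int_{B_1}\psi^2\,\mathrm{dvol}_{\bar g}-1,\ \ \mathrm{Vol}_{\bar g}B^{\bar g}_{1+v}(p)-\mathrm{Vol}\,B_1\Bigr),
\]
as a map from a neighborhood of $(0,0;\phi_1,\lambda_1,0)$ in $\bigl(\mathbb R\times C^{2,\alpha}_m(S^{n-1})\bigr)\times\bigl(C^{2,\alpha}_0(B_1)\times\mathbb R\times\mathbb R\bigr)$ into $C^{0,\alpha}(B_1)\times\mathbb R\times\mathbb R$, where $C^{2,\alpha}_0$ denotes functions vanishing on $\partial B_1$. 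Then $\mathcal F(0,0;\phi_1,\lambda_1,0)=0$. The key computation is the partial differential of $\mathcal F$ with respect to $(\psi,\bar\lambda,v_0)$ at the base point: it sends $(\dot\psi,\dot\lambda,\dot v_0)$ to $\bigl(\Delta\dot\psi+\lambda_1\dot\psi+\dot\lambda\,\phi_1,\ 2\int_{B_1}\phi_1\dot\psi,\ \dot v_0\,|\partial B_1|\bigr)$, using that $\partial_{v_0}\mathrm{Vol}\,B_{1+v_0+\bar v}=|\partial B_1|+\mathcal O(v)$ at $v=0$ and similarly only the $v_0$-derivative of the volume matters here. The third component is trivially an isomorphism in $\dot v_0$, decoupling it, so the real point is invertibility of $(\dot\psi,\dot\lambda)\mapsto(\Delta\dot\psi+\lambda_1\dot\psi+\dot\lambda\,\phi_1,\ 2\int\phi_1\dot\psi)$ on $C^{2,\alpha}_0(B_1)\times\mathbb R\to C^{0,\alpha}(B_1)\times\mathbb R$. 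This is the standard Lyapunov–Schmidt-type statement: since $\lambda_1$ is a simple eigenvalue with one-dimensional kernel spanned by $\phi_1$, the Fredholm alternative gives that $\Delta+\lambda_1$ is an isomorphism from the $L^2$-orthogonal complement of $\phi_1$ onto itself, and the extra unknown $\dot\lambda$ together with the extra equation $\int\phi_1\dot\psi=0$ exactly compensate the cokernel and the normalization direction. Hence the partial differential is an isomorphism.

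With the isomorphism established, the implicit function theorem in Banach spaces yields, for $\epsilon$ and $\|\bar v\|_{C^{2,\alpha}}$ small, unique solutions $\psi=\psi(p,\epsilon,\bar v)$, $\bar\lambda=\bar\lambda(p,\epsilon,\bar v)$, $v_0=v_0(p,\epsilon,\bar v)$ depending smoothly on $(\epsilon,\bar v)$, with the prescribed values at $\epsilon=0,\bar v\equiv 0$. Transporting $\psi$ back to $B^{\bar g}_{1+v}(p)$ gives $\bar\phi$; positivity of $\bar\phi$ for small parameters follows since $\bar\phi$ is $C^{2,\alpha}$-close to $\phi_1>0$ in the interior and has nonvanishing normal derivative near $\partial B_1$ by Hopf's lemma applied to $\phi_1$. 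Smooth dependence on $p$ is obtained either by noting that all coefficient expansions (via \eqref{expmetric}) are smooth in $p$, so one may run the IFT with $p$ as an additional parameter, or simply by the smoothness of the data. Finally, the estimate $v_0(p,\epsilon,0)=\mathcal O(\epsilon^2)$ comes from differentiating the volume constraint: at $\bar v=0$ one has $\mathrm{Vol}_{\bar g}B^{\bar g}_{1+v_0}(p)=\mathrm{Vol}\,B_1$, and by the volume expansion \eqref{expvol}–\eqref{W} (rescaled, so that $\mathrm{Vol}_{\bar g}B^{\bar g}_{1}(p)=\tfrac{\omega_n}{n}+W_0\epsilon^2+\mathcal O(\epsilon^4)$ with $W_0=-\tfrac{\omega_n}{6n(n+2)}R_p$), the first-order term in $v_0$ is forced to cancel the $\epsilon^2$ defect, giving $v_0=\mathcal O(\epsilon^2)$. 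The main obstacle is purely bookkeeping: carefully choosing the diffeomorphism $B_1\to B_{1+v}$ so that $L_{\epsilon,v}$ is genuinely smooth in $v\in C^{2,\alpha}$ (including regularity near the origin) and checking that the $L^2$-normalization in the moving metric $\bar g$ interacts cleanly with the IFT; the functional-analytic core — simplicity of $\lambda_1$ — is classical.
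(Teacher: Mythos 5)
The paper does not prove this proposition; it is imported verbatim from Pacard--Sicbaldi \cite{Pac-Sic} (as the attribution in the statement indicates) and used as a black box. So there is no ``paper's own proof'' here to compare against. That said, your implicit-function-theorem strategy is exactly the one used in \cite{Pac-Sic}, and the functional-analytic core --- simplicity of $\lambda_1$ forcing invertibility of the linearized operator after adjoining the normalization and the unknown $\bar\lambda$ --- is correct.

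One inaccuracy worth flagging: the partial differential you display, $(\dot\psi,\dot\lambda,\dot v_0)\mapsto\bigl(\Delta\dot\psi+\lambda_1\dot\psi+\dot\lambda\,\phi_1,\ 2\int_{B_1}\phi_1\dot\psi,\ \dot v_0\,|\partial B_1|\bigr)$, has spurious zeros in the $\dot v_0$ entries of the first two components. The operator $L_{\epsilon,v}$ and the pulled-back volume element $\mathrm{dvol}_{\bar g}$ both depend on $v_0$ (already at $\epsilon=0$, pulling back through the radial stretching $x\mapsto(1+v_0)x$ gives $L_{0,v_0}=(1+v_0)^{-2}\Delta$ and a Jacobian $(1+v_0)^n$, so $\partial_{v_0}$ of the PDE component is $-2\Delta\phi_1=2\lambda_1\phi_1\neq 0$ and $\partial_{v_0}$ of the normalization is $n\neq0$). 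This does not break your argument --- the Jacobian is block upper-triangular because the volume constraint involves neither $\psi$ nor $\bar\lambda$, so invertibility still reduces to the $2\times2$ block you treat and to $|\partial B_1|\neq0$ --- but you should present the matrix as triangular with nonzero off-diagonal entries rather than diagonal. Aside from that, the remaining steps (IFT, positivity of $\bar\phi$ by closeness to $\phi_1$ plus Hopf, smoothness in $p$ by including it as a parameter, and $v_0(p,\epsilon,0)=\mathcal O(\epsilon^2)$ from the volume expansion with $W_0\epsilon^2$) are all sound.
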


Instead of working on a domain depending on the function $v = v_0 + \bar v$, it will be more convenient to work on a fixed domain $B_1$ endowed with a metric depending on both $\epsilon$ and the function $v$. This can be achieved by considering the parametrization of $B^{\bar g}_{1+v}(p)$ given by 
\[
Y ( y) : =\mbox{Exp}_p^{\bar g} \left( \left(1 + v_0 + \chi(y)\, \bar v \left(\frac{y}{|y|} \right)  \right) \, \sum_i y^i \, E_i \right)
\]
where $\chi$ is a cutoff function identically equal to $0$ when $|y| \leq 1/2$ and identically equal to $1$ when $|y|Ê\geq 3/4$. Hence the coordinates we consider from now on are $y \in B_1$ with the metric $\hat g : = Y^* \bar g$.

\medskip
 
Up to some multiplicative constant, the problem we want to solve can now be rewritten in the form 
\begin{equation}
\label{formula-new}
\left\{
\begin{array}{rcccl}
	\Delta_{\hat{g}} \, \hat \phi + \hat \lambda \, \hat \phi & = & 0 & \textnormal{in} & B_1 \\[3mm]
	\hat \phi & = & 0 & \textnormal{on} & \partial B_1 
\end{array}
\right.
\end{equation}
with 
\begin{equation}
\label{formula-new-1}
\int_{B_1} \hat \phi^2 \, \mbox{dvol}_{\hat g} =1
\end{equation}
and 
\begin{equation}
\label{formula-new-2}
{\rm Vol}_{\hat g}Ê(B_1) ={\rm Vol}\, B_1
\end{equation}
When $\epsilon =0$ and $\bar v \equiv 0$, a solution of (\ref{formula-new}) is given by $\hat \phi = \phi_1$, $\hat \lambda = \lambda_1$ and $v_0 =0$.   In the general case, the relation between the function $\bar \phi$ and the function $\hat \phi$ is simply given by
\[
Y^* \bar \phi  =  \hat \phi  \qquad \mbox{and} \qquad  \bar \lambda =  \hat \lambda \, .
\]

\medskip

After canonical identification of $\partial B_{1+v}^{\bar g} (p)$ with $S^{n-1}$, we define the operator
\[
F (p, \epsilon, \bar v) =  \displaystyle \bar g (\nabla \bar \phi ,  \bar \nu )  \, |_{\partial B_{1+v}^{\bar g}}   - \frac{1}{{\rm Vol}_{{\bar g}_{\textnormal{in}}} (\partial B_{1+v}^{\bar g})} \, \int_{\partial B_{1+v}^{\bar g}}Ê \, \bar g (\nabla \bar \phi ,  \bar \nu ) \, \mbox{dvol}_{{\bar g}_{\textnormal{in}}}  \, ,
\]
where $\bar \nu$ denotes the unit normal vector field to $\partial B_{1+v}^{\bar g}$ and {$(\bar \phi, v_0)$} is the solution of (\ref{formula}) provided by the Proposition \ref{pr:1.2}. Recall that $v = v_0 + \bar v$. Schauder's estimates imply that {$F$} is well defined from a neighbourhood of $M \times \{0\} \times \{0\}$ in  $M \times [0,\infty) \times C^{2,\alpha}_m (S^{n-1})$ into $C_m^{1,\alpha}(S^{n-1})$ (the space $C_m^{1,\alpha}(S^{n-1})$ is naturally the space of functions in $C^{1,\alpha}(S^{n-1})$ whose mean is 0). Our aim is to find $(p,\epsilon,\bar v)$ such that $F(p,\epsilon,\bar v)=0$. Observe that, with this condition, $\bar \phi$ will be the solution to problem (\ref{a1}).

\medskip

We also have the alternative expression for $F$ using the coordinates of $B_1$ and the metric $\hat g$:
\[
F (p, \epsilon, \bar v) =  \displaystyle \left.\hat g (\nabla \hat \phi ,  \hat \nu )  \, \right|_{\partial B_{1}}   -\frac{1}{ {\rm Vol}_{{\hat g}_{\textnormal{in}}} (\partial B_{1}) } \,  \int_{\partial B_{1}}Ê \, \hat g (\nabla \hat \phi ,  \hat \nu ) \, \mbox{dvol}_{{\hat g}_{\textnormal{in}}} \, 
\]
where this time $\hat \nu$ is the  the unit normal vector field to $\partial B_{1}$ using the metric $\hat g$. 

\medskip 

For all $ \bar v \in \mathcal C^{2, \alpha}_m (S^{n-1})$ let $\psi$ be the (unique) solution of 
\begin{equation}\label{c00}
\left\{
\begin{array}{rcll}
	\displaystyle \Delta \psi + \lambda_{1}Ê\,Ê \psi & = & 0  & \textnormal{in} \qquad B_1 \\[3mm]
	\psi & = &  - \displaystyle  {c_1} \, \bar v  & \textnormal{on}\qquad \partial B_1 \, 
\end{array}
\right.
\end{equation}
which is $L^2(B_1)$-orthogonal to $\phi_1$, where $c_1 : = \left. \partial_r \phi_1\right|_{r=1}$. Define
\begin{equation}
H(\bar v) : = \left.\left( {\partial_r \psi }  + {c_2} \, \bar v \right) \, \right|_{\partial B_1}
\label{Hache}
\end{equation}
where $c_2 = \left.\partial^2_r \phi_{1}\right|_{r=1}$. We recall that the eigenvalues of the operator $-\Delta_{S^{n-1}}$ are given by $\mu_{j} = j \, (n-2+j)$
for $j \in \mathbb N$, and we denote by $V_j$ the eigenspace associated to $\mu_j$. 

\medskip

The following result shows that $H$ is the linearization of $F$ with respect to $\bar v$ at $\epsilon =0$ and $\bar v=0$:

\begin{Proposition} (Pacard -- Sicbaldi, \cite{Pac-Sic})
\label{H}
The operator obtained by linearizing $F$ with respect to $\bar v$ at $\epsilon =0$ and $\bar v=0$ is
\[
H : C^{2, \alpha}_m (S^{n-1}) \longrightarrow C^{1, \alpha}_m (S^{n-1})
\]
It is a self adjoint, first order elliptic operator. The kernel of $H$ is given by $V_1$. Moreover there exists $c >0$ such that 
\[
\| w \|_{\mathcal C^{2, \alpha}(S^{n-1})}  \leq c \, \| H(w)  \|_{\mathcal C^{1, \alpha}(S^{n-1})} \, ,
\]
provided $w$ is $L^2(S^{n-1})$-orthogonal to $V_0 \oplus V_1$.
\end{Proposition}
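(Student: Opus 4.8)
The plan is to identify the linearization of $F$ at $(p,0,0)$ by a direct shape-derivative computation in the Euclidean model, and then to read off the spectral properties of the resulting operator from a reduction to a family of radial ODEs. Since at $\epsilon=0$ the ambient metric is Euclidean and $B^{\bar g}_{1+v}(p)$ is the Euclidean ball $B_{1+v}$, I would fix $\bar v\in C^{2,\alpha}_m(S^{n-1})$, replace it by $t\,\bar v$, and differentiate at $t=0$ along the family $\Omega_t:=B_{1+v_0(p,0,t\bar v)+t\bar v}$. Because $\bar v$ has zero mean the first order variation of $\textnormal{Vol}(B_{1+t\bar v})$ is $\int_{S^{n-1}}\bar v=0$, so $v_0(p,0,t\bar v)=\mathcal O(t^2)$, and to first order $\Omega_t=B_{1+t\bar v}$, a deformation whose normal velocity on $\partial B_1$ equals $\bar v$. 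Writing $u_t,\lambda_t$ for the first eigenpair of $-\Delta$ on $\Omega_t$ normalised in $L^2(\Omega_t)$, Proposition \ref{lambda} gives $\left.\tfrac{d}{dt}\lambda_t\right|_{t=0}=-c_1^{2}\int_{S^{n-1}}\bar v=0$, where $c_1=\left.\partial_r\phi_1\right|_{r=1}$. Hence the shape derivative $u':=\left.\partial_t u_t\right|_{t=0}$ solves $\Delta u'+\lambda_1 u'=0$ in $B_1$; the Dirichlet condition on the moving boundary forces $u'=-c_1\,\bar v$ on $\partial B_1$, and differentiating the normalisation $\int_{\Omega_t}u_t^2=1$ yields $\int_{B_1}\phi_1\,u'=0$ (the boundary contribution vanishes since $\phi_1=0$ on $\partial B_1$). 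By uniqueness in \eqref{c00}, $u'=\psi$.

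I would then differentiate the Neumann datum along $\Omega_t$. Writing a boundary point as $q_t(\theta)=(1+t\,\bar v(\theta))\,\theta$ and splitting $\left.\tfrac{d}{dt}\right|_{t=0}(\nabla u_t\cdot\nu_t)(q_t(\theta))$ into three contributions — that of the shape derivative, $\nabla\psi\cdot\theta=\left.\partial_r\psi\right|_{r=1}$; that of the variation of the unit normal, $\nabla\phi_1\cdot\dot\nu=0$ because $\dot\nu$ is tangent to $S^{n-1}$ while $\nabla\phi_1$ is radial on $\partial B_1$; and the transport term $\left.\partial_r^2\phi_1\right|_{r=1}\,\bar v=c_2\,\bar v$ coming from moving the evaluation point radially — one obtains exactly $\left.\partial_r\psi\right|_{\partial B_1}+c_2\,\bar v=H(\bar v)$. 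Since $\psi$ carries no $V_0$-component (its boundary datum $-c_1\bar v$ has none), $H(\bar v)$ has zero mean; and because at $t=0$ the Neumann datum is constant, subtracting its average in $F$ affects only the average part, which is now zero. Therefore the linearization of $F$ with respect to $\bar v$ at $(p,0,0)$ is precisely $H$.

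For the spectral statements, self-adjointness with respect to $L^2(S^{n-1})$ follows from Green's identity applied to the solutions $\psi_{\bar v},\psi_w$ of \eqref{c00} associated to $\bar v,w\in C^{2,\alpha}_m$: using $\Delta\psi=-\lambda_1\psi$ one gets $\int_{S^{n-1}}w\,\partial_r\psi_{\bar v}=\int_{S^{n-1}}\bar v\,\partial_r\psi_w$, hence $\langle H\bar v,w\rangle=\langle\bar v,Hw\rangle$; testing the same identity against $\phi_1$ shows that \eqref{c00} is solvable exactly because $\bar v$ has zero mean. First-order ellipticity holds because $\bar v\mapsto\partial_r\psi_{\bar v}$ is $(-c_1)$ times the Dirichlet-to-Neumann operator of $\Delta+\lambda_1$ on $B_1$, a classical first-order elliptic pseudodifferential operator with principal symbol $|\xi|$, while $c_2\,\bar v$ is of order zero; since $c_1<0$ the principal symbol of $H$ is positive. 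For the kernel, restriction to the eigenspace $V_j$, $j\ge1$, and separation of variables give $\psi_{\bar v}=-c_1\,(\zeta_j(r)/\zeta_j(1))\,\bar v$, where $\zeta_j$ is the solution regular at the origin of $\zeta_j''+\tfrac{n-1}{r}\zeta_j'+(\lambda_1-\mu_j/r^2)\zeta_j=0$; hence $H|_{V_j}=h_j\,\mathrm{Id}$ with $h_j=-c_1\,(\zeta_j'(1)/\zeta_j(1)+(n-1))$, using $c_2=-(n-1)\,c_1$ read off the equation for $\phi_1=\zeta_0$. Since $\zeta_1$ is proportional to $\phi_1'$ one has $\zeta_1'(1)/\zeta_1(1)=c_2/c_1=-(n-1)$, so $h_1=0$ and $V_1\subset\ker H$ — as is also clear from the invariance of $\lambda$ and of the overdetermined condition under translations of $B_1$, whose boundary velocities span $V_1$. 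Finally, since $\lambda_1$ is the simple first Dirichlet eigenvalue with radial eigenfunction, $\zeta_\mu$ does not vanish on $(0,1]$ for $\mu>0$, and a Sturm comparison shows that $\mu\mapsto\zeta_\mu'(1)/\zeta_\mu(1)$ is strictly increasing, so $h_j>0$ for $j\ge2$ and $\ker H=V_1$ (this is the rigidity in Faber--Kr\"ahn). Being self-adjoint, first-order elliptic with $\ker H=V_1$, the operator $H$ restricts to an isomorphism of $C^{2,\alpha}(S^{n-1})\cap(V_0\oplus V_1)^\perp$ onto $C^{1,\alpha}(S^{n-1})\cap(V_0\oplus V_1)^\perp$, and the a priori bound follows from the Schauder estimate for $H$ together with a standard compactness argument absorbing the lower-order term.

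The main obstacle is this last spectral point: proving $h_j\ne0$ for every $j\ge2$, that is, the strict monotonicity of the boundary log-derivatives $\zeta_j'(1)/\zeta_j(1)$ of the radial solutions (equivalently, the nondegeneracy of the second variation of $\lambda$ at the ball in directions transverse to translations). The shape-derivative identification of $H$, the self-adjointness, the ellipticity and the final a priori estimate are then routine.
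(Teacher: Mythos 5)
The paper recalls this proposition from \cite{Pac-Sic} without giving a proof, so there is no in-text argument to compare yours against. That said, your argument is the standard proof of this statement and it is correct in every essential respect.

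Your identification of the linearization is clean: at $\epsilon=0$ the volume-normalizing constant $v_0$ contributes only at order $t^2$, the normal velocity of $\partial\Omega_t$ is $\bar v$, and the split of $\left.\tfrac{d}{dt}\right|_{t=0}\bigl(\nabla u_t\cdot\nu_t\bigr)(q_t)$ into the Eulerian derivative ($\partial_r\psi$), the transport term ($D^2\phi_1(\theta)[\bar v\theta,\theta]=c_2\bar v$), and the normal-variation term (which vanishes because $\dot\nu\perp\nu_0$ while $\nabla\phi_1$ is radial) is exactly right, and reproduces $H$. The normalization argument that forces $u'=\psi$ is also correct. For the spectral part, the radial reduction $H|_{V_j}=h_j\,\mathrm{Id}$ with $h_j=-c_1\bigl(\zeta_j'(1)/\zeta_j(1)+(n-1)\bigr)$ is right, $\zeta_1\propto\phi_1'$ gives $h_1=0$, and the strict monotonicity of $\mu\mapsto\zeta_\mu'(1)/\zeta_\mu(1)$, which you correctly single out as the only nonroutine step, indeed follows from a Wronskian argument: with $W=\zeta_\mu\zeta_{\mu'}'-\zeta_{\mu'}\zeta_\mu'$ one has $(r^{n-1}W)'=(\mu'-\mu)\,r^{n-3}\zeta_\mu\zeta_{\mu'}>0$ on $(0,1]$ once one knows (from simplicity of $\lambda_1$ and scaling) that $\zeta_\mu>0$ there, which gives $h_j>0$ for $j\ge 2$ since $c_1<0$. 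The conclusion $\ker H=V_1$, the self-adjointness via Green's identity, the first-order ellipticity via the Dirichlet-to-Neumann symbol $|\xi|$, and the resulting Schauder-type estimate on $(V_0\oplus V_1)^\perp$ all follow as you say. This matches the approach of the original reference.
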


\medskip

Using the previous proposition, the implicit function theorem gives directy the following:

\begin{Proposition}\label{solmodnoyau} (Pacard -- Sicbaldi, \cite{Pac-Sic})
There exists $\epsilon_0 >0$ such that, for all $\epsilon \in [0, \epsilon_0]$ and for all $p \in M$, there exists a unique function $\bar v = \bar v(p, \epsilon) \in C^{2,\alpha}_m(S^{n-1})$, orthogonal to $V_0 \oplus V_1$, and a vector $a  = a (p, \epsilon) \in \mathbb R^n$ such that 
\begin{equation}\label{Fa}
F (p, \epsilon, \bar v )  + \langle a , \cdot \rangle = 0 
\end{equation}
The function $\bar v$  and the vector $a$ depend smoothly on $p$ and $\epsilon$ and we have 
\[
|Êa | + \| \bar v \|_{\mathcal C^{2, \alpha}(S^{n-1})} \leq c \, \epsilon^2
\]
\end{Proposition}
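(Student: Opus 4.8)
The plan is to solve equation~\eqref{Fa} by the implicit function theorem, using Proposition~\ref{H} to control the kernel of the linearization. Since every element of $C^{2,\alpha}_m(S^{n-1})$ already has zero mean, a function in $C^{2,\alpha}_m$ that is $L^2$-orthogonal to $V_1$ is the same thing as a function orthogonal to $V_0\oplus V_1$; write $X$ for the closed subspace of such functions, and recall that $V_1$ is canonically identified with $\{\langle a,\cdot\rangle|_{S^{n-1}}:a\in\mathbb R^n\}$. Consider the map
\[
G(p,\epsilon,\bar v,a) := F(p,\epsilon,\bar v) + \langle a,\cdot\rangle ,
\]
defined for $p\in M$, $\epsilon\geq 0$ small, $\bar v\in X$ small and $a\in\mathbb R^n$ small, with values in $C^{1,\alpha}_m(S^{n-1})$. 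It is smooth by the smooth dependence of $F$ recorded before Proposition~\ref{H} (a consequence of Proposition~\ref{pr:1.2} and Schauder estimates), and since for $\epsilon=0$ the rescaled ball is Euclidean and $\phi_1$ is radial, one has $F(p,0,0)=0$, hence $G(p,0,0,0)=0$.

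First I would compute the differential of $G$ with respect to $(\bar v,a)$ at $(p,0,0,0)$, namely the operator $(\dot v,\dot a)\mapsto H(\dot v)+\langle\dot a,\cdot\rangle$ from $X\times\mathbb R^n$ into $C^{1,\alpha}_m(S^{n-1})$, and show it is an isomorphism. Indeed, by Proposition~\ref{H} the operator $H$ is self-adjoint, elliptic of first order, with kernel $V_1$; hence its image is the $L^2$-orthogonal complement of $V_1$ inside $C^{1,\alpha}_m$, and the a priori estimate quoted there shows that $H$ restricts to a Banach-space isomorphism of $X$ onto that complement. Given $f\in C^{1,\alpha}_m$, decompose $f=f_1+f^\perp$ with $f_1\in V_1$ and $f^\perp\perp V_1$: then $\dot a$ is the unique vector with $\langle\dot a,\cdot\rangle=f_1$ and $\dot v\in X$ the unique solution of $H(\dot v)=f^\perp$. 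With the isomorphism in hand, the implicit function theorem produces $\epsilon_0>0$ and smooth maps $(p,\epsilon)\mapsto(\bar v(p,\epsilon),a(p,\epsilon))$, unique in a neighbourhood, with $\bar v(p,0)=0$, $a(p,0)=0$, solving $G(p,\epsilon,\bar v,a)=0$, i.e.~\eqref{Fa}. The uniformity of $\epsilon_0$ and of the constants in $p$ comes from the fact that all quantities are built from finitely many derivatives of the metric in geodesic normal coordinates, which are locally uniformly controlled (and automatic under the compactness assumption used for the applications).

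It then remains to prove the bound $|a|+\|\bar v\|_{C^{2,\alpha}}\leq c\,\epsilon^2$, which reduces to showing $\|F(p,\epsilon,0)\|_{C^{1,\alpha}(S^{n-1})}\leq c\,\epsilon^2$. This is where the expansions of Section~3 enter: by~\eqref{expmetric} the metric $\hat g$ differs from the Euclidean one only at order $\epsilon^2$, by Proposition~\ref{pr:1.2} one has $v_0(p,\epsilon,0)=\mathcal O(\epsilon^2)$, and by~\eqref{phigeod} the first eigenfunction agrees with $\phi_1$ up to an $\epsilon^2$ correction whose non-radial part is explicit; hence the Neumann datum $\hat g(\nabla\hat\phi,\hat\nu)$ deviates from its own mean by $\mathcal O(\epsilon^2)$ in $C^{1,\alpha}$. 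Writing the equation $G=0$ as $H(\bar v)+\langle a,\cdot\rangle = -F(p,\epsilon,0)-N(p,\epsilon,\bar v)$ with $N(p,\epsilon,\bar v):=F(p,\epsilon,\bar v)-F(p,\epsilon,0)-H(\bar v)$ the nonlinear remainder, which satisfies $\|N(p,\epsilon,\bar v)\|_{C^{1,\alpha}}\leq C(\epsilon+\|\bar v\|_{C^{2,\alpha}})\,\|\bar v\|_{C^{2,\alpha}}$, and applying the inverse of the linearized operator, one gets $|a|+\|\bar v\|_{C^{2,\alpha}}\leq c\big(\|F(p,\epsilon,0)\|_{C^{1,\alpha}}+(\epsilon+\|\bar v\|_{C^{2,\alpha}})\|\bar v\|_{C^{2,\alpha}}\big)$; absorbing the last term for $\epsilon$ small yields $|a|+\|\bar v\|_{C^{2,\alpha}}\leq c'\,\epsilon^2$. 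The step I expect to be the main point (routine though it is, given Proposition~\ref{H}) is the identification of $V_1$ with the exact cokernel of $H$, so that appending the finite-dimensional term $\langle a,\cdot\rangle$ restores surjectivity of the linearization; everything else is bookkeeping with the Section~3 expansions to extract the $\epsilon^2$ estimate.
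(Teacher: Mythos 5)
Your proof is correct and follows exactly the route the paper indicates: the paper gives no detailed proof, merely noting that the result follows from Proposition~\ref{H} via the implicit function theorem (citing \cite{Pac-Sic}), and your argument supplies the standard details of that application. In particular, the key observations — that $X\times\mathbb{R}^n\ni(\dot v,\dot a)\mapsto H(\dot v)+\langle\dot a,\cdot\rangle$ is an isomorphism onto $C^{1,\alpha}_m(S^{n-1})$ because $V_1$ is both the kernel of the self-adjoint operator $H$ and exactly the finite-dimensional defect of its range, and that the $\mathcal{O}(\epsilon^2)$ bound comes from $F(p,\epsilon,0)=\mathcal{O}(\epsilon^2)$ (confirmed later in the paper in the proof of Proposition~\ref{propvbar}) combined with the a~priori estimate for the linearization — are precisely what is needed.
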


\medskip

In other word, for every point $p \in M$ it is possible to perturbe the small ball $B^{\bar g}_1(p)$ in a domain $B^{\bar g}_{1+v}(p)$, whose volume did not change, but with the (strong) property that $F(p,\epsilon, \bar v)$ (i.e. the Neumann data of its first eigenfunction minus its mean) is the restriction of a linear function $\langle a , \cdot \rangle$ on $S^{n-1}$. It is important to underline that this result does not depend on the geometry of the manifold, because it is true for every point $p$. 

\medskip 

Now, we have to find the good point $p$ for which such linear function $\langle a , \cdot \rangle$ is the 0 function. And in this research we will see the geometry of the manifold.

\section{Construction of small extremal domains}

For $p \in M$, let us define the function
$$
\Psi_\epsilon(p):=\hat\lambda = \hat \lambda(p, \epsilon, \bar v(p, \epsilon))
$$
where $\hat \lambda$ is given by (\ref{formula-new}) taking $\bar v = \bar v(p,\epsilon)$ given by Proposition \ref{solmodnoyau}.

\begin{Proposition}\label{critique}
For $\epsilon$ small enough, the domain $B_{1+v(p,\epsilon)}^{\bar g} (p)$ is extremal if and only if $p$ is a critical point of $\Psi_{\epsilon}$, where $v(p, \epsilon) = v_0(p, \epsilon, \bar v(p, \epsilon)) + \bar v(p, \epsilon)$.
\end{Proposition}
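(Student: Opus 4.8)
The strategy is to relate the critical points of $\Psi_\epsilon$ to the vanishing of the vector $a = a(p,\epsilon)$ appearing in Proposition~\ref{solmodnoyau}, and then to show that $B_{1+v(p,\epsilon)}^{\bar g}(p)$ is extremal precisely when $a(p,\epsilon) = 0$. Indeed, by the variational characterization recalled after Proposition~\ref{lambda}, the domain is extremal if and only if the first eigenfunction has constant Neumann data on the boundary, i.e. if and only if $F(p,\epsilon,\bar v(p,\epsilon)) = 0$. By the identity $F(p,\epsilon,\bar v(p,\epsilon)) + \langle a(p,\epsilon), \cdot\rangle = 0$ from \eqref{Fa}, and since $\langle a, \cdot\rangle$ restricted to $S^{n-1}$ is nonzero as soon as $a\neq 0$, we get that the domain is extremal if and only if $a(p,\epsilon)=0$. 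So the whole proposition reduces to the equivalence: $p$ is a critical point of $\Psi_\epsilon$ $\iff$ $a(p,\epsilon)=0$.

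To prove this equivalence I would differentiate $\Psi_\epsilon(p) = \hat\lambda(p,\epsilon,\bar v(p,\epsilon))$ with respect to a variation of the point $p$. Moving $p$ along a curve in $M$ generates a deformation of the domain $B_{1+v(p,\epsilon)}^{\bar g}(p)$ by a vector field $\Xi$; since $v_0$ is chosen (in Proposition~\ref{pr:1.2}) to keep the $\bar g$-volume fixed, this is a volume-preserving deformation. Applying Proposition~\ref{lambda} (the Garabedian--Schiffer / El~Soufi--Ilias first variation formula) in the dilated metric $\bar g$, one obtains
\[
d_p \Psi_\epsilon \cdot \dot p = -\int_{\partial B_{1+v}^{\bar g}(p)} \big(\bar g(\nabla^{\bar g}\bar\phi,\bar\nu)\big)^2\, \bar g(\Xi,\bar\nu)\, \mathrm{dvol}_{\bar g_{\mathrm{in}}}.
\]
Now $\bar g(\nabla^{\bar g}\bar\phi,\bar\nu) = F(p,\epsilon,\bar v) + \text{(its mean)} = -\langle a,\cdot\rangle + \text{const}$ on the boundary, which is a small perturbation (of size $\mathcal{O}(\epsilon^2)$, plus the $\mathcal{O}(1)$ constant $c_1$) of the constant $c_1 = \partial_r\phi_1|_{r=1}$. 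Squaring, the integrand's leading factor $\big(\bar g(\nabla^{\bar g}\bar\phi,\bar\nu)\big)^2$ equals $c_1^2 + \mathcal{O}(\epsilon^2)$. Meanwhile $\bar g(\Xi,\bar\nu)$ on $\partial B_1^{\bar g}(p)$ is, to leading order, the restriction to $S^{n-1}$ of the linear function $\langle\dot p,\cdot\rangle$ induced by the displacement of the center (in normal coordinates, translating the center of a geodesic ball moves the boundary by a function in $V_1$). Hence
\[
d_p\Psi_\epsilon\cdot\dot p = -c_1^2 \int_{S^{n-1}} \langle a,\theta\rangle\,\langle\dot p,\theta\rangle\, d\theta + (\text{higher order}) = -\frac{c_1^2\,\omega_n}{n}\,\langle a,\dot p\rangle + \mathcal{O}(\epsilon^2)\,|a|\,|\dot p| + \cdots,
\]
using the orthogonality of linear functions on $S^{n-1}$ and the fact that $\bar v(p,\epsilon)$ is $L^2$-orthogonal to $V_1$ so it contributes nothing to the pairing against $V_1$ at leading order. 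This shows $\nabla\Psi_\epsilon(p)$ is, up to an invertible (for $\epsilon$ small) linear map plus controlled lower-order terms, a nonzero multiple of $a(p,\epsilon)$; therefore $d_p\Psi_\epsilon = 0 \iff a(p,\epsilon)=0$.

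The main obstacle I expect is making the term "$\bar g(\Xi,\bar\nu)$ is to leading order $\langle\dot p,\cdot\rangle$" rigorous, together with controlling all the error terms uniformly in $p$ and $\epsilon$: one must track how the cutoff parametrization $Y$, the volume-correction $v_0$, and the small perturbation $\bar v(p,\epsilon)$ all depend on $p$ and feed into $\Xi$, and verify that the cross terms and the higher spherical harmonics really are $\mathcal{O}(\epsilon^2)$ smaller than the $V_1$ contribution — so that the linear map $\dot p \mapsto d_p\Psi_\epsilon\cdot\dot p$ has the form "(invertible) $\circ$ ($a$-dependence)" and no spurious kernel appears. Once that is in place, the biconditional follows, and combined with the reduction in the first paragraph the proposition is proved. (This is exactly the point where the argument parallels, and must adapt, the corresponding step in \cite{Pac-Xu}.)
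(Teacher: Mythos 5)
Your overall strategy — reduce to the equivalence ``extremal $\iff a(p,\epsilon)=0$'', then compute $D_p\Psi_\epsilon$ via Proposition~\ref{lambda} and show this vanishes precisely when $a=0$ — is exactly the paper's. But the central computation in your proposal has an error, and the error hides the mechanism that actually makes the argument close.

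You write that $\big(\hat g(\hat\nabla\hat\phi,\hat\nu)\big)^2$ has leading factor $c_1^2+\mathcal O(\epsilon^2)$, and that $\hat g(\hat Z,\hat\nu)$ is to leading order $\langle\dot p,\theta\rangle$, and then ``hence'' $d_p\Psi_\epsilon\cdot\dot p=-c_1^2\int_{S^{n-1}}\langle a,\theta\rangle\langle\dot p,\theta\rangle\,d\theta+\dots$. This does not follow: the $c_1^2$ leading term of the square contains no $\langle a,\theta\rangle$, and the product $c_1^2\cdot\langle\dot p,\theta\rangle$ integrates to zero. The $a$-dependence has to come from expanding the square as $\big(b-\langle a,\cdot\rangle\big)^2=b^2-2b\langle a,\cdot\rangle+\langle a,\cdot\rangle^2$ with $b\to c_1$; the constant part $b^2$ drops out and the leading surviving term is the cross term $-2b\langle a,\cdot\rangle$, so the coefficient of $\langle a,\dot p\rangle$ is on the order of $2c_1\,\omega_n/n$, not $-c_1^2\,\omega_n/n$. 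As written, your formula introduces a factor $\langle a,\theta\rangle$ that has no provenance in your own expansion.

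More seriously, you justify dropping the constant piece only by ``orthogonality of linear functions on $S^{n-1}$''. That is a leading-order, Euclidean statement: both $\hat g(\hat Z,\hat\nu)-\langle\dot p,\theta\rangle$ and $\mathrm{dvol}_{\hat g_{\rm in}}-\mathrm{dvol}_{S^{n-1}}$ are only $\mathcal O(\epsilon)$, so $b^2\int_{\partial B_1}\hat g(\hat Z,\hat\nu)\,\mathrm{dvol}_{\hat g_{\rm in}}$ would a priori be $\mathcal O(\epsilon)\,|\dot p|$ — which dominates the $\mathcal O(\epsilon^2)\,|\dot p|$ contribution carrying $a$. This is exactly where the paper instead invokes the \emph{exact} identity $\int_{\partial B_1}\hat g(\hat Z,\hat\nu)\,\mathrm{dvol}_{\hat g_{\rm in}}=0$, which holds because the deformation is volume preserving by the construction of $v_0$. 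Without that exact cancellation the argument does not close, and that is the single ingredient your write-up is missing. Once you insert it, your ``invertible linear map in $a$'' conclusion is a valid (and slightly different) packaging of what the paper does — the paper instead tests with the specific vector $\Xi=b\,a$ and derives an inequality $b^2\|a\|^2\le C|b|(\epsilon\|a\|+\|a\|+\epsilon)\|a\|^2$, then uses $b\neq0$ and $\|a\|=\mathcal O(\epsilon^2)$ to force $a=0$; both routes work, but both hinge on the exact volume-preserving cancellation you glossed over.
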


\begin{proof}
Recall that by definition 
$$
F(p,\epsilon,\bar v(p,\epsilon))=\hat{g} (\hat{\nu},\hat {\nabla}\hat{\phi})-b
$$
where
$$
b= b(p, \epsilon) := \frac 1{ {\rm Vol}_{{\hat g}_{\textnormal{in}}} (\partial B_{1}) }\int_{\partial {B}_1}\hat{g} (\hat{\nu},\hat {\nabla}\hat{\phi})\, \mbox{dvol}_{{\hat g}_{\textnormal{in}}}
$$
and
$$
\int_{\partial {B}_1}F\, \mbox{dvol}_{{\hat g}_{\textnormal{in}}}=0 \, .
$$
Moreover we know that
$$
F(p,\epsilon,\bar v(p,\epsilon))+\langle a(p,\epsilon),\cdot \rangle=0.
$$
In particular the domain $B_{1+v(p,\epsilon)}^{\bar g} (p)$ is extremal if and only if $a(p,\epsilon)=0$.

\medskip

Let us now compute the differential of $\Psi_{\epsilon}$.
Let $\Xi\in T_pM$ and $$q:=\mbox{Exp}_p(t\Xi).$$
For $t$ small enough, the boundary of $B_{1+v(q,\epsilon)}^{\bar g} (q)$ can be written as a normal graph
over the boundary of $B_{1+v(p,\epsilon)}^{\bar g} (p)$ for some function $f$, depending on $p,\epsilon,t$ and $\Xi$, and smooth on $t$.
This defines a vector field on $\partial B_{1+v(p,\epsilon)}^{\bar g} (p)$ by
$$
Z:=\left.\frac{\partial f}{\partial t}\right|_{t=0}\, \bar{\nu}
$$
where $\bar{\nu}$ is the normal of $\partial B_{1+v(p,\epsilon)}^{\bar g} (p)$.
Let $X$ be the parallel transported of $\Xi$ from geodesic issued from $p$. As the metric $\bar g$ is close to the Euclidean one for $\epsilon$ small, there exists a constant $c$ such that for all $\epsilon$ small enough and any $\Xi$ the estimation 
$$
\|Z-X\|_{\bar{g}}\leq c\|\Xi\|_{\bar{g}}.
$$
holds.
The  variation of the first eigenvalue, see Proposition \ref{lambda}, gives
$$
D_p\Psi_\epsilon(\Xi)= \left.\frac{\mbox{d}}{\mbox{d}t}\right|_{t=0}\, \Psi_\epsilon(q)=  -\int_{\partial B_1}[\hat{g}(\hat\nabla\hat\phi,\hat\nu)]^2\, \hat g(\hat Z,\hat\nu)\, \mbox{dvol}_{{\hat g}_{\textnormal{in}}}.
$$
We thus obtain
\begin{equation}\label{dpsi}
D_p\Psi_\epsilon(\Xi)=-\int_{\partial B_1}[-\langle a(\epsilon,p),\cdot\rangle+b]^2\, \hat g(\hat Z,\hat\nu)\, \mbox{dvol}_{{\hat g}_{\textnormal{in}}}
\end{equation}
Recall that the variation we made is volume preserving, i.e.
$$
\int_{\partial B_1}\hat g(\hat Z,\hat\nu)\, \mbox{dvol}_{{\hat g}_{\textnormal{in}}}=0 \, .
$$
Then it is easy to see that if $a=0$ then $D_p\Psi_\epsilon=0$. This proves one implication.

\medskip

For the reverse implication, assume now that $D_p\Psi_\epsilon=0$. From (\ref{dpsi}) we have
\begin{equation}
\label{equaint}
2b\, \int_{\partial B_1}\langle a(\epsilon,p),\cdot\rangle\, \hat g(\hat Z,\hat\nu)\, \mbox{dvol}_{{\hat g}_{\textnormal{in}}}=\int_{\partial B_1}\langle a(\epsilon,p),\cdot\rangle^2\,\hat g(\hat Z,\hat\nu)\,\mbox{dvol}_{{\hat g}_{\textnormal{in}}}
\end{equation}
for all $\Xi$. 
It is easy to see that for all $\epsilon$ small enough there exists a constant $c$ such that
$$
|\hat g(\hat Z,\hat\nu)-\langle \Xi,\cdot\rangle|\leq c\, \epsilon\, \|\Xi\|_g
$$
(in fact the left hand side vanishes when $\epsilon=0$, the metric $\hat g$ and the Euclidean one differ by terms of order $\epsilon^2$ and the normal vectors differ by terms of order $\epsilon$).
Now we choose $\Xi=b\, a = b(p, \epsilon) \, a(p, \epsilon)$ and we get
$$
\hat g(\hat Z,\hat\nu)=b\, \langle a,\cdot \rangle +\epsilon\, A
$$
where $|A|\leq c\,\|ba\|_g$.
Using this equality in equation~(\ref{equaint}), we deduce that for all $\epsilon$ small enought there exists a constant $C$ independent on $\epsilon$ and $a$ such that
$$
2b^2\, \int_{\partial B_1}\langle a,\cdot \rangle^2\, \mbox{dvol}_{{\hat g}_{\textnormal{in}}}\leq C\, |b|\, (\epsilon\, \|a\|^3+\|a\|^3+\epsilon\|a\|^2) \, .
$$
Now the left hand side is bounded by below by $b^2\, \|a\|^2$, so finally we obtain
$$
b^2\, \|a\|^2\leq C\, |b|\, (\epsilon\,  \|a\|+\|a\|+\epsilon)\, \|a\|^2 \, .
$$
Observe that we cannot have  $b=0$ because it would imply that 
$$
\hat\lambda\int_{ B_1}\hat\phi\;\mbox{dvol}_{\hat g}=\int_{B_1}\Delta_{\hat g}\hat\phi\;\mbox{dvol}_{\hat g}=b=0
$$
but $\hat\phi>0$ on $B_1$ so  $\hat\lambda=0$ which is not possible because it is the first eigenvalue of the Laplace-Beltrami operator with 0 Dirichlet boundary condition. As $\|a\| = \mathcal{O}(\epsilon^2)$, then for $\epsilon$ small (recall $b\neq0$) we obtain that $a=0$ and this concludes the proof of the proposition. 
\end{proof}

We now define 
$$
\Phi(p,\epsilon)=-\frac{6\, n\, (n+2)}{n\, (n+2)+2\lambda_1} \, \frac{\Psi_\epsilon(p)-\lambda_1}{\epsilon^2} \, .
$$
Propositions \ref{solmodnoyau} and \ref{critique} completes the proof of the first part of Theorem~\ref{maintheorem}. In the following sections, we will prove the second and the third parts of Theorem~\ref{maintheorem}, and for this we have to find an expansion in power of $\epsilon$ for $\Psi_\epsilon(p)$. Such expansion will involve the geometry of the manifold. 

\section{Expansion of the first eigenvalue on perturbations of small geodesic balls}


In this section we want to find an expansion of the first eigenvalue $\hat \lambda = \hat \lambda(p, \epsilon, \bar v)$ in power of $\epsilon$ and $\bar v$, where $p$ is fixed in $M$. In a second time, we will use the function $\bar v = \bar v(p, \epsilon)$ given by Proposition \ref{solmodnoyau} in order to find an expansion of $\hat \lambda(p, \epsilon, \bar v(p, \epsilon))$ in power of $\epsilon$.
Keeping in mind that we will have $\bar v=\mathcal{O}(\epsilon^2)$ we write formally
\[
\begin{array}{lll}
\displaystyle \hat \lambda (p,\epsilon,\bar v) & = & \hat \lambda (p,0,0)+\partial_\epsilon\hat \lambda (p,0,0)\,\epsilon\\[4mm]
&&\displaystyle+ \partial_{\bar v}\hat \lambda (p,0,0)\, \bar v+\frac12\, \partial_\epsilon^2\hat \lambda (p,0,0)\, \epsilon^2\\[4mm]
&   &\displaystyle+\partial_\epsilon\partial_{\bar v}\hat \lambda (p,0,0)\, \epsilon\, \bar v
+\frac16\, \partial_\epsilon^3\hat \lambda (p,0,0)\, \epsilon^3\\[4mm]

&&\displaystyle+\frac12\, \partial_{\bar v}^2\hat \lambda (p,0,0)\, \bar v^2+\frac12\, \partial_\epsilon^2\partial_{\bar v}\hat \lambda (p,0,0)\, \epsilon^2\, \bar v
+\frac1{24}\, \partial_\epsilon^4\hat \lambda (p,0,0)\, \epsilon^4\\[4mm]
&&\displaystyle+\mathcal{O}(\epsilon^5)
\end{array}
\]

We thus study all of theses terms.

\begin{Lemma}\label{exp1} 
We have 
\[
\begin{array}{lll}
\partial_\epsilon\hat \lambda (p,0,0) &=&0 \\[4mm]
\displaystyle \frac12\, \partial_\epsilon^2 \hat \lambda (p,0,0) &=& \displaystyle - \frac{R_p}{6}\, \left(1 + 2\,\frac{ \lambda_1}{ n\, (n+2)}\right) = : \hat \Lambda_0\\[4mm]
\partial_\epsilon^3 \hat \lambda (p,0,0)& = &0\\[4mm]
\displaystyle \frac1{24}\, \partial_\epsilon^4 \hat \lambda (p,0,0) & = &  \displaystyle \Lambda + \lambda_1\, \left( \frac{2\, W}{\omega_n} - \frac{R_p^2}{36\, n^2\, (n+2)} \right) =: \hat \Lambda
\end{array}
\]
where 
the constants $\Lambda$ and $W$ are given in (\ref{W}) and (\ref{constL}).
\end{Lemma}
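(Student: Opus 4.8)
The strategy is to treat $\hat\lambda(p,\epsilon,0)$ as the first Dirichlet eigenvalue of the Laplace--Beltrami operator on the \emph{fixed} ball $B_1$ endowed with the pulled-back metric $\hat g=Y^*\bar g$, where at $\bar v=0$ the map $Y$ is simply $y\mapsto \mathrm{Exp}_p^{\bar g}((1+v_0(p,\epsilon,0))\sum_i y^iE_i)$. Since $\bar g=\epsilon^{-2}g$, the eigenvalue $\hat\lambda(p,\epsilon,0)$ equals $\bar\lambda$, which in turn equals $\epsilon^2\,\lambda_{B^g_{\epsilon(1+v_0)}(p)}$. Thus everything reduces to the known expansion \eqref{expeig} of the first Dirichlet eigenvalue of a \emph{genuine} geodesic ball, composed with the expansion of the radius-correction $1+v_0(p,\epsilon,0)$, which by Proposition~\ref{pr:1.2} satisfies $v_0(p,\epsilon,0)=\mathcal O(\epsilon^2)$.

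First I would write $\rho(\epsilon):=\epsilon(1+v_0(p,\epsilon,0))$ and use \eqref{expeig}, namely $\rho^2\lambda_{B^g_\rho(p)}=\lambda_1+\Lambda_0\rho^2+\Lambda\rho^4+\mathcal O(\rho^5)$, to get
\[
\hat\lambda(p,\epsilon,0)=\epsilon^2\lambda_{B^g_\rho(p)}=\frac{\epsilon^2}{\rho^2}\Big(\lambda_1+\Lambda_0\rho^2+\Lambda\rho^4+\mathcal O(\rho^5)\Big).
\]
Since $\epsilon^2/\rho^2=(1+v_0)^{-2}=1-2v_0+\mathcal O(v_0^2)$ and $v_0=\mathcal O(\epsilon^2)$, the only place the correction enters at order $\epsilon^4$ is through $-2v_0\,\lambda_1$; hence I need the leading term of $v_0(p,\epsilon,0)$. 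This is obtained from the volume constraint $\mathrm{Vol}_{\bar g}B^{\bar g}_{1+v_0}(p)=\mathrm{Vol}\,B_1$, i.e. $\mathrm{Vol}_g B^g_\rho(p)=\epsilon^n\omega_n/n$, combined with the volume expansion \eqref{expvol}: $\rho^{-n}\mathrm{Vol}_g B^g_\rho(p)=\omega_n/n+W_0\rho^2+\mathcal O(\rho^4)$. Solving gives $(\epsilon/\rho)^n=1+\frac{n}{\omega_n}W_0\rho^2+\mathcal O(\epsilon^4)$, hence $v_0(p,\epsilon,0)=-\frac{W_0}{\omega_n}\epsilon^2+\mathcal O(\epsilon^4)=\frac{R_p}{6n(n+2)}\epsilon^2+\mathcal O(\epsilon^4)$, using $W_0=-\omega_n R_p/(6n(n+2))$.

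Assembling: the coefficient of $\epsilon^0$ is $\lambda_1$, so $\partial_\epsilon\hat\lambda(p,0,0)=0$ automatically since there is no odd term (both \eqref{expvol} and \eqref{expeig} have only even powers up to the error, and $v_0$ is even in $\epsilon$), which also gives $\partial_\epsilon^3\hat\lambda(p,0,0)=0$. The $\epsilon^2$-coefficient is $\Lambda_0-2\lambda_1\cdot\frac{W_0}{\omega_n}\cdot(\text{from }\tfrac{\epsilon^2}{\rho^2})$— wait, more carefully: at order $\epsilon^2$ only $\Lambda_0$ survives (the $-2v_0\lambda_1$ term is order $\epsilon^4$), giving $\tfrac12\partial_\epsilon^2\hat\lambda=\Lambda_0=-R_p/6$; to match the claimed $\hat\Lambda_0=-\tfrac{R_p}{6}(1+2\lambda_1/(n(n+2)))$ I must instead be more careful and retain that $\rho^2$ itself contributes $\epsilon^2(1+2v_0+\dots)$ so that $\Lambda_0\rho^2\cdot\epsilon^2/\rho^2$ is fine but the $\lambda_1\cdot\epsilon^2/\rho^2$ term expands as $\lambda_1(1-2v_0+\dots)=\lambda_1-2\lambda_1 v_0$, and since $v_0$ has leading order $\epsilon^2$ this is the $\epsilon^2$ correction only if... — here I would just carefully collect: $\hat\lambda=\lambda_1-2\lambda_1 v_0+\Lambda_0\epsilon^2(1+\mathcal O(\epsilon^2))+\Lambda\epsilon^4+\dots$ and plug $v_0=\frac{R_p}{6n(n+2)}\epsilon^2+\mathcal O(\epsilon^4)$. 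Then the $\epsilon^2$-coefficient is $\Lambda_0-2\lambda_1\frac{R_p}{6n(n+2)}=-\frac{R_p}{6}-\frac{\lambda_1 R_p}{3n(n+2)}=-\frac{R_p}{6}(1+\frac{2\lambda_1}{n(n+2)})=\hat\Lambda_0$, as claimed. For the $\epsilon^4$-coefficient one collects $\Lambda$, plus $\lambda_1$ times the $\epsilon^4$-term of $(1+v_0)^{-2}$ (which involves the $\epsilon^4$-part of $v_0$, itself determined by the next order of the volume expansion \eqref{expvol}, bringing in $W$), plus the cross term $\Lambda_0\cdot 2v_0\cdot(\dots)$; the stated answer $\hat\Lambda=\Lambda+\lambda_1(\tfrac{2W}{\omega_n}-\tfrac{R_p^2}{36n^2(n+2)})$ is exactly this bookkeeping, where the $-\lambda_1 R_p^2/(36n^2(n+2))$ accounts for the quadratic-in-$v_0$ contribution $\lambda_1\cdot 3v_0^2$ minus cross terms.

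\textbf{Main obstacle.} The routine part is citing \eqref{expvol} and \eqref{expeig}; the delicate part is the careful bookkeeping of how the $\epsilon^2$-rescaling factor $(\epsilon/\rho)^2$ interacts with \emph{both} the $\lambda_1$ term and the $\Lambda_0\rho^2$ term, and extracting the $\epsilon^4$-coefficient of $v_0$ from the second-order volume expansion — in particular getting the numerical coefficient $-R_p^2/(36n^2(n+2))$ right, which requires keeping the $v_0^2$ term in $(1+v_0)^{-2}=1-2v_0+3v_0^2-\dots$. I would do this expansion symbolically once and verify the vanishing of odd-order terms follows from parity of the geodesic-ball expansions in $\epsilon$.
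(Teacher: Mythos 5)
Your proposal is correct and follows essentially the same route as the paper: expand $v_0(p,\epsilon,0)$ from the volume constraint via \eqref{expvol}, substitute $\rho=\epsilon(1+v_0)$ into \eqref{expeig}, and collect the $\epsilon^2$ and $\epsilon^4$ coefficients, with odd orders vanishing by parity of the geodesic-ball expansions. The only minor inaccuracy is your mention of a cross term $\Lambda_0\cdot 2v_0$ — the $\Lambda_0\rho^2$ term, multiplied by $\epsilon^2/\rho^2$, is exactly $\Lambda_0\epsilon^2$ with no $v_0$-correction, so the $\epsilon^4$-coefficient comes only from $\Lambda$ and from $\lambda_1(1+v_0)^{-2}$, exactly as in the paper's bookkeeping $\hat\Lambda=\Lambda-\lambda_1(2A-3A_0^2)$.
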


\begin{proof}
It suffices to find the expansion of $ \hat \lambda (p,\epsilon,0)$ in power of $\epsilon$. First we have to expand $v_0(p, \epsilon,0)$ and this can be done by using expansion (\ref{expvol}), keeping in mind the definition of the metric $\hat g$ and the fact that when $\bar v = 0$ the constant $v_0$ is given by the relation
\[
\textnormal{Vol}_{\hat g}\, B_1 = \textnormal{Vol}\, B_1 = \frac{\omega_n}{n}
\]
We find
\[
v_0 = A_0\, \epsilon^2 + A\, \epsilon^4 
+\mathcal{O}(\epsilon^5)\\
\]
where
$$
\begin{array}{lll}
A_0 &=& - \displaystyle \frac{W_0}{\omega_n}\\[4mm]
A & = &- \displaystyle \frac{1}{\omega_n} \, \left( (n+2)\, A_0\, W_0 + \frac{n-1}{2}\, A_0^2\, \omega_n + W\right)
\end{array}
$$
Now we use expansion (\ref{expeig}) replacing $\epsilon$ by $\epsilon(1+v_0)$. 
We obtain 
\[
\hat \lambda = \lambda_1 + \hat \Lambda_0\, \epsilon^2 + \hat \Lambda\, \epsilon^4 
+\mathcal{O}(\epsilon^5)\\
\]
where 
$$
\begin{array}{lll}
\hat \Lambda_0 &=& \displaystyle \Lambda_0 - 2\, \lambda_1 \, A_0 = -\frac{R_p}{6}\, \left(1 + 2\,\frac{ \lambda_1}{ n\, (n+2)}\right)\\[4mm]
\hat \Lambda & = & \displaystyle \Lambda - \lambda_1\, (2A - 3\, A_0^2)  = \Lambda + \lambda_1\, \left( \frac{2\, W}{\omega_n} - \frac{R_p^2}{36\, n^2\, (n+2)} \right)
\end{array}
$$
This concludes the proof of the result. 
\end{proof}

\begin{Lemma}\label{exp2} 
We have 
\[
\begin{array}{lll}
\partial_{\bar v}\hat \lambda (p,0,0) & = & 0\\[4mm]
\partial_{\bar v}^2\hat \lambda (p,0,0)(\bar v,\bar v) & = & \displaystyle -2\,c_1\,\int_{S^{n-1}}\bar{v}\, H(\bar v)\, \\
\end{array}
\]
where $H$ is the operator of Proposition \ref{H}, whose expression is given by (\ref{Hache}), and $c_1 : = \left. \partial_r \phi_1\right|_{r=1}$ is the constant defined in (\ref{c00}).
\end{Lemma}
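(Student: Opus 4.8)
The plan is to reduce $\hat\lambda(p,0,\cdot)$ to the first Dirichlet eigenvalue of a Euclidean ball, to compute its first and second variations along the volume-preserving radial deformation determined by $\bar v$, and to recognize the resulting boundary term through the operator $H$ of Proposition~\ref{H}.

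At $\epsilon=0$ the rescaled metric $\bar g$, hence $\hat g$, is the Euclidean one, so $\hat\lambda(p,0,\bar v)$ is exactly the first Euclidean Dirichlet eigenvalue of $B_{1+v}$ with $v:=v_0(p,0,\bar v)+\bar v$, a domain of Euclidean volume $\mathrm{Vol}\,B_1$ by Proposition~\ref{pr:1.2}. Fix $p$, replace $\bar v$ by $s\bar v$, and set $\Omega_s:=B_{1+v_0(s)+s\bar v}$ with $v_0(s):=v_0(p,0,s\bar v)$, so that $\Omega_0=B_1$ and $s\mapsto\hat\lambda(p,0,s\bar v)=\lambda_{\Omega_s}$ is smooth, its first two $s$-derivatives at $0$ being $\partial_{\bar v}\hat\lambda(p,0,0)(\bar v)$ and $\partial_{\bar v}^2\hat\lambda(p,0,0)(\bar v,\bar v)$. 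Since $\int_{S^{n-1}}\bar v=0$, the exact identity $\mathrm{Vol}(\Omega_s)=\mathrm{Vol}\,B_1$ forces $v_0(s)=\mathcal O(s^2)$; hence at $s=0$ the deformation velocity of $\partial\Omega_s$ has outward normal component $\bar v$ on $\partial B_1$. As $g(\nabla\phi_1,\nu)\equiv c_1$ on $\partial B_1$, Proposition~\ref{lambda} gives $\partial_{\bar v}\hat\lambda(p,0,0)(\bar v)=-c_1^2\int_{S^{n-1}}\bar v=0$, the first assertion (equivalently, $B_1$ is an extremal domain, hence a volume-constrained critical point of $\Omega\mapsto\lambda_\Omega$).

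For the second derivative I would differentiate the Hadamard formula of Proposition~\ref{lambda} once more along $s\mapsto\Omega_s$. Parametrize $\partial\Omega_s$ over $S^{n-1}$ by $\gamma_s(\theta):=(1+v_0(s)+s\bar v(\theta))\,\theta$, and write $\Xi_s$, $\nu_s$, $u_s$, $J_s$ for the deformation velocity, the outward unit normal, the first eigenfunction on $\Omega_s$, and the Jacobian of $\gamma_s$. Then Proposition~\ref{lambda} reads
\[
\frac{d}{ds}\lambda_{\Omega_s}=-\int_{S^{n-1}}\big[(\partial_{\nu_s}u_s)\circ\gamma_s\big]^{2}\,\big[(\Xi_s\cdot\nu_s)\circ\gamma_s\big]\,J_s\;\textnormal{d}\sigma .
\]
Differentiating at $s=0$ yields two contributions. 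In the one obtained by differentiating $\big[(\Xi_s\cdot\nu_s)\circ\gamma_s\big]\,J_s$, the factor $\big[(\partial_{\nu_0}u_0)\circ\gamma_0\big]^{2}\equiv c_1^{2}$ is constant and pulls out of the integral, leaving $-c_1^{2}\,\tfrac{d}{ds}\big|_{0}\int_{\partial\Omega_s}(\Xi_s\cdot\nu_s)\,\textnormal{d}\sigma_s=-c_1^{2}\,\tfrac{d^{2}}{ds^{2}}\big|_{0}\mathrm{Vol}(\Omega_s)=0$, precisely because the deformation preserves volume; this is the only place the correction $v_0$ enters, and only through the exact volume identity. Hence
\[
\partial_{\bar v}^{2}\hat\lambda(p,0,0)(\bar v,\bar v)=-2c_1\int_{S^{n-1}}\Big(\tfrac{d}{ds}\big|_{0}\big[(\partial_{\nu_s}u_s)\circ\gamma_s\big]\Big)\,\bar v\;\textnormal{d}\sigma .
\]

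Finally I would identify this shape derivative of the Neumann datum. At $\epsilon=0$ the map $Y$ restricts on $\partial B_1$ to $\gamma_s$ and is an isometry of $(B_1,\hat g)$ onto $\Omega_s$ equipped with the Euclidean metric, so $(\partial_{\nu_s}u_s)\circ\gamma_s$ coincides with $\hat g(\nabla\hat\phi,\hat\nu)\big|_{\partial B_1}$ evaluated at $(p,0,s\bar v)$; therefore $\tfrac{d}{ds}\big|_{0}\big[(\partial_{\nu_s}u_s)\circ\gamma_s\big]$ is the linearization in $\bar v$ at $(p,0,0)$ of $\hat g(\nabla\hat\phi,\hat\nu)\big|_{\partial B_1}$. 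By the definition of $F$ and Proposition~\ref{H} this linearization equals $H(\bar v)$ plus the linearization of the mean of the Neumann datum; the latter is a constant function on $S^{n-1}$, hence integrates to $0$ against $\bar v$ since $\int_{S^{n-1}}\bar v=0$. Substituting into the last display gives $\partial_{\bar v}^{2}\hat\lambda(p,0,0)(\bar v,\bar v)=-2c_1\int_{S^{n-1}}\bar v\,H(\bar v)$ with $H$ as in~(\ref{Hache}), as claimed. The main obstacle is the bookkeeping in this second differentiation of the Hadamard formula: one must check that the ``volume term'' really carries the constant factor $c_1^{2}$, so that it vanishes by the volume constraint, and that the shape derivative of the normal derivative is computed with exactly the parametrization underlying $F$; invoking Proposition~\ref{H} is what spares us from re-deriving the boundary value problem~(\ref{c00}) by hand.
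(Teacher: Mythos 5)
Your proof is correct and follows essentially the same route as the paper: fix $\epsilon=0$ so that $\hat\lambda$ is the Euclidean Dirichlet eigenvalue of $B_{1+v_0(s)+s\bar v}$, use the Hadamard formula of Proposition~\ref{lambda} for the first derivative, differentiate that formula once more, kill one term by volume preservation, and recognize the surviving boundary integrand as $H(\bar v)$. The two places where your bookkeeping differs are minor streamlinings: you pull the constant factor $c_1^2$ out of the ``transport'' term and identify what remains as $\tfrac{d^2}{ds^2}\big|_0\mathrm{Vol}(\Omega_s)=0$, whereas the paper invokes Henry's formula (Appendix~III) and then separately uses the explicit second variation of volume $\int_{\partial\Omega_t}(\dot\sigma+\sigma\,\partial_\nu\sigma+\tilde H\sigma^2)=0$; and you invoke Proposition~\ref{H} to identify $\tfrac{d}{ds}\big|_0[(\partial_{\nu_s}u_s)\circ\gamma_s]$ with $H(\bar v)$ modulo a constant, whereas the paper re-derives this by observing that $\dot\phi=\psi$ solves~(\ref{c00}) and then using the explicit formula~(\ref{Hache}). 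Both handlings are valid, and the fact that $\dot v_0(0)=0$ (needed to ensure the velocity at $s=0$ is exactly $\bar v$ and that $v_0$ only enters through the volume identity) is established the same way in both.
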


\begin{proof}
Let $\Omega_0= B_1$ be the unit ball of $\mathbb{R}^n$, and let $\Omega_t= B_{(1+v_0+t\bar v)}$,
where we recall that $\int_{S^{n-1}}\bar v=0$ and $v_0 = v_0(t)$ is chosen in order that $\textnormal{Vol}\,\Omega_t=\textnormal{Vol}\, \Omega_0 = \frac{\omega_n}{n}$.
We have
\[
\partial_{\bar v}\hat \lambda (p,0,0)(\bar v) = \left.\frac{\textnormal{d}}{\textnormal{d} t}\right|_{t=0} \lambda_{\Omega_t}
\]
and
\[
\partial_{\bar v}^2\hat \lambda (p,0,0)(\bar v,\bar v) = \left.\frac{\textnormal{d}^2}{\textnormal{d} t^2}\right|_{t=0}\lambda_{\Omega_t}
\]
where $\lambda_{\Omega_t}$ is the first Dirichlet eigenvalue of $\Omega_t$.
The expansion of $\textnormal{Vol}\, \Omega_t$ directly prove that $v_0=\mathcal{O}(t^2)$. In fact, in polar coordinates, we have
\begin{eqnarray*}
\textnormal{Vol}\, \Omega_t & = & \int_{S^{n-1}} \int_0^{1+v_0 + t\, \bar v} r^{n-1}\, \textnormal{d}r\, \textnormal{d}\theta\\
& = & \frac{1}{n} \int_{S^{n-1}} (1+v_0 + t\, \bar v)^n\, \textnormal{d}\theta\\
& = & \frac{1}{n} \int_{S^{n-1}} \left[(1+v_0)^n + n\, (1+v_0)^{n-1}\, t\, \bar v + \mathcal{O}(t^2)\right]\, \textnormal{d}\theta\\
& = & \frac{\omega_n\, (1+v_0)^n}{n} + \mathcal{O}(t^2)
\end{eqnarray*}
Differentiating this expression with respect to $t$, and keeping in mind that $v_0(0) = 0$, we obtain that $v_0(t) = \mathcal{O}(t^2)$.
For $y\in\Omega_0$ and $t$ small, let 
\[
h(t,y)=\left(1+v_0+t\, \chi(y)\, \bar v\left(\frac{y}{|y|}\right)\right)y
\]
where $\chi$ is a cutoff function identically equal to $0$ when $|y| \leq 1/2$ and identically equal to $1$ when $|y|Ê\geq 3/4$, so that 
$h(t,\Omega_0)=\Omega_t$. We will denote the $t$-derivative with a dot. Let $V(t,h(t,y))=\dot h(t,y)$ be the first variation of the domain $\Omega_t$.
Let $\nu$ be the unit normal to $\partial\Omega_t$ and let $\sigma= \langle V,  \nu\rangle $ the normal variation about $\partial \Omega_t$.
Let $\lambda$ be the first eigenvalue and $\phi$ the first eigenfunction of the Dirichlet Laplacian over $\Omega_t$ normalized in order to have $L^2$ norm equal to 1.
From Proposition \ref{lambda} we have 
$$
\dot \lambda=-\int_{\partial \Omega_t}\,(\partial_\nu \phi)^2\, \sigma\, 
$$
where $\partial_\nu \phi = \langle \nabla \phi, \nu\rangle$. At $t=0$ and on the boundary, we have $\phi=\phi_1$, $\partial_\nu\phi=\partial_r\phi_1=c_1$, $\sigma=\bar v$. Then $\dot \lambda (0) = 0$. This proves the first part of the Lemma.

\medskip

We can use now equality (\ref{henry2}) of Proposition \ref{prop_henry} of the Appendix (with $f = (\partial_\nu \phi )^2\, \sigma$) in order to derivate this formula with respect to $t$. We obtain
$$
\ddot \lambda=-\int_{\partial \Omega_t} \left[(\partial_\nu \phi)^2\, (\dot\sigma+\sigma\,\partial_\nu \phi\, + \tilde H\,\sigma^2) + 2\sigma\,(\partial_\nu \phi\,\partial_\nu \dot \phi+\sigma\,\partial_\nu \phi\, \partial^2_\nu \phi)\right]\, 
$$
where $\tilde H$ is the mean curvature of $\partial \Omega_t$. Now the second variation of the volume of $\Omega_t$ is
$$
\ddot {\textnormal{Vol}\, \Omega_t}=\int_{\partial \Omega_t}(\dot\sigma+\sigma\,\partial_\nu\sigma+\tilde H\sigma^2)=0.
$$
Such equation can be obtained differentiating equality (\ref{henry1}) of Proposition \ref{prop_henry} with $f=1$, using equality (\ref{henry1}) of Proposition \ref{prop_henry} with $f=\sigma$. 
On the other hand, at $t=0$ and on the boundary, we have $\phi=\phi_1$, $\partial_\nu\phi=\partial_r\phi_1=c_1$,
$\partial^2_\nu\phi=\partial^2_r\phi_1=c_2=-(n-1)c_1$, $\sigma=\bar v$, $\dot\phi=\psi$, where $\psi$ solve (\ref{c00}).
We obtain
$$
\ddot{\lambda}(0)=-2\, c_1\int_{S^{n-1}}\bar v\, (\partial_r\psi+c_2\bar v)=-2\, c_1\int_{S^{n-1}}\bar v\, H(\bar v).
$$
The proof of the Lemma follows at once.
\end{proof}

\begin{Lemma}\label{exp3}
We have
\[
\begin{array}{lll}
\partial_\epsilon\partial_{\bar v}\hat \lambda (p,0,0) &=&0 \\[4mm]
\partial_\epsilon^2\partial_{\bar v}\hat \lambda (p,0,0)\, \bar v &=&\displaystyle - \frac{c_1^2}{3} \int_{S^{n-1}}\mathring{Ric}_p(\Theta,\Theta)\, \bar v\, \\
\end{array}
\]
where $\Theta$ is the vector of $T_p M$ whose geodesic coordinates are $y \in S^{n-1}$, according with (\ref{nottheta}), and $c_1 : = \left. \partial_r \phi_1\right|_{r=1}$ is the constant defined in (\ref{c00}).

\end{Lemma}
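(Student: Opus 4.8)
The plan is to compute the linear functional $\bar v \mapsto \partial_{\bar v}\hat\lambda(p,\epsilon,0)\,\bar v$ (with $p$ and $\epsilon$ fixed) by the first variation formula of Proposition~\ref{lambda}, and then to extract its Taylor coefficients in $\epsilon$; the two derivatives in the statement are the coefficient of $\epsilon$ and twice the coefficient of $\epsilon^2$. In the dilated metric $\bar g=\epsilon^{-2}g$ the domain attached to $(p,\epsilon,t\bar v)$ is $B^{\bar g}_{1+v_0(t)+t\bar v}(p)$, where $v_0(t):=v_0(p,\epsilon,t\bar v)$ is the constant of Proposition~\ref{pr:1.2} that keeps $\mathrm{Vol}_{\bar g}$ equal to $\mathrm{Vol}\,B_1$; at $t=0$ this is the genuine geodesic ball $B^{\bar g}_{1+v_0(0)}(p)$ with $v_0(0)=v_0(p,\epsilon,0)=\mathcal{O}(\epsilon^2)$. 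Parametrizing its boundary at time $t$ over $S^{n-1}$ in $\bar g$-geodesic normal coordinates, the generating vector field on $\partial B^{\bar g}_{1+v_0(0)}(p)$ equals $(\dot v_0(0)+\bar v)$ times the unit radial field; by the Gauss lemma the latter is the outward unit normal $\bar\nu=\partial_{\bar r}$ ($\bar r$ the $\bar g$-distance to $p$), so $\bar g(\Xi,\bar\nu)=\dot v_0(0)+\bar v$. Proposition~\ref{lambda} then yields, after identifying $\partial B^{\bar g}_{1+v_0(0)}(p)$ with $S^{n-1}$,
\[
\partial_{\bar v}\hat\lambda(p,\epsilon,0)\,\bar v \;=\; -\int_{S^{n-1}}\bigl(\partial_{\bar r}\bar\phi\bigr)^2\,\bigl(\dot v_0(0)+\bar v\bigr)\,\mathrm{dvol}_{\bar g_{\mathrm{in}}},
\]
where $\bar\phi$ is the first eigenfunction of $B^{\bar g}_{1+v_0(0)}(p)$ normalized by $\int\bar\phi^2\,\mathrm{dvol}_{\bar g}=1$.

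Next I would expand the three ingredients on the right in powers of $\epsilon$, using $v_0(0)=\mathcal{O}(\epsilon^2)$, $\int_{S^{n-1}}\bar v=0$ and $\int_{S^{n-1}}\omega^i\omega^j\,\mathrm{dvol}_{S^{n-1}}=\tfrac{\omega_n}{n}\delta^{ij}$. From (\ref{expmetric}) (no term linear in $x$, quadratic term $-\tfrac13 R_{ikjl}x^kx^l$) one gets the geodesic polar form $\mathrm{dvol}_{\bar g}=\rho^{n-1}\bigl(1-\tfrac{\epsilon^2\rho^2}{6}Ric_{ij}\,\omega^i\omega^j+\mathcal{O}(\epsilon^3)\bigr)\,d\rho\,\mathrm{dvol}_{S^{n-1}}$; differentiating the constraint $\mathrm{Vol}_{\bar g}B^{\bar g}_{1+v_0(t)+t\bar v}(p)=\omega_n/n$ at $t=0$ then gives
\[
\dot v_0(0)\;=\;\frac{\epsilon^2}{6\,\omega_n}\int_{S^{n-1}}\mathring{Ric}_p(\Theta,\Theta)\,\bar v\,\mathrm{dvol}_{S^{n-1}}+\mathcal{O}(\epsilon^3),
\]
and the same expansion gives $\mathrm{dvol}_{\bar g_{\mathrm{in}}}=\bigl(1+\epsilon^2(-\tfrac16 Ric_{ij}\,\omega^i\omega^j+b(\epsilon))+\mathcal{O}(\epsilon^3)\bigr)\mathrm{dvol}_{S^{n-1}}$ on the boundary, with $b(\epsilon)$ depending on $\epsilon$ alone. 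For the Neumann datum I would use (\ref{phigeod}) (with $\epsilon$ replaced by $\epsilon(1+v_0(0))$) together with the relation $\bar\phi=\epsilon^{n/2}\phi$ between the $L^2$-normalized eigenfunctions; writing $y=|z|\,\omega$ and $\bar r=(1+v_0(0))|z|$, so that $\partial_{\bar r}=(1+v_0(0))^{-1}\partial_{|z|}$, and noting $\partial_{|z|}\bigl(|z|^2\phi_1(|z|)\bigr)\big|_{|z|=1}=2\phi_1(1)+\phi_1'(1)=c_1$ (because $\phi_1(1)=0$), one obtains on $S^{n-1}$
\[
\partial_{\bar r}\bar\phi\;=\;c_1+\epsilon^2\Bigl(\tfrac{c_1}{12}\,\mathring{Ric}_p(\Theta,\Theta)+a(\epsilon)\Bigr)+\mathcal{O}(\epsilon^3),
\]
with $a(\epsilon)$ depending on $\epsilon$ alone — the term $R_p\,G_2(|z|)$ of (\ref{phigeod}) and all powers of $1+v_0(0)$ and normalization factors are rotationally invariant and contribute only to $a(\epsilon)$.

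Substituting these into the boundary integral, every term constant on $S^{n-1}$ (the $a(\epsilon)$, $b(\epsilon)$, the powers of $1+v_0(0)$, and the leading $c_1^2$) is multiplied by $\bar v$ under the integral and drops out since $\int_{S^{n-1}}\bar v=0$; in particular the whole expression is $\mathcal{O}(\epsilon^2)$, so $\partial_{\bar v}\hat\lambda(p,0,0)=0$ (as in Lemma~\ref{exp2}) and there is no $\epsilon^1$ term, i.e. $\partial_\epsilon\partial_{\bar v}\hat\lambda(p,0,0)=0$. At order $\epsilon^2$, the factor $(\partial_{\bar r}\bar\phi)^2\,\mathrm{dvol}_{\bar g_{\mathrm{in}}}$ contributes, against $\bar v$, the two non-invariant pieces $2c_1\cdot\tfrac{c_1}{12}\,\mathring{Ric}_p(\Theta,\Theta)$ and $c_1^2\cdot(-\tfrac16)Ric_{ij}\,\omega^i\omega^j$, and these cancel because $\int_{S^{n-1}}Ric_{ij}\,\omega^i\omega^j\,\bar v=\int_{S^{n-1}}\mathring{Ric}_p(\Theta,\Theta)\,\bar v$; hence the only surviving $\epsilon^2$ contribution is $-\dot v_0(0)\int_{S^{n-1}}(\partial_{\bar r}\bar\phi)^2\,\mathrm{dvol}_{\bar g_{\mathrm{in}}}=-\tfrac{c_1^2\epsilon^2}{6}\int_{S^{n-1}}\mathring{Ric}_p(\Theta,\Theta)\,\bar v+\mathcal{O}(\epsilon^3)$. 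Thus $\partial_{\bar v}\hat\lambda(p,\epsilon,0)\,\bar v=-\tfrac{c_1^2}{6}\epsilon^2\int_{S^{n-1}}\mathring{Ric}_p(\Theta,\Theta)\,\bar v+\mathcal{O}(\epsilon^3)$, so the coefficient of $\epsilon^2$ is $-\tfrac{c_1^2}{6}\int_{S^{n-1}}\mathring{Ric}_p(\Theta,\Theta)\,\bar v$ and therefore $\partial_\epsilon^2\partial_{\bar v}\hat\lambda(p,0,0)\,\bar v=-\tfrac{c_1^2}{3}\int_{S^{n-1}}\mathring{Ric}_p(\Theta,\Theta)\,\bar v$, as claimed.

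The main obstacle is the bookkeeping of which $\mathcal{O}(\epsilon^2)$ quantities are rotationally invariant — hence invisible once paired with the mean-zero $\bar v$ — and which are not; in particular one must notice that $\dot v_0(0)$ does not vanish here: in contrast with the Euclidean situation of Lemma~\ref{exp2} (where $\dot v_0(0)=0$), here $\dot v_0(0)$ is an $\mathcal{O}(\epsilon^2)$ quantity which, after the exact cancellation between the Neumann-datum correction and the boundary-measure correction, is the sole source of the stated answer.
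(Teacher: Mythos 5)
Your proposal is correct and follows essentially the same route as the paper: you apply the first variation formula of Proposition~\ref{lambda} to $\bar v\mapsto\hat\lambda(p,\epsilon,\bar v)$ at $\bar v=0$, expand the Neumann datum (via~(\ref{phigeod})), the boundary measure, and $\partial_{\bar v}v_0$ (the paper's Lemma~\ref{v0punto}) to order $\epsilon^2$, and read off the coefficients, obtaining the paper's constant $C=-\tfrac{c_1^2}{6}-\tfrac{2c_1^2}{12}+\tfrac{c_1^2}{6}=-\tfrac{c_1^2}{6}$. The only cosmetic differences are that you work directly in $\bar g$-geodesic coordinates so that Gauss's lemma gives $\bar g(\Xi,\bar\nu)=\dot v_0(0)+\bar v$ without the $(1+v_0)$ bookkeeping, and that you observe the second and third contributions cancel against $\bar v$ (leaving $\dot v_0$ as the sole source), whereas the paper simply sums all three — both give $-c_1^2/6$.
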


In order to prove this lemma, we start with a preliminary result. The formulas for the geometric quantities we will consider are potentially complicated, and to keep notations short, we agree on the following: any expression of the form $L_{p}(v)$ denotes a linear combination of the function $v$ together with its derivatives up to order 1, whose coefficients can depend on $\epsilon$ and there exists a positive constant $c$ independent on $\epsilon \in (0,1)$ and on $p$ such that
\[
\|L_p(v)\|_{C^{1, \alpha}(S^{n-1})} \leq c\,  \|v\|_{C^{2, \alpha}(S^{n-1})} \, ;
\]
similarly, given $a \in \mathbb{N}$, any expression of the form $Q_p^{(a)}(v)$ denotes a nonlinear operator in the function $v$ together with its derivatives up to order 1,  whose coefficients can depend on $\epsilon$ and there exists a positive constant $c$ independent on $\epsilon \in (0,1)$ and on $p$
such that
\[
\|Q_p^{(a)}(v_1)-Q_p^{(a)}(v_2)\|_{C^{1, \alpha}(S^{n-1})} \leq c\, \left( \|v_1\|_{C^{2, \alpha}(S^{n-1})}  +  \|v_2\|_{C^{2, \alpha}(S^{n-1})}\right)^{a-1} \,  \|v_2-v_1\|_{C^{2, \alpha}(S^{n-1})}
\]
provided $ \|v_i\|_{C^{2, \alpha}(S^{n-1})} \leq 1$, for $i=1,2$.

\begin{Lemma}\label{v0punto}
We have
$$
\partial_{\bar v}v_0(p,\epsilon,0)(\bar v)=\frac{\epsilon^2}{6\,\omega_n}\, \int_{S^{n-1}}\mathring{Ric}(\Theta,\Theta)\, \bar v
+\int_{S^{n-1}}[\mathcal{O}(\epsilon^5)+\epsilon^3\, L_p(\bar v)]\,
$$
where $\Theta$ is the vector of $T_p M$ whose geodesic coordinates are $y \in S^{n-1}$, according with (\ref{nottheta}).

\end{Lemma}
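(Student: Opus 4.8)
The plan is to use that $v_0=v_0(p,\epsilon,\bar v)$ is defined implicitly by the volume constraint
\[
\textnormal{Vol}_{\bar g}\, B^{\bar g}_{1+v_0+\bar v}(p)=\textnormal{Vol}\, B_1=\frac{\omega_n}{n}\, ,
\]
and to differentiate this identity with respect to $\bar v$ at $\bar v=0$. Working in the rescaled geodesic normal coordinates $y$ at $p$ (so that $\bar g_{ij}(y)=g_{ij}(\epsilon y)$ and (\ref{expmetric}) applies with $x=\epsilon y$) and passing to polar coordinates $y=r\,\theta$ with $\theta\in S^{n-1}$, one has
\[
\textnormal{Vol}_{\bar g}\, B^{\bar g}_{1+v}(p)=\int_{S^{n-1}}\int_0^{1+v_0+\bar v(\theta)}\sqrt{\det\bar g}(r\theta)\, r^{n-1}\, \textnormal{d}r\, \textnormal{d}\theta \, .
\]
From (\ref{expmetric}) (equivalently, consistently with (\ref{expvol})--(\ref{W})) one obtains the expansion of the volume density
\[
\sqrt{\det\bar g}(r\theta)=1-\frac{\epsilon^2}{6}\, Ric_p(\Theta,\Theta)\, r^2+\epsilon^3\, r^3\, \mathcal{R}_3(\theta)+\epsilon^4\, r^4\, \mathcal{R}_4(\theta)+\mathcal{O}(\epsilon^5\, r^5)\, ,
\]
where $\Theta=\Theta(\theta)$ is as in (\ref{nottheta}) and $\mathcal{R}_3,\mathcal{R}_4$ are universal polynomial expressions in the components at $p$ of the curvature tensor and its covariant derivatives.

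First I would replace $\bar v$ by $t\,\bar v$, set $v_0(t):=v_0(p,\epsilon,t\bar v)$ — so that $v_0(0)=v_0(p,\epsilon,0)=\mathcal{O}(\epsilon^2)$ by Proposition \ref{pr:1.2} and $\dot v_0(0)=\partial_{\bar v}v_0(p,\epsilon,0)(\bar v)$ — and differentiate the constraint at $t=0$. Since $\dot v_0(0)$ is a constant on $S^{n-1}$, after factoring out the nonzero constant $(1+v_0(0))^{n-1}$ the resulting identity can be solved:
\[
\partial_{\bar v}v_0(p,\epsilon,0)(\bar v)=-\,\frac{\displaystyle\int_{S^{n-1}}\bar v(\theta)\,\sqrt{\det\bar g}\big((1+v_0(0))\theta\big)\, \textnormal{d}\theta}{\displaystyle\int_{S^{n-1}}\sqrt{\det\bar g}\big((1+v_0(0))\theta\big)\, \textnormal{d}\theta}\, .
\]

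It then remains to expand numerator and denominator in $\epsilon$. Using $v_0(0)=\mathcal{O}(\epsilon^2)$ and $\int_{S^{n-1}}\theta^i\theta^j\, \textnormal{d}\theta=\frac{\omega_n}{n}\,\delta_{ij}$, the denominator equals $\omega_n+\mathcal{O}(\epsilon^2)=\omega_n\,(1+\mathcal{O}(\epsilon^2))$. For the numerator the crucial point is that $\int_{S^{n-1}}\bar v=0$ since $\bar v\in C^{2,\alpha}_m(S^{n-1})$: hence the $\epsilon^0$ term drops out and the first nonvanishing contribution is $-\tfrac{\epsilon^2}{6}\int_{S^{n-1}}Ric_p(\Theta,\Theta)\,\bar v$, and $\int_{S^{n-1}}\bar v=0$ also lets one replace $Ric_p$ by its trace-free part $\mathring{Ric}_p$ there (the difference is a multiple of $\int_{S^{n-1}}\bar v=0$). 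All the remaining contributions — those involving $\mathcal{R}_3$, $\mathcal{R}_4$, the $\mathcal{O}(\epsilon^2)$ correction coming from $1+v_0(0)$ inside the density, and the expansion of $1/\textnormal{den}$ — are linear in $\bar v$, as they must be since $\partial_{\bar v}v_0$ is a directional derivative, and are of order $\epsilon^3$ or higher; each is of the form $\int_{S^{n-1}}\epsilon^3\, L_p(\bar v)$, absorbing the $\epsilon^4$ piece into $\epsilon^3$ since $\epsilon\in(0,1)$, while the tail produced by $\mathcal{O}(\epsilon^5\, r^5)$ gives $\int_{S^{n-1}}\mathcal{O}(\epsilon^5)$. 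Collecting,
\[
\partial_{\bar v}v_0(p,\epsilon,0)(\bar v)=\frac{\epsilon^2}{6\,\omega_n}\int_{S^{n-1}}\mathring{Ric}_p(\Theta,\Theta)\, \bar v+\int_{S^{n-1}}\big[\mathcal{O}(\epsilon^5)+\epsilon^3\, L_p(\bar v)\big]\, .
\]

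The main obstacle I anticipate is the bookkeeping of the powers of $\epsilon$ together with the sign and normalization conventions (the footnote after (\ref{expmetric}) already flags that the sign of $R_{ijkl}$ differs from that in \cite{Pac-Xu} and \cite{Pac-Sic}): one must make sure that the leading coefficient is exactly $\frac{1}{6\,\omega_n}$ with the correct sign, that the reduction from $Ric_p$ to $\mathring{Ric}_p$ is legitimate, and that every error term is genuinely linear in $\bar v$ with constants uniform in $p$ and of the claimed order. The analytic steps (differentiation under the integral sign, solving for the constant $\dot v_0(0)$) are routine, and smoothness of $v_0$ in $\bar v$ is already provided by Proposition \ref{pr:1.2}.
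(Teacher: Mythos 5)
Your proposal is correct and proceeds by essentially the same strategy as the paper: implicitly differentiate the volume constraint with respect to $\bar v$ at $\bar v = 0$, exploit $\int_{S^{n-1}}\bar v = 0$ to kill the $\epsilon^0$ term and to pass from $Ric_p$ to $\mathring{Ric}_p$, and use $v_0(p,\epsilon,0)=\mathcal O(\epsilon^2)$ to control the denominator. The only presentational difference is that the paper substitutes $v = v_0 + \bar v$ into the prepackaged expansion~(\ref{volpacxu}) imported from \cite{Pac-Xu} and then differentiates term by term, whereas you write out the volume integral in polar coordinates, differentiate the constraint directly to get a clean quotient formula for $\dot v_0(0)$, and then expand numerator and denominator. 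Your route is slightly more self-contained (it does not lean on \cite{Pac-Xu}'s formula) at the cost of having to justify the expansion of $\sqrt{\det\bar g}$ yourself; the conclusion and all essential steps coincide.
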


\begin{proof}
The expansion in $\epsilon$ and $v$ for the volume of the perturbed geodesic ball $B^g_{\epsilon(1+v)}(p)$ is given in the Appendix of \cite{Pac-Xu} (the corresponding notations with respect to \cite{Pac-Xu} are $B^g_{\epsilon\,(1+v)}(p) = B_{p,\epsilon}(-v)$ and $n = m+1$). We have: 
\begin{equation}\label{volpacxu}
\begin{array}{lll}
\epsilon^{-n}\, \textnormal{Vol}(B^g_{\epsilon(1+v)}(p))&=&\displaystyle \frac{\omega_n}{n} + W_0\,  \epsilon^2 + W\, \epsilon^4 \\[4mm]
&&\displaystyle - \int_{S^{n-1}} v\,   +  \frac{n-1}2\, \int_{S^{n-1}} v^2\,  -\frac16\,\epsilon^2\, \int_{S^{n-1}}Ric_p(\Theta,\Theta)\, v\,  \\[4mm]
&&\displaystyle +\int_{S^{n-1}}\left(\mathcal{O}(\epsilon^5)+\epsilon^3\, L_p(v)+\epsilon^2\, Q_p^{(2)}(v)+Q_p^{(3)}(v)\right)
\end{array}
\end{equation}
where $\Theta$ is the vector in $T_pM$ whose coordinates are $y \in S^{n-1}$, and $W_0$, $W$ are given by (\ref{W}). Putting $v = v_0 + \bar v$ in expansion (\ref{volpacxu}), where $\int_{S^{n-1}} \bar v  = 0$ and $v_0$ is chosen in order that the volume of $B^g_{\epsilon\,(1+v)}(p)$ is equal to the volume of $B_\epsilon$, we obtain 
$$
\begin{array}{lll}
\epsilon^{-n}\, \textnormal{Vol}(B^g_{\epsilon(1+v)}(p))&=&\displaystyle \frac{\omega_n}{n} + W_0\,  \epsilon^2 + W\, \epsilon^4\\[4mm]
&&\displaystyle +v_0\, \left[\omega_n\, \left(1+\frac{n-1}2\,v_0\right)-\frac16\, \epsilon^2\, \int_{S^{n-1}}Ric_p(\Theta,\Theta) \, \right]\\[4mm]
&&\displaystyle +\frac{n-1}2\, \int_{S^{n-1}}\bar v^2\, -\frac16\, \epsilon^2\, \int_{S^{n-1}}\mathring{Ric}_p(\Theta,\Theta)\,\bar v \,  \\[4mm]
&&\displaystyle +\int_{S^{n-1}}\left(\mathcal{O}(\epsilon^5)+\epsilon^3\, L_p(v)+\epsilon^2\, Q_p^{(2)}(v)+Q_p^{(3)}(v)\right) 
\end{array}
$$
where
\[
\mathring Ric = Ric - \frac{R}{n}\, g
\] 
is the traceless Ricci curvature.
In order to compute the expansion of  $$\dot v_0:=\partial_{\bar v}v_0(p,\epsilon,0)(\bar v)=\left.\frac{\textnormal{d}}{\textnormal{d}s}\right|_{s=0}v_0(p,\epsilon,s\bar v).$$
we derivate with respect to $s$, at $s=0$, equality 
$$\textnormal{Vol}_g\, B^g_{\epsilon(1+v_0(p,\epsilon,s\bar v)+s\bar v)}(p)=\textnormal{Vol}\,B_\epsilon$$ using the expansion above.
Recall that we know $v_0(p,\epsilon,0)=\mathcal{O}(\epsilon^2)$. We find
$$
(1+\mathcal{O}(\epsilon^2))\, \omega_n\, \dot v_0=\frac 16\, \epsilon^2\, \int_{S^{n-1}} \mathring{Ric}_p(\Theta,\Theta)\, \bar v\, 
+\int_{S^{n-1}}\left(\mathcal{O}(\epsilon^5)+\epsilon^3\, L_p(\bar v)\right)  
$$
Finally 
$$
\dot v_0=\frac 1{6\, \omega_n}\, \epsilon^2\int_{S^{n-1}}\mathring{Ric}_p(\Theta,\Theta)\, \bar v\,  
+\int_{S^{n-1}}\left(\mathcal{O}(\epsilon^5)+\epsilon^3\,L_p(\bar v)\right)  
$$
This completes the proof of the Lemma. 
\end{proof}

We are now able to prove Lemma \ref{exp3}. 

\begin{proof} {\it (Lemma \ref{exp3})}.
We make a development up to power 2 in $\epsilon$,
of the function 
\[
\left.\frac{\textnormal{d}}{\textnormal{d}s}\right|_{s=0}\hat{\lambda}(p,\epsilon,s\bar v)  \, .
\]
From Proposition \ref{lambda}, we have
$$
\left.\frac{\textnormal{d}}{\textnormal{d}s}\right|_{s=0}\hat{\lambda}(p,\epsilon,s\bar v)=-\int_{\partial B_1}\hat g(V,\hat\nu)(\hat g (\hat \nabla \hat \phi,\hat \nu))^2 \mbox{dvol}_{{\hat g}_{\textnormal{in}}}
$$
where the deformation in a neighborhood of $\partial B_1$ is given by
\[
h(s,y) = \left( 1 + v_0(\epsilon,s\, \bar v) + s\, \bar v\right)y
\]
and
$$V(y)= \left.\frac{\partial h}{\partial s} \right|_{s=0} = \left[\partial_{\bar v}v_0(p,\epsilon,0)(\bar v)+\bar v\left(\frac{y}{|y|}\right)\right]y.$$
In that formula, the term $\hat g (\hat \nabla \hat \phi,\hat \nu)$ is computed with $s=0$ or equivalently $\bar v=0$.
From the definition of $\hat g$ and the expansion of the metric $g$, when $\bar v=0$ we have
\[
\begin{array}{lll}
\hat g_{ij} & = & \displaystyle \left(1+v_0(\epsilon,0)\right)^2\, \left(\delta_{ij}-\frac13\, \epsilon^2\, R_{ikjl}\, y^k\, y^l+\mathcal{O}(\epsilon^3)\right)\\[4mm]
\hat \nu & = & \displaystyle \left(1+v_0(\epsilon,0)\right)^{-1}\, \partial_r=\left(1+v_0(\epsilon,0)\right)^{-1}\, \frac{y}{|y|}\\
\end{array}
\]
The expansion of $\hat \phi(p, \epsilon, 0)$ is almost known: it suffices to replace $\epsilon$ by $\epsilon(1+v_0)$ in formula (\ref{phigeod}). We have
\[
\hat{\phi} = \phi_1+\epsilon^2f_2+\mathcal{O}(\epsilon^3)
\]
where
$$
f_2(y)=\left[R_{ij}\,y^i\,y^j-\frac{R_p}{n} |y|^2\right]\frac{\phi_1}{12}+R_p\;G_2(|y|).
$$
Using the notation
$ R^j\;_{k}\;^m\;_l=g^{ja}g^{mb}R_{akbl}$
we thus have on $\partial B_1$
\[
\begin{array}{lll}
\hat g (\hat \nabla \hat \phi,\hat \nu) & = & \displaystyle \left(1+v_0(\epsilon,0)\right)^{-1}\, \left(\delta_{ij}-\frac 13\, \epsilon^2\, R_{ikjl}\,y^k\,y^l\right)\, \cdot \\[4mm]
&&\displaystyle \cdot\, y^i\,\left(\delta^{jp}+\frac 13\, \epsilon^2\, R^j\;_{k}\;^m\;_l\, y^k\, y^l\right)\left(\frac\partial{\partial y^m}\phi_1+\epsilon^2\, \frac\partial{\partial y^m} f_2\right)+\mathcal{O}(\epsilon^3)\\[4mm]
& = &(1-v_0(\epsilon,0))^{-1}\, c_1+\epsilon^2\, \partial_r f_2+\mathcal{O}(\epsilon^3)\\
\end{array}
\]
where on the boundary 
$$
\partial_rf_2(y)=\frac{c_1} {12}\left[R_{ij}\,y^i\,y^j-\frac{R_p}{n}\right]+R_p\, G_2'(1) \, .
$$
Now we have to expand the measure on the boundary. This is classical and can be done directly from expansion (\ref{expmetric}). We have
$$
\left. \mbox{dvol}_{{\hat g}_{\textnormal{in}}}\right|_{\partial B_1}=(1+v_0)^{n}\left[1-\frac16\, Ric_p(\Theta,\Theta)\, \epsilon^2+\mathcal{O}(\epsilon^2)\right]\, \mbox{dvol}|_{S^{n-1}}
$$
where $\Theta$ is the vector in $T_pM$ whose coordinates are $y \in S^{n-1}$ and $\mbox{dvol}|_{S^{n-1}}$ is the Euclidean volume element induced on $S^{n-1}$.
For the term $\partial_{\bar v}v_0(p,\epsilon,0)(\bar v)$ appearing in $V$ we use Lemma \ref{v0punto}. 
We have
$$
\partial_{\bar v}v_0(p,\epsilon,0)(\bar v)=\frac{\epsilon^2}{6\, \omega_n}\, \int_{S^{n-1}}\mathring{Ric}_p(\Theta,\Theta)\, \bar v\, 
+\int_{S^{n-1}}[\mathcal{O}(\epsilon^5)+\epsilon^3\, L_p(\bar v)]\, 
$$
We finally obtain
$$
\left.\frac{\textnormal{d}}{\textnormal{d}s}\right|_{s=0}\hat{\lambda}(p,\epsilon,s\bar v)=C\, \epsilon^2\int_{S^{n-1}}\mathring{Ric}_p(\Theta,\Theta)\,\bar v\,
+\int_{S^{n-1}}[\mathcal{O}(\epsilon^5)+\epsilon^3\,L_p(\bar v)]\, 
$$
where
$$
C =-\frac{c_1^2}{6}-\frac{2\, c_1^2}{12}+\frac{c_1^2}{6}=-\frac{c_1^2}{6}.
$$
The proof of the Lemma follows at once.
\end{proof}

Summarizing the results of Lemmas \ref{exp1}, \ref{exp2} and \ref{exp3} we obtain the following:

\begin{Proposition}\label{lambdadl1}
Let $p \in M$, let $\epsilon$ and $\bar v$ be small enough. Then:
\[
\begin{array}{lll}
\hat \lambda (p,\epsilon,\bar v) & = & \displaystyle \lambda_1 + \hat \Lambda_0\, \epsilon^2 + \hat \Lambda\, \epsilon^4 \\[4mm]
&   &
\displaystyle - c_1\, \int_{S^{n-1}}\bar{v}\,H(\bar v)\, - \, \frac{c_1^2}{6}\, \epsilon^2\int_{S^{n-1}}\mathring{Ric}_p(\Theta,\Theta)\,\bar v\, \\[4mm]
&&\displaystyle +\int_{S^{n-1}}\left[\mathcal{O}(\epsilon^5)+\epsilon^3\,L_p(\bar v)+\epsilon^2\,Q_p^{(2)}(\bar v)+Q_p^{(3)}(\bar v)\right]\, 
\end{array}
\]
where $\Theta$ is the vector of $T_pM$ whose coordinates are $x \in S^{n-1}$ according with (\ref{nottheta}), and we agree with the convention about $L_p(v)$, $Q_p^{(2)}(v)$ and $Q_p^{(3)}(v)$ we gave before.
\end{Proposition}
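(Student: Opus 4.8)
The plan is to read off Proposition~\ref{lambdadl1} directly from the second-order Taylor expansion of $(\epsilon,\bar v)\mapsto\hat\lambda(p,\epsilon,\bar v)$ displayed at the beginning of this section, substituting into it the values of the partial derivatives at $(\epsilon,\bar v)=(0,0)$ computed in Lemmas \ref{exp1}, \ref{exp2} and \ref{exp3}. Since Proposition~\ref{pr:1.2} (and hence Proposition~\ref{solmodnoyau}) guarantees that $\hat\phi$, $\hat\lambda$ and $v_0$ depend smoothly on $(\epsilon,\bar v)$ near $(0,0)$, Taylor's formula with remainder is legitimate and the formal expansion written there is the genuine one. Plugging in: Lemma \ref{exp1} supplies $\hat\lambda(p,0,0)=\lambda_1$, kills the $\epsilon$ and $\epsilon^3$ terms, and gives $\tfrac12\partial_\epsilon^2\hat\lambda(p,0,0)=\hat\Lambda_0$ and $\tfrac1{24}\partial_\epsilon^4\hat\lambda(p,0,0)=\hat\Lambda$, producing the $\lambda_1+\hat\Lambda_0\,\epsilon^2+\hat\Lambda\,\epsilon^4$ part; Lemma \ref{exp2} kills the $\bar v$-linear term at $(0,0)$ and turns $\tfrac12\,\partial_{\bar v}^2\hat\lambda(p,0,0)\,\bar v^2$ into $-c_1\int_{S^{n-1}}\bar v\,H(\bar v)$; Lemma \ref{exp3} kills the $\epsilon\,\bar v$ term at $(0,0)$ and turns $\tfrac12\,\partial_\epsilon^2\partial_{\bar v}\hat\lambda(p,0,0)\,\epsilon^2\bar v$ into $-\tfrac{c_1^2}{6}\,\epsilon^2\int_{S^{n-1}}\mathring{Ric}_p(\Theta,\Theta)\,\bar v$. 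These are exactly the terms displayed in Proposition~\ref{lambdadl1}.

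The only remaining task is to verify that everything not retained above is of the form $\mathcal{O}(\epsilon^5)+\epsilon^3\,L_p(\bar v)+\epsilon^2\,Q_p^{(2)}(\bar v)+Q_p^{(3)}(\bar v)$, with constants uniform in $p$ and in $\epsilon\in(0,1)$. I would sort the Taylor remainder by homogeneity in $\bar v$. The part independent of $\bar v$ beyond order $\epsilon^4$ is $\mathcal{O}(\epsilon^5)$, which is exactly the statement $\hat\lambda(p,\epsilon,0)=\lambda_1+\hat\Lambda_0\epsilon^2+\hat\Lambda\epsilon^4+\mathcal{O}(\epsilon^5)$ already proved in Lemma \ref{exp1}. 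The part linear in $\bar v$: since the $\bar v$ and $\epsilon\bar v$ derivatives at $(0,0)$ vanish (Lemmas \ref{exp2} and \ref{exp3}), the first $\bar v$-linear contribution beyond the retained $\epsilon^2$-order one carries a factor $\epsilon^3$ and acts on $\bar v$ as a bounded first-order linear operator by smoothness of $\hat\lambda$, hence it is an $\epsilon^3\,L_p(\bar v)$; uniformity in $p$ is inherited from the explicit polynomial-in-curvature structure of the $\epsilon$-expansion coefficients, exactly as for the volume expansion (\ref{volpacxu}) imported from \cite{Pac-Xu}. The part at least quadratic in $\bar v$: at $\epsilon=0$ this is the Taylor remainder, starting at cubic order, of the Euclidean volume-normalized map $\bar v\mapsto\lambda_{B_{1+v_0(p,0,\bar v)+\bar v}}$, hence a $Q_p^{(3)}(\bar v)$, and the $\epsilon$-corrections to the $\bar v$-quadratic form come only with a factor $\epsilon^2$, yielding the $\epsilon^2\,Q_p^{(2)}(\bar v)$ term.

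The main obstacle is precisely this last bookkeeping: one must rule out a genuine $\epsilon\,\bar v^2$ contribution (as well as $\epsilon\,\bar v$ and $\epsilon^3$), since none of these fits the four error classes. The $\epsilon\,\bar v$ and $\epsilon^3$ cases are already disposed of by the vanishing statements $\partial_\epsilon\partial_{\bar v}\hat\lambda(p,0,0)=0$ and $\partial_\epsilon^3\hat\lambda(p,0,0)=0$. The $\epsilon\,\bar v^2$ case requires $\partial_\epsilon\partial_{\bar v}^2\hat\lambda(p,0,0)=0$, which I would obtain from the structure of the blow-up: at $\epsilon=0$ the metric $\hat g$ is the pullback of the Euclidean metric under a radial diffeomorphism determined by $\bar v$, and its first $\epsilon$-correction is $O(\epsilon^2)$ because the odd, degree-three term of the normal-coordinate expansion (\ref{expmetric}) is an odd function of the coordinates and integrates to zero against the even quantities built from $\phi_1$ that enter the quadratic form; consequently the first $\epsilon$-correction to the Hessian of $\hat\lambda$ in $\bar v$ is $O(\epsilon^2)$, its $\bar v$-linear part being the Ricci term of Lemma \ref{exp3} and its $\bar v$-quadratic part being harmlessly absorbed into $\epsilon^2\,Q_p^{(2)}(\bar v)$. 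Once this parity observation is in place, Proposition~\ref{lambdadl1} is the pure assembly of Lemmas \ref{exp1}--\ref{exp3} described above, and a convenient consistency check is that setting $\bar v=0$ recovers the expansion of Lemma \ref{exp1}.
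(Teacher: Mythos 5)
Your proposal takes the same route as the paper, whose own proof of Proposition~\ref{lambdadl1} is precisely "put together Lemmas~\ref{exp1}, \ref{exp2} and \ref{exp3}": you substitute the computed derivatives into the displayed Taylor expansion and account for the remainder. Your extra concern about a possible $\epsilon\,\bar v^2$ contribution (which the paper's one-line proof glosses over) is legitimate, because the proposition is stated for $\epsilon$ and $\bar v$ independently small and the error class $\epsilon^2\,Q_p^{(2)}(\bar v)$ cannot absorb an $\epsilon\,\bar v^2$ term. Your conclusion that it vanishes is correct, but the justification you give is off target: you appeal to the \emph{odd, degree-three} term of (\ref{expmetric}) integrating to zero against even quantities, which is irrelevant here since that term only enters at order $\epsilon^3$. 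The correct and simpler reason is that the normal-coordinate expansion (\ref{expmetric}) has no degree-one term at all — it is $\delta_{ij}+\mathcal{O}(|x|^2)$ — so after the rescaling $x=\epsilon y$ both $\hat g$ and the volume-normalization constant $v_0(p,\epsilon,\bar v)$ depend on $\epsilon$ only from order $\epsilon^2$ on, for every fixed $\bar v$. Hence $\partial_\epsilon \hat\lambda(p,0,\bar v)\equiv 0$ identically in $\bar v$, and in particular $\partial_\epsilon\partial_{\bar v}^2\hat\lambda(p,0,0)=0$, which closes the gap you identified. With that correction the assembly is exactly as the paper intends.
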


\begin{proof} It suffices to put together the results of Lemmas \ref{exp1}, \ref{exp2} and \ref{exp3}.
\end{proof}

\section{Localisation of the obtained extremal domains}\label{localisationD}

Now we want to find the expansion of the function $\Psi_\epsilon(p)$ in power of $\epsilon$. Recall that
\[
\Psi_\epsilon(p) = \hat \lambda (p,\epsilon,\bar v(p,\epsilon))
\]
In order to find such expansion we will relate the first term in the expansion of $\bar v(p, \epsilon)$ to the curvature of the manifold at $p$. 

\medskip

The first term of the expansion of $\bar v(p, \epsilon)$ is related to the traceless Ricci curvature at $p$, as stated by the following:

\begin{Proposition}\label{propvbar} We have
\[ 
\bar v(p, \epsilon) =  -\frac{c_1}{12\, \alpha_2}\, \mathring Ric_p(\Theta, \Theta)\, \epsilon^2 + \mathcal{O}(\epsilon^3)= \frac{n}{12\, (\lambda_1 - n)}\, \mathring Ric_p(\Theta, \Theta)\, \epsilon^2 + \mathcal{O}(\epsilon^3)
\]
where $\Theta$ is the vector of $T_p M$ whose geodesic coordinates are $y \in S^{n-1}$, according with (\ref{nottheta}), and $\alpha_2$ is the eigenvalue of the operator $H$ defined in Proposition \ref{H} associated to the eigenspace $V_2$.
\end{Proposition}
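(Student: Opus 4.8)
The strategy is to identify the order-$\epsilon^2$ coefficient of $\bar v(p,\epsilon)$ by plugging its known expansion into the fixed-point equation \eqref{Fa} and matching the $\epsilon^2$ terms. From Proposition \ref{solmodnoyau} we know $\bar v(p,\epsilon) = \mathcal{O}(\epsilon^2)$, so write $\bar v(p,\epsilon) = \epsilon^2 w + \mathcal{O}(\epsilon^3)$ for some $w \in C^{2,\alpha}_m(S^{n-1})$ orthogonal to $V_0 \oplus V_1$, to be determined. The plan is to differentiate the relation $F(p,\epsilon,\bar v(p,\epsilon)) + \langle a(p,\epsilon),\cdot\rangle = 0$ appropriately in $\epsilon$ at $\epsilon = 0$, or equivalently to expand $F$ itself to order $\epsilon^2$. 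Recall $H$ is the linearization of $F$ in $\bar v$ at $(\epsilon,\bar v) = (0,0)$, so to leading order $F(p,\epsilon,\bar v(p,\epsilon)) = H(\bar v(p,\epsilon)) + \big(\text{terms from } \partial_\epsilon^2 F \text{ and } \partial_\epsilon\partial_{\bar v}F\big) + \text{higher order}$.

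The key computational input is the expansion of $\hat\lambda$ already assembled in Proposition \ref{lambdadl1}: since $\hat\lambda$ is the quantity whose variational structure controls $F$ (through Proposition \ref{lambda}), the $\epsilon^2$-part of the cross term $\partial_\epsilon\partial_{\bar v}F$ is governed precisely by the term $-\frac{c_1^2}{6}\,\epsilon^2\int_{S^{n-1}}\mathring{Ric}_p(\Theta,\Theta)\,\bar v$ appearing there (this is Lemma \ref{exp3}). More precisely, the Euler--Lagrange/variational identity linking $F$ to $\hat\lambda$ shows that matching the coefficient of $\epsilon^2$ in $F(p,\epsilon,\bar v(p,\epsilon)) + \langle a,\cdot\rangle = 0$, and using that $\langle a,\cdot\rangle \in V_1$ while the forcing term $\mathring{Ric}_p(\Theta,\Theta)$ — a restriction of a quadratic form on $\mathbb{R}^n$ — decomposes into components in $V_0 \oplus V_2$ only, and in fact lies in $V_2$ after subtracting its mean (its trace is zero because $\mathring{Ric}$ is traceless), yields an equation of the form
\[
H(w) = \frac{c_1}{6}\,\mathring{Ric}_p(\Theta,\Theta)
\]
on $S^{n-1}$, up to the constant bookkeeping coming from the precise normalization in \eqref{c00}–\eqref{Hache}. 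Since $\mathring{Ric}_p(\Theta,\Theta) \in V_2$ and $H$ restricted to $V_2$ is multiplication by the eigenvalue $\alpha_2$, one inverts to get $w = \frac{c_1}{6\,\alpha_2}\,\mathring{Ric}_p(\Theta,\Theta)$; I would then double-check the sign against the definition of $H$ in \eqref{Hache} (the $-c_1\bar v$ Dirichlet datum in \eqref{c00} produces the factor $-\frac{c_1}{12\,\alpha_2}$ claimed). The second equality in the statement then follows from an explicit evaluation $\alpha_2 = \alpha_2(n,\lambda_1)$: one computes the eigenvalue of $H$ on $V_2$ by solving the ODE \eqref{c00} with boundary data in $V_2$ (i.e. a spherical harmonic of degree $2$), using that the relevant radial solution is built from Bessel-type functions / the explicit $\phi_1$, and that $c_1 = \partial_r\phi_1|_{r=1}$, $c_2 = \partial_r^2\phi_1|_{r=1} = -(n-1)c_1$; plugging in $\lambda_1$ gives $-\frac{c_1}{12\,\alpha_2} = \frac{n}{12(\lambda_1 - n)}$.

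The main obstacle is twofold. First, one must justify that the only contribution to the $\epsilon^2$-coefficient of $F(p,\epsilon,\bar v(p,\epsilon))$ that is linear-algebraically relevant comes through the bilinear $\partial_\epsilon\partial_{\bar v}$ piece computed in Lemma \ref{exp3}: the pure $\partial_\epsilon^2 F$ term at $\bar v = 0$ must be shown to lie in $V_0 \oplus V_1$ (so it is absorbed by the mean subtraction in $F$ and by the $\langle a,\cdot\rangle$ term), which reflects the rotational symmetry of the $\epsilon^2$-correction to the metric around $p$ being itself only a degree-$\le 1$ object after the relevant projections — this requires care with the cutoff function $\chi$ and the definition of $\hat g$. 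Second, and more delicate, is passing from the \emph{scalar} identity for $\hat\lambda$ in Proposition \ref{lambdadl1} to an \emph{operator} identity for $F$: Proposition \ref{lambdadl1} only records $\int_{S^{n-1}}\bar v\,H(\bar v)$ and $\int_{S^{n-1}}\mathring{Ric}_p(\Theta,\Theta)\,\bar v$, i.e. the pairing of $F$ against $\bar v$, so to pin down $w$ one exploits that $\bar v(p,\epsilon)$ is the actual solution, i.e. that the full $F(p,\epsilon,\bar v(p,\epsilon))$ (not just its pairing with $\bar v$) equals $-\langle a,\cdot\rangle \in V_1$; combined with $w \perp V_0\oplus V_1$ and the invertibility estimate of Proposition \ref{H} on that orthogonal complement, this determines $w$ uniquely and forces the stated formula. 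The estimate $\mathcal{O}(\epsilon^3)$ for the remainder is then immediate from the smoothness in $(p,\epsilon)$ asserted in Proposition \ref{solmodnoyau} together with the fact that the next term in all the relevant expansions is $\mathcal{O}(\epsilon^3)$.
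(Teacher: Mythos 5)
Your high-level architecture matches the paper's: expand $\bar v(p,\epsilon) = w\,\epsilon^2 + \mathcal{O}(\epsilon^3)$ with $w\perp V_0\oplus V_1$, expand $F$ in the relation $F(p,\epsilon,\bar v(p,\epsilon)) + \langle a,\cdot\rangle = 0$, isolate the $\epsilon^2$-coefficient, observe that $\mathring{Ric}_p(\Theta,\Theta)\in V_2$ so that $H$ acts on it by multiplication by $\alpha_2$, conclude $a_p=0$ because $\langle a_p,\cdot\rangle$ sits both in $\ker H=V_1$ and in $\operatorname{im}H\perp V_1$, and invert. The appeal to $\alpha_2=\frac{(n-\lambda_1)}{n}c_1$ and to $c_1$ from the appendices to get the second equality is also the paper's route.

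There are, however, two genuine problems in the body of your plan.

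First, you misattribute the forcing term. You write that the relevant contribution ``comes through the bilinear $\partial_\epsilon\partial_{\bar v}$ piece'' and that ``the pure $\partial_\epsilon^2 F$ term at $\bar v=0$ must be shown to lie in $V_0\oplus V_1$.'' This is backwards. Since $\bar v(p,\epsilon)=\mathcal{O}(\epsilon^2)$, the cross term $\partial_\epsilon\partial_{\bar v}F\cdot\epsilon\cdot\bar v$ is $\mathcal{O}(\epsilon^3)$ and contributes nothing at order $\epsilon^2$; the \emph{only} order-$\epsilon^2$ forcing, besides $H(\bar v)$, is $\tfrac12\partial_\epsilon^2F(p,0,0)\,\epsilon^2$. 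And it does \emph{not} lie in $V_0\oplus V_1$ -- it is precisely $\frac{c_1}{12}\mathring{Ric}_p(\Theta,\Theta)\,\epsilon^2\in V_2$, which is what sources the nontrivial $\bar v_p$. Your correct intuition that the coefficient $-\frac{c_1^2}{6}\epsilon^2\int\mathring{Ric}_p(\Theta,\Theta)\,\bar v$ in Proposition~\ref{lambdadl1} / Lemma~\ref{exp3} encodes the forcing is fine, but the translation from $\hat\lambda$ to $F$ goes as follows: $\partial_{\bar v}\hat\lambda$ is, via Proposition~\ref{lambda}, the $L^2$-pairing of (essentially) $F$ against the test direction; hence $\partial_\epsilon^2\partial_{\bar v}\hat\lambda$ pairs the test direction against $\partial_\epsilon^2 F$, \emph{not} against $\partial_\epsilon\partial_{\bar v}F$. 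Had $\partial_\epsilon^2 F(p,0,0)$ really lain in $V_0\oplus V_1$, you would conclude $\bar v_p=0$, contradicting the statement you are trying to prove. (The paper avoids the pairing detour altogether: it reuses the pointwise expansion of $\hat g(\hat\nabla\hat\phi,\hat\nu)$ already computed in the proof of Lemma~\ref{exp3} to read off $F(p,\epsilon,0)=\frac{c_1}{12}\epsilon^2\mathring{Ric}_p(\Theta,\Theta)+\mathcal{O}(\epsilon^3)$ directly. Your pairing reconstruction is a sound alternative, but only once the above misidentification is corrected.)

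Second, the constants are not actually established. You write $H(w)=\frac{c_1}{6}\mathring{Ric}_p(\Theta,\Theta)$ and then defer to ``double-check the sign,'' asserting that the result $-\frac{c_1}{12\alpha_2}$ will come out. But the correct matched equation is $H(\bar v_p)+\frac{c_1}{12}\mathring{Ric}_p(\Theta,\Theta)=-\langle a_p,\cdot\rangle$, i.e.\ $H(\bar v_p)=-\frac{c_1}{12}\mathring{Ric}_p(\Theta,\Theta)$ after killing $a_p$; your provisional right-hand side is off by a factor of $-2$ (the $\tfrac12$ from Taylor's formula and a sign). A proof cannot leave this to a ``double-check'' -- the factor $\frac{c_1}{12}$, not $\frac{c_1}{6}$, is exactly what makes the final formula $\bar v_p=-\frac{c_1}{12\alpha_2}\mathring{Ric}_p(\Theta,\Theta)=\frac{n}{12(\lambda_1-n)}\mathring{Ric}_p(\Theta,\Theta)$, and you should carry the $\tfrac12$ and the sign through explicitly rather than assert they will come out right.

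Minor: your worry about the cutoff $\chi$ and $\hat g$ is moot once the forcing is correctly located -- the expansion of $F(p,\epsilon,0)$ only sees the boundary, where $\chi\equiv 1$. The orthogonality of $\mathring{Ric}_p(\Theta,\Theta)$ to $V_0\oplus V_1$ is, as you note, because it is the restriction of a traceless homogeneous quadratic, hence a mean-zero eigenfunction of $-\Delta_{S^{n-1}}$ with eigenvalue $2n$.
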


\begin{proof} 
Let us recall that
$$
 F(p,\epsilon,\bar v(p, \epsilon))+\langle a(p, \epsilon),\cdot\rangle=0.
$$
where 
\[
\| \bar v (p, \epsilon) \|_{C^{2,\alpha}(S^{n-1})} + \|a(p, \epsilon)\| \leq c \, \epsilon^2
\]
Now, because $F(p,\epsilon,0)=\mathcal{O}(\epsilon^2)$ and because $\bar v(p, \epsilon)= \mathcal{O}(\epsilon^2)$, we can write
\[
\begin{array}{lll}
F (p,\epsilon,\bar v) & = & F (p,0,0)+\partial_\epsilon F (p,0,0)\, \epsilon\\[4mm]
&& \displaystyle + \partial_{\bar v}F (p,0,0)\, \bar v+\frac12\partial_\epsilon^2F (p,0,0)\,\epsilon^2+\mathcal{O}(\epsilon^3)\\[4mm]
&=&\displaystyle H(\bar v)+\frac12\, \partial_\epsilon^2F (p,0,0)\, \epsilon^2+\mathcal{O}(\epsilon^3)\\
\end{array}
\]
In the computation of the mixed derivatives of $\hat{\lambda}$ in the proof of Lemma \ref{exp3} we have already computed the expansion of $\hat g(\nabla \hat{\phi},\hat{\nu})$
for $\bar v=0$, so we directly deduce
\[
\begin{array}{lll}
F(p,\epsilon,0)&=& \displaystyle \epsilon^2\, \frac{c_1}{12}\, \left[R_{ij}(p)\,y^i\,y^j-\frac {R_p}{n}\right]+\mathcal{O}(\epsilon^3)\\[4mm]
&=&\displaystyle \epsilon^2\, \frac{c_1}{12}\, \mathring Ric_p(\Theta, \Theta)+\mathcal{O}(\epsilon^3).
\end{array}
\]
Then we have
\[
\partial_\epsilon^2F (p,0,0) = \frac{c_1}{6}\, \mathring Ric_p(\Theta, \Theta)
\]

\medskip

Writing
$$
a=a_p\, \epsilon^2+\mathcal{O}(\epsilon^3)
$$
and
 $$
\bar v=\bar v_p\, \epsilon^2+\mathcal{O}(\epsilon^3) \, 
$$
and considering the expansion of $F$, from equation (\ref{Fa}) we obtain
\begin{equation}
\label{HRic}
H(\bar v_p)+\frac{c_1}{12}\,\mathring Ric_p(\Theta, \Theta)=-\langle a_p,\cdot\rangle
\end{equation}
We know that $\bar v$, and hence $\bar v_p$, is $L^2$-orthogonal to $V_0\oplus V_1$ (see Propositions \ref{solmodnoyau}). Observe that $Ric(\Theta, \Theta)$ is $L^2(S^{n-1})$-orthogonal to $V_1$ since the function $\Theta \to Ric(\Theta, \Theta)$ is invariant when $\Theta$ is changed into $-\Theta$ and hence its $L^2$-projection over elements of the form $g(\Xi, \Theta)$ is 0 for every $\Xi$. Then $\mathring Ric(\Theta, \Theta)$ is $L^2(S^{n-1})$-orthogonal to $V_0\oplus V_1$. In fact $\mathring Ric(\Theta, \Theta)$ is the restriction on $S^{n-1}$ of a homogeneous polynomial of degree 2 which has mean 0, and then it is an eigenfunction for $-\Delta_{S^{n-1}}$ with eigenvalue $2n$. As $H$ preserves the eigenspaces of 
$-\Delta_{S^{n-1}}$ and his kernel is given by $V_1$ (see Proposition \ref{H}), we have that there exists a constant $\alpha_2\neq0$ such that
$$H\left(\mathring Ric(\Theta, \Theta)\right)=\alpha_2\,\mathring Ric(\Theta, \Theta)$$
From (\ref{HRic}) we obtain
$$
 -\langle a_p,\cdot\rangle = H\left(\bar v_p+\frac{c_1}{12\alpha_2}\, \mathring Ric(\Theta, \Theta)\right)
$$ 
i.e. $ \langle a_p,\cdot\rangle$ is in the image of $H$. But
it belongs also to the kernel of $H$, and then $a_p=0$ and 
\begin{equation}
\label{HRic2}
H\left(v_p+\frac{c_1}{12\alpha_2}\, \mathring Ric(\Theta, \Theta)\right)=0
\end{equation}
Now we remark that $\displaystyle \left(v_p+\frac{c_1}{12\alpha_2}\,\mathring Ric(\Theta, \Theta)\right)$ is orthogonal to $V_0\oplus V_1$, and then
\begin{equation}
\label{vRic2}
v_p=-\frac{c_1}{12\alpha_2}\,\mathring Ric(\Theta, \Theta)
\end{equation}

In order to complete the proof of the proposition we use equation (\ref{alpha2}) and Lemma \ref{lemma_c1} of the Appendix.
\end{proof}

Now we are able to give an expansion for the function $\Psi_\epsilon(p)$ in power of $\epsilon$. 

\begin{Proposition} 
We have:
\begin{equation}\label{rrr}
\displaystyle \Psi_\epsilon(p)  = 
 \lambda_1+\frac{\hat{\Lambda}_0}{R_p}\, \epsilon^2\, \left(R_p+{{\bf r}_p}\, \epsilon^2\right)+\mathcal{O}(\epsilon^5)
\end{equation}
where $\hat \Lambda_0$ is defined in Lemma \ref{exp1} (note that $\frac{\hat{\Lambda}_0}{R_p}$ is well defined also when $R_p=0$), and the function ${\bf r}$ can be written as
\[
{\bf r} = K_1\, \|Riem\|^2+K_2\, \|Ric\|^2+K_3\, R^2+K_4\, \Delta_g R
\]
for some constants $K_i$ only depending on $n$.
\end{Proposition}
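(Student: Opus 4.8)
The plan is to substitute into the expansion of $\hat\lambda(p,\epsilon,\bar v)$ from Proposition~\ref{lambdadl1} the specific function $\bar v=\bar v(p,\epsilon)$ produced by Proposition~\ref{solmodnoyau}, and to collect all contributions up to order $\epsilon^4$. From Proposition~\ref{propvbar} we know that $\bar v(p,\epsilon)=\bar v_p\,\epsilon^2+\mathcal{O}(\epsilon^3)$, where $\bar v_p=-\frac{c_1}{12\,\alpha_2}\,\mathring{Ric}_p(\Theta,\Theta)$ belongs to $V_2$ and therefore satisfies $H(\bar v_p)=\alpha_2\,\bar v_p$. Since $\|\bar v(p,\epsilon)\|_{C^{2,\alpha}(S^{n-1})}=\mathcal{O}(\epsilon^2)$, the uniform estimates defining the symbols $L_p$ and $Q_p^{(a)}$ give at once $\epsilon^3L_p(\bar v)=\mathcal{O}(\epsilon^5)$, $\epsilon^2Q_p^{(2)}(\bar v)=\mathcal{O}(\epsilon^6)$ and $Q_p^{(3)}(\bar v)=\mathcal{O}(\epsilon^6)$, so the whole bracketed remainder in Proposition~\ref{lambdadl1} is $\mathcal{O}(\epsilon^5)$.

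It then remains to evaluate the two $\bar v$-dependent terms of Proposition~\ref{lambdadl1}. Using $H(\bar v)=\alpha_2\bar v_p\,\epsilon^2+\mathcal{O}(\epsilon^3)$ one gets
\[
-c_1\int_{S^{n-1}}\bar v\,H(\bar v)=-c_1\,\alpha_2\,\epsilon^4\int_{S^{n-1}}\bar v_p^2+\mathcal{O}(\epsilon^5)=-\frac{c_1^3}{144\,\alpha_2}\,\epsilon^4\int_{S^{n-1}}\big(\mathring{Ric}_p(\Theta,\Theta)\big)^2+\mathcal{O}(\epsilon^5),
\]
and, in the same way,
\[
-\frac{c_1^2}{6}\,\epsilon^2\int_{S^{n-1}}\mathring{Ric}_p(\Theta,\Theta)\,\bar v=\frac{c_1^3}{72\,\alpha_2}\,\epsilon^4\int_{S^{n-1}}\big(\mathring{Ric}_p(\Theta,\Theta)\big)^2+\mathcal{O}(\epsilon^5).
\]
Adding these to the term $\hat\Lambda\,\epsilon^4$ of Proposition~\ref{lambdadl1} produces
\[
\Psi_\epsilon(p)=\lambda_1+\hat\Lambda_0\,\epsilon^2+\Big(\hat\Lambda+\frac{c_1^3}{144\,\alpha_2}\int_{S^{n-1}}\big(\mathring{Ric}_p(\Theta,\Theta)\big)^2\Big)\,\epsilon^4+\mathcal{O}(\epsilon^5).
\]

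Finally I would recast the $\epsilon^4$ coefficient as $\frac{\hat\Lambda_0}{R_p}\,{\bf r}_p$. By Lemma~\ref{exp1}, $\hat\Lambda_0=-\frac{R_p}{6}\big(1+\frac{2\lambda_1}{n(n+2)}\big)$, so $\frac{\hat\Lambda_0}{R_p}=-\frac{n(n+2)+2\lambda_1}{6n(n+2)}$ is a nonzero constant depending only on $n$ (in particular the product $\frac{\hat\Lambda_0}{R_p}{\bf r}_p$ makes sense even where $R_p=0$); it therefore suffices to show that the $\epsilon^4$ coefficient above is a linear combination, with coefficients depending only on $n$, of $\|Riem_p\|^2$, $\|Ric_p\|^2$, $R_p^2$ and $(\Delta_g R)_p$. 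The quantity $\hat\Lambda$ has this shape by Lemma~\ref{exp1} together with the explicit formulas (\ref{W}) and (\ref{constL}) for $W$ and $\Lambda$, all of whose constants — including $c^2$ and $\lambda_1$ — depend only on $n$. For the remaining term I would use the standard spherical average
\[
\int_{S^{n-1}}\Theta^i\Theta^j\Theta^k\Theta^\ell\,\mathrm{dvol}=\frac{\omega_n}{n(n+2)}\big(\delta^{ij}\delta^{k\ell}+\delta^{ik}\delta^{j\ell}+\delta^{i\ell}\delta^{jk}\big),
\]
which, since $\mathring{Ric}_p$ is symmetric and trace-free, gives $\int_{S^{n-1}}\big(\mathring{Ric}_p(\Theta,\Theta)\big)^2=\frac{2\,\omega_n}{n(n+2)}\,\|\mathring{Ric}_p\|^2=\frac{2\,\omega_n}{n(n+2)}\big(\|Ric_p\|^2-\frac1n R_p^2\big)$. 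Substituting, and recalling that $\alpha_2$, being an eigenvalue of the $n$-dependent operator $H$, depends only on $n$ (see (\ref{alpha2}) and Lemma~\ref{lemma_c1}), shows that the $\epsilon^4$ coefficient has precisely the required form; dividing it by the constant $\frac{\hat\Lambda_0}{R_p}$ then \emph{defines} the function ${\bf r}=K_1\|Riem\|^2+K_2\|Ric\|^2+K_3R^2+K_4\Delta_g R$ with $K_i=K_i(n)$, and (\ref{rrr}) follows.

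The only genuinely delicate point is the bookkeeping: one must make sure that, in passing from $\hat\lambda(p,\epsilon,\bar v)$ with a free $\bar v$ to $\Psi_\epsilon(p)=\hat\lambda(p,\epsilon,\bar v(p,\epsilon))$, no contribution of order $\epsilon^4$ is lost. This stays under control exactly because $\bar v(p,\epsilon)=\mathcal{O}(\epsilon^2)$ is \emph{quadratic} in $\epsilon$: a term linear in $\bar v$ with prefactor $\epsilon^k$ contributes at order $\epsilon^{k+2}$, a term quadratic in $\bar v$ at order $\epsilon^{k+4}$, and cubic terms only at order $\epsilon^6$. Consequently only the $\hat\Lambda\,\epsilon^4$ term together with the two terms handled above survive at order $\epsilon^4$, and everything else reduces to routine manipulation of the expansions already established.
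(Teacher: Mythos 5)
Your argument is correct and is essentially the same as the paper's: you substitute $\bar v(p,\epsilon)=\bar v_p\,\epsilon^2+\mathcal{O}(\epsilon^3)$ into Proposition~\ref{lambdadl1}, use $H(\bar v_p)=\alpha_2\bar v_p$ together with $\bar v_p=-\tfrac{c_1}{12\alpha_2}\mathring{Ric}_p(\Theta,\Theta)$, and evaluate the resulting integral by the fourth-moment formula on $S^{n-1}$, exactly as the paper does (the paper merely merges your two integrals into one before inserting $\bar v_p$, and then records the explicit $K_i$). The bookkeeping of the remainder terms via the $L_p$ and $Q_p^{(a)}$ estimates also matches the paper's implicit justification.
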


\begin{proof}
Replacing $\bar v$ with its expansion given by Proposition \ref{propvbar} in the expansion of $\hat \lambda$ given by Proposition \ref{lambdadl1}, we obtain


\[
\begin{array}{lll}
\displaystyle \Psi_\epsilon(p)  &= &
\displaystyle \lambda_1+\hat{\Lambda}_0\, \epsilon^2+\hat{\Lambda}\, \epsilon^4
 -c_1\int_{S^{n-1}}\bar v\left(H(\bar v)+\frac{c_1}{6}\, \epsilon^2\, \mathring{Ric}_p(\Theta,\Theta)\right)
 +\mathcal{O}(\epsilon^5)\\[4mm]
 &= &
 \displaystyle \lambda_1+\hat{\Lambda}_0\, \epsilon^2+\hat{\Lambda}\, \epsilon^4
  -c_1\int_{S^{n-1}}\bar v\left(\alpha_2\, \bar v+\frac{c_1}{6}\, \epsilon^2\, \mathring{Ric}_p(\Theta,\Theta)\right)
   +\mathcal{O}(\epsilon^5)\\[4mm]
    &= &
    \displaystyle \lambda_1+\hat{\Lambda}_0\, \epsilon^2+\hat{\Lambda}\, \epsilon^4+
     \frac{c_1^3}{144\, \alpha_2}\, \epsilon^4\, \int_{S^{n-1}}(\mathring{Ric}_p(\Theta,\Theta))^2
      +\mathcal{O}(\epsilon^5)\\[4mm]
      &= &
          \displaystyle \lambda_1+\hat{\Lambda}_0\, \epsilon^2+\hat{\Lambda}\, \epsilon^4+
           \frac{c_1^3\, \omega_n}{72\, \alpha_2\, n(n+2)}\, \epsilon^4\, \left(\|Ric_p\|^2-\frac 1n \, R_p^2\right)
            +\mathcal{O}(\epsilon^5) \\[4mm]
        &= &
                 \displaystyle \lambda_1+\hat{\Lambda}_0\, \epsilon^2+\hat{\Lambda}\, \epsilon^4+
                  \frac{\lambda_1}{36(n+2)(n-\lambda_1)}\, \epsilon^4\, \left(\|Ric_p\|^2-\frac 1n \, R_p^2\right)
                   +\mathcal{O}(\epsilon^5)\\[4mm]
\end{array}
\]
where we used (\ref{vRic2}) from the second to the third line, the following two geometric formulas 
\begin{eqnarray*}
\int_{S^{n-1}} Ric(\Theta,\Theta) & = & \frac{\omega_n}{n}\, R_p \\
\int_{S^{n-1}} (Ric(\Theta,\Theta))^2 & = & \frac{\omega_n}{n(n+2)}\,  (2 \|Ric_p\|^2 + R_p^2) \, ,
\end{eqnarray*}
whose proofs can be found in \cite{Pac-Xu}, from the third to the fourth line, and the computation of $\alpha_2$ given in (\ref{alpha2}) and Lemma \ref{lemma_c1} to deduce the last line.
Define
$$
\begin{array}{lll}
{\bf r}_p&=&\displaystyle R_p\, \hat{\Lambda}_0^{-1}\, \left[\hat{\Lambda}+\frac{\lambda_1}{36(n+2)(n-\lambda_1)}\, \left(\|Ric_p\|^2-\frac 1n R_p^2\right)\right]\\[4mm]
&=&\displaystyle R_p\, \hat{\Lambda}_0^{-1}\, \left[\Lambda + \lambda_1\, \left( \frac{2\, W}{\omega_n} - \frac{R_p^2}{36\, n^2\, (n+2)} \right)
+\frac{\lambda_1}{36(n+2)(n-\lambda_1)}\,  \left(\|Ric_p\|^2-\frac 1n \, R_p^2\right)\right]\\
\end{array}
$$
Recalling the definition of $W$ and $\Lambda$ given in (\ref{W}) and (\ref{constL}), we obtain that
\[
{\bf r}_p = K_1\, \|Riem_p\|^2+K_2\, \|Ric_p\|^2+K_3\, R_p^2+K_4\, (\Delta_g R)_p
\]
where
\begin{equation}\label{constKi}
\begin{array}{lll}
K_1 & =  & \displaystyle \frac{1}{n\, (n+2) + 2\, \lambda_1}\, \left(18\, c^2 + \frac{\lambda_1}{10(n+4)}\right)\\[4mm]
K_2 & = &\displaystyle \frac{1}{n\, (n+2) + 2\, \lambda_1}\, \left(\frac{35}{3}\, c^2 + \frac{4\, \lambda_1}{15(n+4)}+\frac {n\lambda_1}{6(\lambda_1-n)}\right)\\[4mm]
K_3 & = & \displaystyle\frac{1}{n\, (n+2) + 2\, \lambda_1}\, \left(\frac{5n-3}{3n}\, c^2 - \frac{\lambda_1}{6(n+4)} + \frac{\lambda_1}{6n}-\frac {\lambda_1}{6(\lambda_1-n)}\right)\\[4mm]
K_4 & =  &\displaystyle \frac{1}{n\, (n+2) + 2\, \lambda_1}\, \left(\frac{6}{5}\, c^2 + \frac{3\, \lambda_1}{5(n+4)}\right)
\end{array}
\end{equation}
and formula (\ref{rrr}) follows at once. The fact that the constants $K_i$ depend only on $n$ comes immediately from the computation of $c^2$ by Lemma \ref{lemma_c} in the Appendix:
$$
c^2=\frac{(n+2)\, [2\lambda_1 + n(n-4)]}{12\, \lambda_1\, \omega_n}
$$
This completes the proof of the proposition.
\end{proof}
\begin{Remark}
We remark that
$K_1>0$ in order to justify our discussion about critical point of $\|Riem\|$ for Einstein metrics in the introduction. 
\end{Remark}

Now recalling that
$$
\Phi(p,\epsilon)=R_p\, \hat{\Lambda}_0^{-1}\, \frac{\Psi_\epsilon(p)-\lambda_1}{\epsilon^2}=-\frac{6n(n+2)}{n(n+2)+2\lambda_1}\frac{\Psi_\epsilon(p)-\lambda_1}{\epsilon^2}
$$
the proof of the second and third part of Theorem \ref{maintheorem} follows at once.

\section{Appendix I : On the first eigenfunction in the unit Euclidean ball}

In this Appendix we state and prove some relations between the first eigenfunction and the first eigenvalue of the Dirichlet Laplacian on the unit ball.

\begin{Lemma}\label{lemma_c1}
Let
\[
c_1 = \phi_1'(1)
\]
where $x \to \phi_1(|x|)$ is the first eigenfunction of the Dirichlet Laplacian on the unit ball, normalized in order to have $L^2$-norm equal to 1. Then 
$$
c_1=-\sqrt{\frac{2\lambda_1}{\omega_n}}
$$
where $\lambda_1$ is the first eigenvalue of the Dirichlet Laplacian on the unit ball.
\end{Lemma}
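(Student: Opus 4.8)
The plan is to obtain the value of $c_1$ from a Pohozaev--Rellich type integral identity, exploiting the fact that $\phi_1$ is radial and vanishes on $\partial B_1$, so that on the boundary all the information about $\nabla\phi_1$ reduces to the single constant $c_1$. Concretely, I would multiply the equation $\Delta\phi_1 + \lambda_1\,\phi_1 = 0$ by the radial field applied to $\phi_1$, namely $x\cdot\nabla\phi_1$, and integrate over $B_1$, so that
\[
\int_{B_1}(\Delta\phi_1)(x\cdot\nabla\phi_1)\,\textnormal{d}x + \lambda_1\int_{B_1}\phi_1\,(x\cdot\nabla\phi_1)\,\textnormal{d}x = 0 \, .
\]

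First I would treat the lower-order term: since $\phi_1\,(x\cdot\nabla\phi_1) = \tfrac12\,x\cdot\nabla(\phi_1^2)$, one integration by parts together with $\phi_1|_{\partial B_1}=0$ and $\|\phi_1\|_{L^2(B_1)}=1$ gives $\int_{B_1}\phi_1\,(x\cdot\nabla\phi_1) = -\tfrac{n}{2}$. Next, for the principal term I would integrate by parts once, use the elementary identity $\nabla\phi_1\cdot\nabla(x\cdot\nabla\phi_1) = |\nabla\phi_1|^2 + \tfrac12\,x\cdot\nabla|\nabla\phi_1|^2$, and integrate by parts a second time the divergence-type term; this produces $\big(1-\tfrac n2\big)\int_{B_1}|\nabla\phi_1|^2$ plus boundary integrals over $\partial B_1$ involving $(x\cdot\nu)\,|\nabla\phi_1|^2$ and $(x\cdot\nabla\phi_1)\,\partial_\nu\phi_1$. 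The Dirichlet energy is evaluated separately by testing the equation against $\phi_1$: $\int_{B_1}|\nabla\phi_1|^2 = \lambda_1\int_{B_1}\phi_1^2 = \lambda_1$.

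The boundary terms are where the hypotheses are used. On $\partial B_1 = S^{n-1}$ one has $\nu = x$ and $x\cdot\nu = 1$; since $\phi_1$ vanishes on the sphere its tangential gradient is zero, hence $\nabla\phi_1 = (\partial_\nu\phi_1)\,\nu$ with $\partial_\nu\phi_1 \equiv \phi_1'(1) = c_1$ constant. Thus each boundary integral collapses to a multiple of $c_1^2\,\omega_n$, where $\omega_n = |S^{n-1}|$. Assembling the three contributions in the identity above, all terms proportional to $n\,\lambda_1$ cancel and one is left with $\tfrac12\,c_1^2\,\omega_n = \lambda_1$, i.e. $c_1^2 = 2\lambda_1/\omega_n$. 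Finally, $\phi_1 > 0$ in $B_1$ and $\phi_1 = 0$ on $\partial B_1$ force $\phi_1'(1) < 0$ by the Hopf boundary lemma, so $c_1 = -\sqrt{2\lambda_1/\omega_n}$.

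I do not expect a genuine obstacle here: the only delicate point is the bookkeeping of the boundary terms in the Rellich identity and checking that the $\big(1-\tfrac n2\big)$ coefficient from the principal term and the factor $-\tfrac n2$ from $\int_{B_1}\phi_1\,(x\cdot\nabla\phi_1)$ combine so that the $n$-dependence disappears. One could alternatively expand $\phi_1$ in Bessel functions $J_{(n-2)/2}$ and compute $c_1$ and the $L^2$-normalization constant explicitly, but the Pohozaev route avoids special-function identities entirely and is shorter.
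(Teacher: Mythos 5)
Your proof is correct, and it is essentially the same approach as the paper's: the paper's computation $(r^n(\phi_1')^2)' = (2-n)\,r^{n-1}(\phi_1')^2 - 2\lambda_1\,r^n\phi_1'\phi_1$, integrated over $[0,1]$, is exactly the Pohozaev identity for the dilation field $x\cdot\nabla$ specialized to a radial function, which is what you derive in its multidimensional form. Both arguments feed in the same inputs — the $L^2$-normalization, the Dirichlet energy $\int_{B_1}|\nabla\phi_1|^2=\lambda_1$, and the Hopf boundary lemma for the sign — so the content is identical; the paper simply avoids the multidimensional boundary bookkeeping by working directly with the radial ODE.
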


\begin{proof}
Recall that $\phi_1$ is the solution of
$$
\phi_1''+\frac{n-1}r\phi_1'+\lambda_1\phi_1=0
$$
with normalization
 $$
 1=\int_{B_1}\phi_1^2(|x|)\, \textnormal{d}x=\omega_n\int_0^1(\phi_1)^2\, r^{n-1}\, \textnormal{d}r=-\frac{2\, \omega_n}{n}\, \int_0^1\phi_1\, \phi_1'\, r^{n}\, \textnormal{d}r
 $$
and
$$
\lambda_1=\int_{B_1}|\nabla \phi_1(|x|)|^2\, \textnormal{d}x=\omega_n\int_0^1(\phi_1')^2\, r^{n-1}\, \textnormal{d}r
$$
Now let us compute
\[
\begin{array}{lll}
(r^{n}(\phi_1')^2)' & = & n\, r^{n-1}\, (\phi_1')^2+2r^{n}\, \phi_1'\, \phi_1''\\[4mm]
&=&\displaystyle n\, r^{n-1}\,(\phi_1')^2-2r^{n}\,\phi_1'\, \left(\frac{n-1}r\,\phi_1'+\lambda_1\, \phi_1\right)\\[4mm]
&=&(2-n)\, r^{n-1}\, (\phi_1')^2-2\lambda_1\, r^{n}\, \phi_1'\, \phi_1\\
\end{array}
\]
Integrating this relation between 0 and 1 we obtain
$$
c_1^2=\frac{2\lambda_1}{\omega_n}
$$
The proof of the Lemma follows at once, keeping in mind that $c_1$ is negative.
\end{proof}

\begin{Lemma}\label{lemma_c}
Let 
$$
c^2 =\frac{n+2}2\int_0^1\phi_1^2\,r^{n+1} \, \textnormal{d}r
$$
where $x \to \phi_1(|x|)$ is the first eigenfunction of the Dirichlet Laplacian on the unit ball, normalized in order to have $L^2$-norm equal to 1. Then 
$$
c^2=\frac{(n+2)\, [2\lambda_1 + n(n-4)]}{12\, \lambda_1\, \omega_n}
$$
where $\lambda_1$ is the first eigenvalue of the Dirichelt Laplacian on the unit ball.
\end{Lemma}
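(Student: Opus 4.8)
The plan is to run a Rellich--Pohozaev type computation based on the radial equation
\[
\phi_1'' + \frac{n-1}{r}\, \phi_1' + \lambda_1\, \phi_1 = 0 ,
\]
together with the boundary data $\phi_1(1)=0$, $\phi_1'(1)=c_1$ and the regularity at the origin ($\phi_1$ smooth and radial, hence $\phi_1'(0)=0$). Testing this equation against suitable weights of the form $r^{k}\,\phi_1$ or $r^{k}\,\phi_1'$ and integrating over $[0,1]$, every boundary contribution at $r=0$ vanishes, and the only boundary term surviving at $r=1$ is the one producing $(\phi_1'(1))^2=c_1^2$, which by Lemma \ref{lemma_c1} equals $2\lambda_1/\omega_n$.

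First I would introduce the moments
\[
I_m := \int_0^1 \phi_1^2\, r^m\, \textnormal{d}r, \qquad J_m := \int_0^1 (\phi_1')^2\, r^m\, \textnormal{d}r ,
\]
so that $c^2 = \tfrac{n+2}{2}\, I_{n+1}$, and it suffices to compute $I_{n+1}$. The normalization, together with the computation recalled in the proof of Lemma \ref{lemma_c1}, gives $I_{n-1}=1/\omega_n$ and $J_{n-1}=\lambda_1/\omega_n$. Moreover, integrating $(\phi_1^2)'\,r^m$ by parts yields $\int_0^1 \phi_1\,\phi_1'\,r^m\,\textnormal{d}r = -\tfrac m2\, I_{m-1}$ for $m\ge 1$.

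Next, testing the equation against $r^{m+1}\phi_1$ and integrating by parts once gives, for all $m\ge 0$,
\[
\lambda_1\, I_{m+1} = J_{m+1} + \frac{m\,(n-2-m)}{2}\, I_{m-1} ,
\]
while testing against $r^{m}\phi_1'$ and integrating by parts gives, for all $m\ge 1$,
\[
c_1^2 + (2n-2-m)\, J_{m-1} - \lambda_1\, m\, I_{m-1} = 0 .
\]
The key observation is that specializing the first identity to $m=n$ and the second to $m=n+2$ closes up into a $2\times2$ linear system in exactly the two unknowns $I_{n+1}$ and $J_{n+1}$, with no other moments appearing:
\[
\lambda_1\, I_{n+1} = J_{n+1} - \frac{n}{\omega_n}, \qquad c_1^2 + (n-4)\, J_{n+1} - \lambda_1\,(n+2)\, I_{n+1} = 0 .
\]
Eliminating $J_{n+1}$ and inserting $c_1^2 = 2\lambda_1/\omega_n$ gives $6\lambda_1\, I_{n+1} = \bigl(2\lambda_1 + n(n-4)\bigr)/\omega_n$, whence
\[
c^2 = \frac{n+2}{2}\, I_{n+1} = \frac{(n+2)\,[2\lambda_1 + n(n-4)]}{12\,\lambda_1\,\omega_n} .
\]

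There is no genuine obstacle here: the whole argument is elementary integration by parts. The only points requiring care are the bookkeeping of the boundary terms (all of which vanish except the $c_1^2$ term at $r=1$), and, above all, the right choice of weights — one must notice that evaluating the two weighted identities at $m=n$ and $m=n+2$ respectively is precisely what makes the resulting linear system involve only $I_{n+1}$ and $J_{n+1}$, since other weights would drag in moments that cannot be evaluated in closed form.
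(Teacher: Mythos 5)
Your argument is correct and is essentially the paper's proof: testing the radial ODE against the weights $r^{n+1}\phi_1$ (your $m=n$) and $r^{n+2}\phi_1'$ (your $m=n+2$) and integrating by parts once yields exactly the two identities the paper obtains by integrating $(r^{n+1}\phi_1\phi_1')'$ and $(r^{n+2}(\phi_1')^2)'$ over $[0,1]$, and these are then closed with the normalization $I_{n-1}=1/\omega_n$ and Lemma~\ref{lemma_c1} in the same way. (Minor nit: your first moment identity actually requires $m\ge 1$, not $m\ge 0$, since $\int_0^1 r^{m}\phi_1\phi_1'\,\textnormal{d}r=-\tfrac m2 I_{m-1}$ picks up the boundary contribution $-\tfrac12\phi_1(0)^2$ when $m=0$; this does not affect your use at $m=n\ge 2$.)
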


\begin{proof}
We have
$$
c^2=\frac{n+2}2\int_0^1\phi_1^2\,r^{n+1} \, \textnormal{d}r = -\int_0^1\phi_1\, \phi_1'\, r^{n+2}\, \textnormal{d}r
$$
Recall also that 
$$
\phi_1''+\frac{n-1}r\, \phi_1'+\lambda_1\phi_1=0
$$
with $\phi_1(1) = 0$, and $\phi_1$ is normalized by 
$$
(\omega_n)^{-1}=\int_0^1\phi_1^2\, r^{n-1}\,  \textnormal{d}r =-\frac{2}{n}\int_0^1\phi_1\, \phi_1'\, r^{n}\,  \textnormal{d}r
$$
We first compute
\[
\begin{array}{lll}
\displaystyle (r^{n+2}(\phi_1')^2)' & = &\displaystyle (n+2)\,r^{n+1}\,(\phi_1')^2+2r^{n+2}\,\phi_1'\,\phi_1''\\[4mm]
&=&\displaystyle (n+2)\,r^{n+1}\,(\phi_1')^2-2r^{n+2}\,\phi_1'\,\left(\frac{n-1}r\,\phi_1'+\lambda_1\,\phi_1\right)\\[4mm]
&=&\displaystyle (4-n)\,r^{n+1}\,(\phi_1')^2-2\lambda_1\,r^{n+2}\,\phi_1'\,\phi_1\\
\end{array}
\]
Integrating this relation between $0$ and $1$ we find
$$
c_1^2=(4-n)\int_0^1r^{n+1}\,(\phi_1')^2+2\lambda_1 c^2
$$
where $c_1 = \phi_1'(1)$.
We now compute
\[
\begin{array}{lll}
\displaystyle (r^{n+1}\,\phi_1\,\phi_1')' & = & \displaystyle(n+1)\,r^{n}\,\phi_1\,\phi_1'+r^{n+1}\,(\phi_1')^2+r^{n+1}\,\phi_1\,\phi_1''\\[4mm]
&=&\displaystyle(n+1)\,r^{n}\,\phi_1\,\phi_1'+r^{n+1}\,(\phi_1')^2-r^{n+1}\,\phi_1\,\left(\frac{n-1}r\,\phi_1'+\lambda_1\,\phi_1\right)\\[4mm]
&=&\displaystyle 2r^{n}\,\phi_1\,\phi_1'+r^{n+1}\,(\phi_1')^2-\lambda_1\,r^{n+1}\,\phi_1^2\\[4mm]
&=& \displaystyle (r^{n}\,\phi_1^2)'-n\,r^{n-1}\,\phi_1^2+r^{n+1}\,(\phi_1')^2-\lambda_1\,r^{n+1}\,\phi_1^2\\
\end{array}
\]
Integrating this relation between $0$ and $1$ we find
$$
0=-n\,(\omega_n)^{-1}+\int_0^1r^{n+1}\,(\phi_1')^2-\lambda_1\frac2{n+2}c^2 \, .
$$
We thus have at the end
$$
c^2=\frac{n+2}{12\lambda_1}\,\left[c_1^2+\frac{n(n-4)}{\omega_n}\right] \, .
$$
The proof of the Lemma follows at once from Lemma \ref{lemma_c1}.
\end{proof}

\section{Appendix II: The second eigenvalue of the operator $H$}
Here we compute the eigenvalue $\alpha_2$ of the operator $H$ associated to the eigenspace $V_2$. 
When $w$ is an homogeneous polynomial harmonic of degree 2 (abusively identified with its restriction
to the unit sphere) we have $\Delta_{S^{n-1}} w =-\mu_2 w=-2n\, w$ and $H(w)=\alpha_2\, w$.
We recall that $$H(w)=\left.(\partial_r\psi)\right|_{\partial B_1}+c_2\,w=\left.(\partial_r\psi)\right|_{\partial B_1}-(n-1)\,c_1\,w$$
 where $\psi$ is the solution of
 \[
 \left\{
\begin{array}{rcll}
	\displaystyle \Delta \psi + \lambda_{1}Ê\,Ê \psi & = & 0  & \textnormal{in} \qquad B_1 \\[3mm]
	\psi & = &  - \displaystyle  {c_1} \, w  & \textnormal{on}\qquad \partial B_1 \, 
\end{array}
\right.
\]
which is $L^2(B_1)$-orthogonal to $\phi_1$.
Decomposing $\psi$ in spherical harmonics, we see that $\psi(r,\theta)=b_2(r)\, w(\theta)$ where $b_2$ is the solution defined at $0$ of 
$$
\left\{
\begin{array}{lll}
r^2\,b''+(n-1)\,r\,b'+(r^2\,\lambda_1-2n)\,b=0& \mbox{in} &(0,1)\\[2mm]
b(1)=-c_1=-\phi_1'(1)&&
\end{array}
\right.
$$
From the definition of $H$, we see that
$$
\alpha_2=b_2'(1)+\phi_1''(1)=b_2'(1)+c_2=b_2'(1)-(n-1)\,c_1
$$
so we have to compute $b_2'(1)$.
Let us  verify that
$$
b_2(r)=-\left(\frac{\lambda_1}n\,\phi_1+\frac 1r \, \phi_1'\right)
$$
is the desired solution. Recall that
$$
\phi_1''+\frac{n-1}r\,\phi_1'+\lambda_1\phi_1=0,
$$
thus
$$
(\phi_1')''+\frac{n-1}r\,(\phi_1')'+\lambda_1\phi_1'=\frac{n-1}{r^2}\,\phi_1'
$$
Now 
$$b_2'=-\left(\frac{\lambda_1} n\, \phi_1'+\frac 1r\,\phi_1''\right)+\frac{ 1}{r^2}\,\phi'_1$$
and
$$b_2''=-\left(\frac{\lambda_1} n\,\phi_1''+\frac 1r\,\phi_1'''\right)+\frac{2}{r^2}\, \phi''_1-\frac{2}{r^3}\, \phi'_1$$
so 
$$
\begin{array}{lll}
\displaystyle b_2''+\frac{n-1}r\, b_2'+\lambda_1\, b_2&=&\displaystyle -\frac{1}{r}\,\frac{n-1}{r^2}\,\phi_1'+\frac{n-1}r\, \frac{ 1}{r^2}\, \phi'_1+2\, \frac{ 1}{r^2}\, \phi''_1-2\, \frac{1}{r^3}\, \phi'_1\\[4mm]
&=&\displaystyle -2\, \frac{ 1}{r^2}\, \left (\frac{n-1}r\, \phi_1'+\lambda_1\, \phi_1\right)-2\, \frac{1}{r^3}\, \phi'_1\\[4mm]
&=&\displaystyle -\frac{2n}{r^2}\, \left(\frac{\lambda_1}{n}\, \phi_1+\frac 1r\, \phi_1'\right)\\[4mm]
&=&\displaystyle \frac{2n}{r^2}\, b_2
\end{array}
$$
And of course $b_2(1)=-c_1$, so this is the desired solution.
Finally we have
\[
b_2'(1)=\frac{n^2-\lambda_1}n c_1
\]
and 
\begin{equation}\label{alpha2}
\alpha_2=\frac{n-\lambda_1}n c_1=\frac{\lambda_1-n}n\sqrt{\frac{2\lambda_1}{\omega_n}}\geq 0.
\end{equation}

\section{Appendix III : Differentiating with respect to the domain} 

In this Appendix we recall a useful result that allows to derivate the integral of a function with respect to a parameter $t$ that appears in the function and also in the domain of integration. The proof of the such a result can be found in \cite{Henry}, page 14. 

\begin{Proposition}\label{prop_henry}
Let $\Omega$ a smooth bounded domain of $\mathbb{R}^n$ and
\[
h : (-r,r) \times \Omega \to \mathbb{R}^n
\]
a smooth function, where $r$ is a positive constant, such that $h(0, p) = p$ for all $p \in \Omega$. Let 
\[
f  : \mathbb{R} \times \mathbb{R}^n \to \mathbb{R}
\]
a smooth function. Let $\Omega_t = h(t, \Omega_0)$, $V(t, h(t,p)) = \frac{\partial h}{\partial t}(t,p)$ and $N(t,q)$ the unit outward normal at $q \in \partial\Omega_t$. Then
\begin{equation}\label{henry1}
\frac{\partial}{\partial t}  \int_{\Omega_t} f \,  =  \int_{\Omega_t} \frac{\partial f}{\partial t}\, \mbox{d}x  +  \int_{\partial \Omega_t} f \, \langle V, N \rangle\, \mbox{d}s
\end{equation}
and 
\begin{equation}\label{henry2}
\frac{\partial}{\partial t}  \int_{\partial \Omega_t} f \, \mbox{d}s  =  \int_{\partial \Omega_t} \left(\frac{\partial f}{\partial t} + \langle V, N \rangle\, \langle \nabla_x f, N \rangle + H\, \langle V, N \rangle\,f\right)  \mbox{d}s 
\end{equation}
where $\langle \cdot, \cdot \rangle$ denote the scalar product in $\mathbb{R}^n$, $s$ denote the area element of $\partial \Omega_t$ and $H$ is the mean curvature of $\partial \Omega_t$.
\end{Proposition}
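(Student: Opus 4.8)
\emph{Proof plan.} The plan is to derive both identities by the classical Hadamard/Reynolds transport argument: reduce each integral to one over the \emph{fixed} domain $\Omega_0$ via the diffeomorphism $h(t,\cdot)$, differentiate under the integral sign, and then convert the resulting bulk term back into a boundary term by the divergence theorem.

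For \eqref{henry1} I would change variables $x=h(t,p)$, using that $h(t,\cdot)\colon\Omega_0\to\Omega_t$ is a diffeomorphism:
\[
\int_{\Omega_t} f(t,x)\,\mathrm dx=\int_{\Omega_0} f\big(t,h(t,p)\big)\,J(t,p)\,\mathrm dp,\qquad J(t,p):=\det D_ph(t,p)>0.
\]
Since $\Omega_0$ no longer depends on $t$, one may differentiate under the integral. The chain rule and the very definition of $V$ give $\partial_t\big[f(t,h(t,p))\big]=\big(\partial_tf+\langle\nabla_xf,V\rangle\big)(t,h(t,p))$, while Jacobi's formula together with $D_p(\partial_th)=(D_xV)(h)\,D_ph$ gives $\partial_tJ=J\,\operatorname{tr}\big((D_ph)^{-1}(D_xV)(h)\,D_ph\big)=J\,(\operatorname{div}_xV)(t,h(t,p))$. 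Substituting and changing variables back,
\[
\frac{\partial}{\partial t}\int_{\Omega_t}f=\int_{\Omega_t}\big(\partial_tf+\langle\nabla_xf,V\rangle+f\operatorname{div}_xV\big)\,\mathrm dx=\int_{\Omega_t}\big(\partial_tf+\operatorname{div}_x(fV)\big)\,\mathrm dx,
\]
and the divergence theorem turns the second summand into $\int_{\partial\Omega_t}f\,\langle V,N\rangle\,\mathrm ds$, which is \eqref{henry1}.

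For \eqref{henry2} I would run the same scheme on $\partial\Omega_t=h(t,\partial\Omega_0)$. Writing the induced surface measure as $\mathrm ds_t=J_\partial(t,p)\,\mathrm ds_0$ with $J_\partial$ the tangential Jacobian of $h(t,\cdot)|_{\partial\Omega_0}$ and differentiating under the now fixed integral over $\partial\Omega_0$,
\[
\frac{\partial}{\partial t}\int_{\partial\Omega_t}f\,\mathrm ds=\int_{\partial\Omega_t}\Big(\partial_tf+\langle\nabla_xf,V\rangle+f\,\tfrac{\partial_tJ_\partial}{J_\partial}\Big)\,\mathrm ds.
\]
The step I expect to be the main obstacle is the first variation of area identity $\partial_tJ_\partial/J_\partial=\operatorname{div}_{\partial\Omega_t}V$, with $\operatorname{div}_{\partial\Omega_t}$ the tangential divergence; it follows by differentiating the pulled-back induced metric and tracing, and it is here that one must fix the sign convention for $H$. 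Granting it, I would split $V=V^{\top}+\langle V,N\rangle N$, so that $\langle\nabla_xf,V\rangle=\langle\nabla_{\partial\Omega_t}f,V^{\top}\rangle+\langle V,N\rangle\langle\nabla_xf,N\rangle$ and $\operatorname{div}_{\partial\Omega_t}V=\operatorname{div}_{\partial\Omega_t}V^{\top}+\langle V,N\rangle\operatorname{div}_{\partial\Omega_t}N=\operatorname{div}_{\partial\Omega_t}V^{\top}+H\langle V,N\rangle$. The purely tangential terms assemble into $\int_{\partial\Omega_t}\operatorname{div}_{\partial\Omega_t}(fV^{\top})\,\mathrm ds$, which vanishes because $\partial\Omega_t$ is a closed hypersurface and $fV^{\top}$ is tangent to it (Stokes on a closed manifold). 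What remains is exactly
\[
\frac{\partial}{\partial t}\int_{\partial\Omega_t}f\,\mathrm ds=\int_{\partial\Omega_t}\big(\partial_tf+\langle\nabla_xf,N\rangle\langle V,N\rangle+H\,f\,\langle V,N\rangle\big)\,\mathrm ds,
\]
which is \eqref{henry2}.

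The only ingredient beyond elementary calculus is the area-variation formula $\partial_tJ_\partial=J_\partial\operatorname{div}_{\partial\Omega_t}V$; one could bypass it by realising $\int_{\partial\Omega_t}f\,\mathrm ds$ as the $\delta$-derivative at $0$ of an integral over $\Omega_t$ of a collar cutoff of $f$ and invoking \eqref{henry1}, but that trades the single delicate step for heavier bookkeeping, so I would carry out the direct argument above.
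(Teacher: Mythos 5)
Your proof is correct. The paper does not prove this Proposition itself---it refers the reader to Henry's monograph \cite{Henry}, page 14---and the Hadamard/Reynolds transport argument you give (pull back to the fixed domain via $h(t,\cdot)$, differentiate under the integral using Jacobi's formula for the bulk case and the first variation of the area element for the boundary case, then convert the resulting divergence term to a boundary integral) is the standard derivation of these two identities.
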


\end{document}